\newtheorem{theorem}{Theorem}[section]
\newtheorem{corollary}[theorem]{Corollary}
\newtheorem{lemma}[theorem]{Lemma}
\newtheorem{proposition}[theorem]{Proposition}
\newtheorem{remark}[theorem]{Remark}
\newcommand{\imod}[1]{\allowbreak\mkern4mu({\operator@font mod}\,\,#1)}
\newtheorem*{thmA}{Theorem~A}
\newtheorem*{thmB}{Theorem~B}
\newtheorem*{thmD}{Theorem~D}
\newtheorem*{thmC}{Theorem~C}
\newtheorem*{thmE}{Theorem~E}
\newcommand{\Irr}{{\mathrm {Irr}}}
\newcommand{\cd}{{\mathrm {cd}}}
\newcommand{\cod}{{\mathrm {cod}}}
\newcommand{\cdc}{{\mathrm {cdc}}}
\newcommand{\dl}{{\mathrm {dl}}}
\newcommand{\cv}{{\mathrm {cv}}}
\newcommand{\ncv}{{\mathrm {ncv}}}
\newcommand{\Aut}{{\mathrm {Aut}}}
\newcommand{\PSL}{{\mathrm {PSL}}}
\newcommand{\PSU}{{\mathrm {PSU}}}
\newcommand{\Atlas}{{\sf Atlas}}
\newcommand{\GAP}{{\sf GAP}}
\theoremstyle{definition}
\newtheorem{problem}[theorem]{Problem}
\begin{document}
\title[\textbf{Finite groups with few character values}]{\textbf{Finite groups with few character values that are not character degrees}}

\author[S. Y. Madanha]{Sesuai Y. Madanha}
\address{Department of Mathematics and Applied Mathematics, University of Pretoria, Private Bag X20, Hatfield, Pretoria 0028, South Africa}
\email{sesuai.madanha@up.ac.za}

\author[X. Mbaale]{Xavier Mbaale}
\address{Department of Mathematics and Statistics, University of Zambia, Lusaka, Zambia}
\email{xavier@aims.ac.za}

\author[T. M. Mudziiri Shumba]{Tendai M. Mudziiri Shumba}
\address{Sobolev Institute of Mathematics, Novosibirsk, Russia}
\email{tendshumba@gmail.com}

\subjclass[2010]{Primary 20C15}

\date{\today}

\keywords{character values, character degrees, derived length, solvable groups, non-solvable groups}

\begin{abstract}
Let $ G $ be a finite group and $ \chi \in \Irr(G) $. Define $ \cv(G)=\{\chi(g)\mid \chi \in \Irr(G), g\in G \} $, $ \cv(\chi)=\{\chi(g)\mid g\in G \} $ and denote $ \dl(G) $ by the derived length of $ G $. In the 1990s Berkovich, Chillag and Zhmud described groups $ G $ in which $ |\cv(\chi)|=3 $ for every non-linear $ \chi \in \Irr(G) $ and their results show that $ G $ is solvable. They also considered groups in which $ |\cv(\chi)|=4 $ for some non-linear $ \chi \in \Irr(G) $. Continuing with their work, in this article, we prove that if $ |\cv(\chi)|\leqslant 4 $ for every non-linear $ \chi \in \Irr(G) $, then $ G $ is solvable. We also considered groups $ G $ such that $ |\cv(G)\setminus \cd(G)|=2 $. T. Sakurai classified these groups in the case  when $ |\cd(G)|=2 $. We show that $ G $ is solvable and we classify groups $ G $ when $ |\cd(G)|\leqslant 4 $ or $ \dl(G)\leqslant 3 $. It is interesting to note that these groups are such that $ |\cv(\chi)|\leqslant 4 $ for all $ \chi \in \Irr(G) $. Lastly, we consider finite groups $ G $ with $ |\cv(G)\setminus \cd(G)|=3 $. For nilpotent groups, we obtain a characterization which is also connected to the work of Berkovich, Chillag and Zhmud. For non-nilpotent groups, we obtain the structure of $ G $ when $ \dl(G)=2 $.
\end{abstract}

\maketitle

%%%%%%%%%%%%%%%%%%%%%%%%%%%%%%%%%%%%%%%%%%%%%%%%%
%----------------------------------------------------------------------------

\section{Introduction}\label{s:intro}

%Determining the structure of a finite group using some information in the ordinary character table is one of the interesting problems in character theory. 
Many results have been obtained using the character degrees set of a finite group. Recently, restricting the study to the real valued or rational valued characters of finite groups to determine the structure of the group has received a lot of attention. This has proved that character values in a character table encode a great deal of information about the group structure of a finite group. In this article, our first result is based on a problem first considered by Berkovich, Chillag and Zhmud in the 1990s in \cite{BCZ95}. 

Let $ G $ be a finite group and $ \Irr(G) $ be the set of complex irreducible characters of $ G $. Recall the definition of a character degrees set and that of character values set of $ G $:

\begin{center}
$ \cd(G):=\{\chi(1)\mid \chi \in \Irr(G) \} $,
$ \cv(G)=\{\chi(g)\mid \chi \in \Irr(G), g\in G \} $,
\end{center} respectively. If $ \chi \in \Irr(G) $, then denote $ \cv(\chi)=\{\chi(g)\mid g\in G \} $. It follows that $ \cv(\chi)\subseteq \cv(G) $ for all $ \chi \in \Irr(G) $. Note also that for every non-linear $ \chi \in \Irr(G) $, $ |\cv(\chi)|\geqslant 3 $.

In \cite{BCZ95}, the authors studied groups $ G $ with the following extremal property: $ |\cv(\chi)|=3 $ for every non-linear $ \chi \in \Irr(G) $.  Their results show that $ G $ is a $ 2 $-group or some solvable Frobenius group \cite[Main Theorem]{BCZ95}. The authors also considered groups in which $ |\cv(\chi)|=4 $ for some non-linear $ \chi \in \Irr(G) $. Our first result is a continuation of their investigation and we show that if $ |\cv(\chi)|\leqslant 4 $ for every non-linear $ \chi \in \Irr(G) $, then $ G $ is solvable.

\begin{thmA}
Let $ G $ be a finite group. If $ |\cv(\chi)|\leqslant 4 $ for all non-linear $ \chi \in \Irr(G) $, then $ G $ is solvable.
\end{thmA}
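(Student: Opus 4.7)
The plan is to argue by contradiction: assume $G$ is a counterexample of minimal order and derive structural information until the hypothesis is visibly violated. First I would observe that the hypothesis is inherited by quotients, since every $\chi \in \Irr(G/N)$ is an irreducible character of $G$ with the same values. Consequently, every proper nontrivial quotient of $G$ is solvable. This forces $G$ to have a \emph{unique} minimal normal subgroup $N$ (otherwise $G$ embeds in a direct product of solvable quotients), and $N$ must be non-abelian (an abelian minimal normal subgroup, together with solvable $G/N$, would make $G$ solvable). Standard facts then give $N = S_1\times\cdots\times S_k$ with $S_i\cong S$ for some non-abelian simple group $S$, while $G/N$ is solvable, $C_G(N)=1$, and $G$ embeds in $\Aut(N)=\Aut(S)\wr \Sym_k$, acting transitively on the set of simple factors.

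Next I would reduce the problem to producing a single non-linear $\chi\in\Irr(G)$ with $|\cv(\chi)|\geqslant 5$, which suffices to contradict the hypothesis. The natural source of such a $\chi$ is Clifford theory applied to a well-chosen $\theta\in\Irr(N)$: if $\theta$ extends to $\chi\in\Irr(G)$, then $\chi|_N=\theta$ and hence $\cv(\chi)\supseteq \cv(\theta)$, so it is enough that $\cv(\theta)$ already contains five distinct values. When $k\geqslant 2$, a tensor product $\theta_1\otimes\cdots\otimes\theta_k$ of characters of $S$ is a natural candidate: the product structure multiplies out many values of the factors, and by taking all $\theta_i$ equal to a fixed $\theta_0\in\Irr(S)$ invariant under the diagonal action we can arrange $G$-invariance. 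When $k=1$, $G$ is almost simple with socle $S$ and $G/S \leqslant \Out(S)$; choosing $\theta_0\in\Irr(S)$ which is $G$-invariant and extends to $G$ (invoking $H^2$-vanishing when $G/S$ is cyclic, or a direct character-table check) gives the desired $\chi$.

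The crucial input is the following CFSG-type statement: \emph{every non-abelian finite simple group $S$ possesses a non-linear irreducible character $\theta_0$ with $|\cv(\theta_0)|\geqslant 5$, which can be chosen $\Aut(S)$-invariant and extendible to $\Aut(S)$.} I would verify this case by case: for $\Alt_n$ with $n\geqslant 5$, a character of small degree such as the one of degree $3$ for $\Alt_5$ (which already takes the five values $3,-1,0,(1\pm\sqrt{5})/2$) and a Specht-module-based character for larger $n$ work; for simple groups of Lie type one exhibits a deep unipotent or semisimple character whose values are parameterized by conjugacy class types and clearly exceed four; for the twenty-six sporadic groups the claim is immediate from the character tables in the \Atlas.

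The main obstacle I anticipate is this CFSG verification, together with the cohomological bookkeeping required to guarantee that the chosen $\theta_0$ really extends to the given $G$ rather than merely to $\Aut(S)$. In the non-extending case one must fall back on the induced character $\chi=\theta^G$, where the values $\chi(s)=\sum_{g\in G/I_G(\theta)}\theta(s^{g})$ are sums of values of $\theta$ and need a more delicate argument—typically by exhibiting distinct semisimple classes of $S$ on which the sums are provably different—to still force $|\cv(\chi)|\geqslant 5$. Assuming these technical points are settled, the resulting $\chi$ contradicts the hypothesis $|\cv(\chi)|\leqslant 4$ on the minimal counterexample, completing the proof.
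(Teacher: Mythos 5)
There is a genuine gap: the ``crucial CFSG-type statement'' on which your whole argument rests is false. For $S=A_{5}$ the only non-linear irreducible characters fixed by $\Aut(S)=S_{5}$ are those of degrees $4$ and $5$, with value sets $\{4,1,0,-1\}$ and $\{5,1,0,-1\}$ --- exactly four values each; the degree-$3$ characters you cite (which do take five values) are interchanged by the outer automorphism, so they are neither $\Aut(S)$-invariant nor extendible to $S_{5}$. The same failure occurs for $A_{6}$ (its invariant non-linear characters, of degrees $9$ and $10$, each take four values) and for $\PSL_{2}(7)$, among others. So in the almost simple case your strategy of producing a single invariant, extendible $\theta_{0}$ with five values cannot get off the ground, and the fallback you mention (inducing a non-invariant $\theta$) is exactly where the values collapse into sums and you lose control. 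Your opening reduction to a unique non-abelian minimal normal subgroup, and the tensor-product trick for $k\geqslant 2$, do match the paper.

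The paper escapes this trap by a different dichotomy once $G$ is almost simple. If some non-linear character of $G$ is irrational, then \cite[Proposition 7]{BCZ95} forces the minimal normal subgroup $N$ to be a union of three $G$-conjugacy classes, and the Shahryari--Shahabi classification (Lemma \ref{3conjugacyclasses}) makes $N$ solvable, a contradiction. If instead every non-linear character is rational-valued, $G$ is a rational group, and the Feit--Seitz theorem (Theorem \ref{rationalnonsolvable}) restricts the socle to $\mathrm{Sp}_{6}(2)$, $\mathrm{O}_{8}^{+}(2)'$ and alternating groups; these are finished off by \Atlas{}/\GAP{} checks for small cases and, for $S_{n}$ with $n\geqslant 15$, by the explicit character $\chi_{(n-2,1,1)}$ of Proposition \ref{Alternating4charactervalues}. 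To repair your proof you would need to replace the false uniform claim with some such rationality-versus-irrationality argument rather than a case-by-case search for five-valued invariant characters.
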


Our proof uses the classification of finite simple groups, the existence of extendible rational valued characters in finite simple groups and the fact that non-solvable groups all of whose non-linear irreducible characters are rational valued are rational groups. At the end of Section 5, we give some possibilities and some examples of groups that satisfy the hypothesis of Theorem A. We also note that $ A_{5} $ has a character with five character values and so the condition in Theorem A is best possible.

For the rest of the article, we shall study a subset of $ \cv(G) $, the complement of $ \cd(G) $. Define $ \mathrm{cdc}(G):=\cv(G)\setminus \cd(G) $. Our objective is to study how the structure of $ G $ is affected by  $ |\cdc(G)| $. It is easy to see that $ \cdc(G) =\emptyset $ if and only if $ G=1 $ and that $ |\cdc(G)|=1 $ if and only if $G \cong C_{2}^{*} $, where $ C_{n}^{*} $ denotes a direct product of cyclic groups of order $ n $. It is also easy to see that if $ G $ is non-abelian, then $ |\cdc(G)|\geqslant 2 $. What if $ |\cdc(G)|=2 $? A moment's thought would have us conclude that $ G $ is a rational group. Rational groups have been studied extensively(see \cite{Row} and \cite{DGD24} and their references for more details). In \cite{Sak21}, Sakurai considered this problem in the special case when $ |\cd(G)|=2 $. He showed that $ |\cd(G)|=2 $ if and only if $ G\cong C_{3}^{*}\rtimes C_{2} $, a Frobenius group with an elementary abelian $ 3 $-group kernel and whose complement is of order two. We show that in the general case, $ G $ is solvable. In fact, our result shows that when $ |\cdc(G)|=3 $, with a few possible exceptions, $ G $ is solvable:

\begin{thmB}
Let $ G $ be a finite group whose composition factors are not isomorphic to either $ A_{5} $ or $ A_{6} $. If $ |\cdc(G)|\leqslant 3 $, then $ G $ is solvable.
\end{thmB}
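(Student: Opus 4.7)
The plan is to argue by contradiction via a minimum-order counterexample. Suppose $G$ is a non-solvable finite group with no composition factor isomorphic to $A_{5}$ or $A_{6}$, satisfying $|\cdc(G)|\leqslant 3$, and with $|G|$ minimal among such groups. Two elementary observations form the backbone of what follows: (i) for any $N\lhd G$, inflation gives $\cv(G/N)\subseteq \cv(G)$ and $\cd(G/N)\subseteq \cd(G)$, so every element of $\cv(G/N)$ which is not a positive integer---namely $0$, any negative integer, and any non-rational algebraic number---automatically lies in $\cdc(G)$; and (ii) by Burnside's vanishing theorem, $0\in \cdc(G)$ whenever $G$ is non-abelian. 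Together, these let me transfer ``non-degree'' character values from any quotient of $G$ into $\cdc(G)$ itself.

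Using (i) and (ii), I would first reduce to the case that $G$ has a unique minimal normal subgroup $N$ and that $N$ is non-abelian. The key point is that if the minimal normal subgroup were elementary abelian, then $G/N$ would still be non-solvable with the same non-$A_{5}$, non-$A_{6}$ composition factors, and the minimality of $G$ combined with the inflation observation---together with a Clifford-theoretic analysis of characters of $G$ lying above non-trivial characters of $N$---would yield the contradiction. Thus $N=S^{k}$ for some non-abelian simple $S$, and the standing hypothesis forces $S\notin\{A_{5},A_{6}\}$.

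At this stage the classification of finite simple groups is invoked. For each non-abelian simple group $S\notin\{A_{5},A_{6}\}$, one verifies that $S$ has at least three irreducible characters whose values supply three mutually distinct elements of $\cv(S)\setminus(\mathbb{Z}_{>0}\cup\{0\})$. For the alternating groups $A_{n}$ with $n\geqslant 7$ and for the sporadic groups this is a direct check of character tables. For the groups of Lie type, generic character-value formulas---complemented by explicit case-by-case treatment of the small-rank families $\PSL_{2}(q)$, $\PSL_{3}(q)$, $\PSU_{3}(q)$, and the Suzuki and Ree groups---produce the required irrational or negative values. Clifford theory then propagates these values from $S$ up to $G$: an irreducible character of $G$ lying above a non-trivial character of $N=S^{k}$ retains, after restriction to $N$ and evaluation at suitable elements, the irrational or negative contributions coming from characters of $S$. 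Together with $0\in \cdc(G)$, this yields at least four distinct elements of $\cdc(G)$, contradicting $|\cdc(G)|\leqslant 3$.

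The main obstacle is the CFSG step: one must show uniformly that no non-abelian simple group beyond $A_{5}$ and $A_{6}$ is as ``value-poor'' as they are. The Lie type families are the most delicate, because the generic arguments must be complemented by explicit character-table computations in small rank to rule out sporadic coincidences; once the requisite non-degree values in each $S$ have been identified, the Clifford-theoretic transfer to $G$ is essentially mechanical. The exceptional status of $A_{5}$ and $A_{6}$ in the statement reflects the fact that their character tables accommodate just barely enough non-positive-integer values that the above transfer cannot be pushed through without additional care, and the heart of the proof is the uniform case-by-case verification that this exceptional behaviour does not persist for any other non-abelian finite simple group.
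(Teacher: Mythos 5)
Your overall architecture (minimal counterexample, reduction to a unique non-abelian minimal normal subgroup $N=S^{k}$, use of $0\in\cdc(G)$ and of values that are not positive integers so that they survive inflation) matches the paper, which formalises your observation (i) by working throughout with $\ncv(G)\subseteq\cdc(G)$. But there is a genuine gap at the heart of your argument: the claim that ``Clifford theory then propagates these values from $S$ up to $G$'' and that this transfer is ``essentially mechanical'' is false. For $n\in N$ and $\chi\in\Irr(G)$ lying over $\theta\in\Irr(N)$, the value $\chi(n)$ is $e\sum_i\theta^{g_i}(n)$, a multiple of a sum over the $G$-orbit of $\theta$; the individual values of $\theta$ need not occur as values of any irreducible character of $G$. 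A concrete failure: $A_{5}$ itself has the four non-positive-integer values $0,-1,(1\pm\sqrt5)/2$, yet in $S_{5}$ the two degree-$3$ characters fuse into the rational degree-$6$ character and the irrational values disappear entirely ($|\ncv(S_{5})|=3$, which is exactly why $S_{5}$ appears as an exceptional case in the paper's Theorem \ref{uniqueminimalnormal}). So counting ``bad'' values inside each simple group $S$, however carefully you run the CFSG check over the Lie type families, does not bound $|\cdc(G)|$ from below.

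The paper's proof supplies precisely the missing ingredient in two ways that your proposal does not. First, it invokes the (nontrivial, CFSG-based) result of Hung, Schaeffer Fry, Tong-Viet and Vinroot (Lemma \ref{rationalextension}) producing $\theta\in\Irr(S)$ that extends to a \emph{rational-valued} character of $\Aut(S)$; this yields $\chi\in\Irr(G)$ with $\chi_{N}=\theta_{1}\times\cdots\times\theta_{k}$ irreducible, so the values of $\chi$ on $N$ really are products of values of $\theta$ and can be counted. Second, it splits on whether $G$ is a rational group: if not, Galois conjugation (Lemma \ref{lem}(f)) manufactures extra irrational values; if so, the Feit--Seitz theorem (Theorem \ref{rationalnonsolvable}) reduces the possible $S$ to a short explicit list plus the alternating groups, which are then handled by \Atlas{}/\GAP{} and the explicit $(n-2,1,1)$ character for $n\geqslant 15$. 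You would also need to run your induction on the hypothesis $|\ncv(G)|\leqslant 3$ rather than $|\cdc(G)|\leqslant 3$, since $\cdc(G/N)\subseteq\cdc(G)$ is not known (it is posed as an open problem in the paper); your observation (i) contains the right containment, but the minimal-counterexample step as you state it appeals to the weaker hypothesis in the quotient.
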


We believe that the condition on composition factors is not necessary. In order to prove the above theorem, we work with a subset of $ \cdc(G) $ that allows us to have an inductive proof. In particular, we focus on elements $ a $ of $ \cdc(G) $ such that $ a\notin \mathbb{N} $. The drawback of this strategy is for example: For any normal subgroup $ N $ of $ G $, $ 2\in \cdc(G/N) $ if $ G/N\cong A_{5} $ but it is not clear if $ 2\in \cdc(G) $. This necessitates the hypothesis on $ A_{5} $ and $ A_{6} $. Note that non-solvable groups with $ |\cdc(G)|=4 $ and $ |\cd(G)|=4 $ were classified in \cite{Mad22}.

As an extension of \cite[Theorem]{Sak21}, we consider finite groups in which $ |\cd(G)|\leqslant 4 $ or $ \dl(G)\leqslant3 $ in the following result:

\begin{thmC}
Let $ G $ be a finite group and suppose that either $ |\cd(G)|\leqslant 4 $ or $ \dl(G)\leqslant3 $. Then $ |\cdc(G)|=2 $ if and only if one of the following holds:
\begin{itemize}
\item[(a)] $ \cd(G)=\{1,2\} $ and $ G $ is a Frobenius group with a Frobenius kernel which is an elementary abelian $ 3 $-group and whose complement is of order $ 2 $.
\item[(b)] $ |\cd(G)|=\{1, 2, 3\} $ and $ G\cong S_{4} $.
\end{itemize}
\end{thmC}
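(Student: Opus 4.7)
The $(\Leftarrow)$ direction is a direct computation. In case (a), writing $G=N\rtimes\la t\ra$ with $N$ elementary abelian of order $3^{n}$ and $|t|=2$, the non-linear irreducible characters are all induced from non-trivial linear characters of $N$, have degree $2$, and take values in $\{2,-1,0\}$; combined with $\cd(G)=\{1,2\}$ this gives $\cdc(G)=\{-1,0\}$. In case (b), the character table of $S_{4}$ yields $\cv(S_{4})=\{-1,0,1,2,3\}$ and $\cd(S_{4})=\{1,2,3\}$, so again $\cdc(S_{4})=\{-1,0\}$.

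For $(\Rightarrow)$, assume $|\cdc(G)|=2$. Since $\cv(G)=\cd(G)\cup\cdc(G)$ is stable under $\mathrm{Gal}(\overline{\mathbb{Q}}/\mathbb{Q})$ and $\cd(G)\subseteq\mathbb{Z}_{\geq 0}$, the two elements of $\cdc(G)$ are Galois-fixed algebraic integers, hence integers; in particular, $G$ is a rational group. If $\dl(G)\leq 3$, solvability is automatic. If instead $|\cd(G)|\leq 4$, I apply Theorem~B and dispose of the remaining composition factors $A_{5}$ and $A_{6}$ as follows: pushing to a suitable section, one obtains a rational almost-simple quotient with socle $A_{5}$ or $A_{6}$ (e.g. $S_{5}$, $S_{6}$, or $\Aut(A_{6})$), and for each the lifted characters already contribute the non-positive integer values of that quotient to $\cdc(G)$; a character-table check shows this forces $|\cdc(G)|\geq 3$, a contradiction.

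It remains to classify solvable rational groups $G$ with $|\cdc(G)|=2$ under either hypothesis, and this I do by splitting on $|\cd(G)|$. When $|\cd(G)|=2$, Sakurai's theorem \cite{Sak21} gives exactly the Frobenius group of case (a). When $|\cd(G)|=3$, the plan is to force $\cd(G)=\{1,2,3\}$ and then identify $G\cong S_{4}$: the combination of rationality, $\cdc(G)\subseteq\{-1,0\}$, and the three-degree structure sharply restricts the action on a minimal normal subgroup $N$ via Clifford theory, and a short enumeration pins down $S_{4}$ as the unique possibility. When $|\cd(G)|=4$, I aim to derive a contradiction by a Clifford analysis over a minimal normal elementary abelian $p$-subgroup $N$: distinct non-linear degrees correspond to distinct non-trivial $G$-orbits of $\Irr(N)$, and these yield distinct non-zero values on the nontrivial $G$-classes of $N$, forcing $|\cdc(G)|\geq 3$. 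Under the alternative hypothesis $\dl(G)\leq 3$, Taketa-type bounds relating $\dl(G)$ and $|\cd(G)|$ for rational solvable groups reduce to the same casework on $|\cd(G)|\in\{2,3,4\}$.

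The principal obstacle is the sharp exclusion of $|\cd(G)|=4$ (and of the $\dl(G)\leq 3$ variants with four or more degrees). This requires converting the rationality hypothesis into usable constraints on conjugacy classes, namely that every element is $G$-conjugate to each of its coprime powers, and combining these with a delicate Clifford analysis over $F(G)$ to show that any fourth degree must produce a genuinely new integer in $\cdc(G)$. A secondary difficulty is ensuring that the $|\cd(G)|=3$ case indeed collapses to $S_{4}$ and does not admit other sporadic rational candidates.
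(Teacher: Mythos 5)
Your reduction to rational groups (via Galois-stability of $\cv(G)$ plus $0\in\cdc(G)$) and your treatment of the $|\cd(G)|=2$ case via Sakurai are correct and match the paper's skeleton, but the two cases that carry essentially all of the difficulty are either left as plans or rest on steps that would fail. The most serious is your exclusion of $|\cd(G)|=4$: it is not true that distinct non-linear degrees correspond to distinct non-trivial $G$-orbits of $\Irr(N)$ for a minimal normal elementary abelian $N$ (characters may contain $N$ in their kernel, and several degrees can lie over one orbit via the projective representations of the inertia quotient), and even when they do, the values $\chi(n)$ for $n\in N$ are sums of roots of unity with no reason to be pairwise distinct, non-zero, or outside $\{-1\}\cup\cd(G)$ --- so no "genuinely new integer in $\cdc(G)$" is forced. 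The paper's route is entirely different: Isaacs--Knutson gives $\dl(G)\leqslant 4$ when $|\cd(G)|=4$, the $\dl\leqslant 3$ cases reduce the problem to $\dl(G)=4$ with $G/G'''\cong S_4$, and then Riedl's Fitting-height theorem together with the Du--Lewis classification of solvable groups with four degrees and derived length four yields the contradiction. You acknowledge this case as the "principal obstacle," which is accurate; as written it is a gap, not a proof.

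Two further points. First, your claim that "Taketa-type bounds" reduce the hypothesis $\dl(G)\leqslant 3$ to $|\cd(G)|\in\{2,3,4\}$ is backwards: Taketa bounds $\dl(G)$ by $|\cd(G)|$, not conversely, and metabelian groups can have arbitrarily many character degrees. The paper must (and does) treat $\dl(G)=2$ and $\dl(G)=3$ directly; the $\dl(G)=2$ case alone is a substantial argument that identifies root-of-unity elements in suitable quotients and invokes the Lewis--Morotti--Tong-Viet classification of groups possessing such an element to force $S_3$-quotients and ultimately the Frobenius structure. Second, the $|\cd(G)|=3$ case is only announced ("a short enumeration pins down $S_4$"); the paper's proof requires Thompson's normal $p$-complement theorem, It\^{o}'s theorem, the Isaacs--Moret\'o--Navarro--Tiep result on groups with one degree divisible by a given prime, and a Frobenius analysis of $G'$, so this is not a routine check. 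A minor note: the paper obtains solvability from $|\cv(G)|\leqslant 6<8$ and the first author's earlier result rather than from Theorem B; your route through Theorem B would additionally have to rule out the $A_6\times A_6$ configuration that Theorem B itself cannot exclude (this can be done under $|\cd(G)|\leqslant 4$, but you would need to say how).
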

 
%The corollary below follows immediately from Theorem C. 
%\begin{corD}
%Let $ G $ be a finite group. Suppose that $ |\cdc(G)|=2 $. Then the following holds:
%\begin{itemize}
%\item[(a)] $ |\cd(G)|= 3 $ if and only if $ G\cong S_{4} $.
%\item[(b)] $ \dl(G)=2 $ if and only if $ G $ is a Frobenius group with a Frobenius kernel which is an elementary abelian $ 3 $-group and complement of order $ 2 $.
%\end{itemize}
%\end{corD}
The groups considered in Theorem C are necessarily solvable from the hypothesis since in the first case $ |\cv(G)| = 6 < 8 $(see \cite[Theorem 1.1]{Mad22}) and the second case is obvious. We also note that these groups are such that $ |\cv(\chi)|\leqslant 4 $ for all $ \chi \in \Irr(G) $ and hence are connected to groups in Theorem A. Based on calculations in \GAP{} \cite{GAP}, we think groups in Theorem C are the only finite groups such that $ |\cdc(G)|=2 $. However, we cannot show this at this point. We consider a counterexample in the last section. In particular, we show that the group necessarily has an element of order $ 6 $ and a character degree divisible by $ 6 $.

In our next result, we characterise nilpotent groups such that $ |\cdc(G)|=3 $. Note that the groups described in Theorem D are some of the groups studied by Berkovich, Chillag and Zhmud in \cite[Corollary]{BCZ95}. The result is easy to prove and we put it here because of its interesting connection to the work of Berkovich, Chillag and Zhmud.  We recall the definition of a codegree of an irreducible character: If $ \chi \in \Irr(G)$, then $ \cod(\chi)=|G{:}\ker \chi|/\chi(1) $.

\begin{thmD}
Let $ G $ be a finite nilpotent group. Then the following statements are equivalent:
\begin{itemize}
\item[(a)] $ |\cdc(G)|=3 $;
\item[(b)] $ |\cd(G)|=2 $ and $ |\cv(\chi)|\leqslant 3 $ for all $ \chi \in \Irr(G) $;
\item[(c)] $ |\cd(G)|=2 $, $ G $ is a $ 2 $-group with $ \cod(\chi)=2\chi(1) $ for all non-linear $ \chi \in \Irr(G) $.
\end{itemize}
If one of the above statements hold, then there exists a normal subgroup $ N $ such that $ G/N $ is an extraspecial $ 2 $-group.
\end{thmD}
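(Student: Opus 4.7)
The plan is to establish the cyclic chain of implications (a) $\Rightarrow$ (b) $\Rightarrow$ (c) $\Rightarrow$ (a), exploiting that for a nilpotent group $G=P_1\times\cdots\times P_k$ (with the $P_i$ the Sylow subgroups) every $\chi\in\Irr(G)$ splits as a tensor product $\chi_1\otimes\cdots\otimes\chi_k$ with $\chi_i\in\Irr(P_i)$, so that degrees and values factor multiplicatively across the Sylows. The final structural claim about an extraspecial quotient $G/N$ will fall out of (c) $\Rightarrow$ (a) by taking $N=\ker\chi$ for a non-linear $\chi$.

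For (a) $\Rightarrow$ (b) I would first force $G$ to be a $2$-group: if two distinct primes $p<q$ occurred in $|G|$ with non-trivial Sylow subgroups, non-trivial linear characters of these Sylows would contribute primitive $p$-th and $q$-th roots of unity, and their tensor products with a non-linear character would force strictly more than three elements into $\cdc(G)$. Each distinct non-trivial $d\in\cd(G)$ then contributes $-d$ to $\cdc(G)$ via the value of a non-linear character on the central involution of $G/\ker\chi$, giving $|\cd(G)|=2$; Burnside's vanishing theorem together with $|\cdc(G)|=3$ then caps $|\cv(\chi)|$ at $3$ for every $\chi$. For (b) $\Rightarrow$ (c), bounding $|\cv(\chi)|\leqslant 3$ on linear $\chi$ forces every linear character to have order at most $3$, and combining this with nilpotency rules out the prime $3$ (a product with a non-linear value would create too many distinct values), so $G$ is a $2$-group with $G^{ab}$ of exponent $2$. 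The only triple compatible with $|\cv(\chi)|=3$ for a non-linear $\chi$ of degree $d$ is $\cv(\chi)=\{0,d,-d\}$; then the norm identity $\sum_{g\in G}|\chi(g)|^2=|G|$, with $\chi$ non-zero only on $\ker\chi$ and its coset under the central involution, yields $|G:\ker\chi|=2d^2$, i.e.\ $\cod(\chi)=2\chi(1)$.

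For (c) $\Rightarrow$ (a), fix any non-linear $\chi$: $H:=G/\ker\chi$ is a $2$-group of order $2d^2$ carrying a faithful irreducible character of degree $d$. Schur's inequality $\chi(1)^2\leqslant[H:Z(H)]$ together with $|H|=2d^2$ and the non-triviality of the centre of a $p$-group forces $|Z(H)|=2$, and equality in Schur's bound makes $\chi$ vanish off $Z(H)$. Decomposing $\chi\bar\chi=\mathrm{Ind}_{Z(H)}^H(1_{Z(H)})$ then gives that $H/Z(H)$ is elementary abelian, so $H$ is extraspecial, which delivers both $\cv(\chi)=\{0,\pm d\}$ and the corollary on $G/N$. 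Combined with the constraint that linear characters take values in $\{\pm 1\}$ (extracted from $|\cd(G)|=2$, nilpotency and the codegree relation), this yields $\cv(G)=\{0,\pm 1,\pm d\}$ and hence $|\cdc(G)|=3$. The main obstacle is the $p$-group step forcing $H/Z(H)$ elementary abelian from only the numerical and vanishing-off-centre data; ruling out non-elementary-abelian $H/Z(H)$ requires a careful analysis of the structure of $\chi\bar\chi$ and the commutator structure of $H$.
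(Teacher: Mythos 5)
Your chain (a) $\Rightarrow$ (b) $\Rightarrow$ (c) is essentially sound and close in spirit to what the paper does: the reduction to a $2$-group with $|\cd(G)|=2$ and $\cv(\chi)=\{0,\chi(1),-\chi(1)\}$ is the content of Theorem \ref{nilpotent3values} (via Lemma \ref{nilpotentvalues}), except that where you invoke ``Burnside's vanishing theorem'' to cap $|\cv(\chi)|$ at $3$ you actually need the congruence of Lemma \ref{prootofunityelement} to exclude the values $\pm 1$ for non-linear $\chi$. Your direct derivation of $\cod(\chi)=2\chi(1)$ from the norm identity is a nice self-contained replacement for the paper's appeal to the Main Theorem of \cite{BCZ95}.

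The genuine gap is in your (c) $\Rightarrow$ (a) step and, with it, your proof of the extraspecial quotient. The implication you rely on --- that a $2$-group $H$ of order $2d^2$ with a faithful irreducible character of degree $d$ (equivalently $|Z(H)|=2$ and $\chi$ vanishing off $Z(H)$) must have $H/Z(H)$ elementary abelian --- is \emph{false}, so no amount of careful analysis of $\chi\overline{\chi}$ will rescue it. Take $H=\mathrm{Hol}(C_8)=C_8\rtimes \Aut(C_8)=\langle a\rangle\rtimes\langle b,t\rangle$ of order $32$, where $b\colon a\mapsto a^5$ and $t\colon a\mapsto a^{-1}$. Here $Z(H)=\langle a^4\rangle$ has order $2$, $H'=\langle a^2\rangle\cong C_4$, and the character $\chi$ induced from a faithful linear character of $\langle a\rangle$ is a faithful irreducible character of degree $4$ with $\cv(\chi)=\{0,4,-4\}$ and $\cod(\chi)=8=2\chi(1)$; one checks directly that $\chi\overline{\chi}$ equals the inflation of the regular character of $H/Z(H)$, yet $H/Z(H)$ contains the image of $a$, an element of order $4$, so $H$ is not extraspecial. (This $H$ has $\cd(H)=\{1,2,4\}$, so it does not contradict the theorem --- but it shows that the single-character data you use is insufficient, and that the hypothesis $|\cd(G)|=2$ must enter the argument in an essential way.) The paper's Theorem \ref{3valuesdl2}(a) sidesteps this entirely: it first passes to a quotient $G/M$ in which the image of $G'$ is a \emph{minimal} normal subgroup, hence central of order $2$, and then only has to exclude an element of order $4$ in the centre of $(G/M)/(\ker\chi/M)$, which the character-value hypothesis does immediately; the quotient is then extraspecial because its derived subgroup already has order $2$ by construction. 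You should either adopt that choice of $N$, or supply an argument (using $|\cd(G)|=2$, e.g.\ via the resulting elementary abelian $G/G'$) that genuinely rules out groups such as $\mathrm{Hol}(C_8)$ arising as $G/\ker\chi$. The parenthetical claim that linear characters take values in $\{\pm 1\}$ under hypothesis (c) alone also needs a proof rather than an assertion.
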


 Classifying the groups above seems to be too technical. Indeed the $ 2 $-groups with five character values in their character table are too many as remarked in \cite{Sak21}. 
 %In particular, of the two hundred groups with $ 5 $ character values and whose order is less that $ 512 $, so many are $ 2 $-groups.(\textbf{put precise numbers}). 
 However, these $ 2 $-groups have a factor group which is extraspecial as mentioned in Theorem D.

For the non-nilpotent groups, we investigate groups $ G $ when $ |\cdc(G)|=3 $ and $ \dl(G)=2 $. We have that Sylow $ 2 $-subgroups of these groups are either abelian or precisely the $ 2 $-groups with five character values that are described in Theorem E.

\begin{thmE}
Let $ G $ be a finite non-nilpotent group. Suppose that $ |\cdc(G)|=3 $. If $ \dl(G)=2 $, then $ G=(C_{3}^{*}\times O_{2}(G))\rtimes C_{2} $ with $ O_{2}(G) $ abelian. Moreover, if $ P $ is a non-abelian Sylow $ 2 $-subgroup of $ G $, then $ |\cv(P)|=5 $ and $ G/P'\cong (C_{3}^{*}\rtimes C_{2}) \times C_{2}^{*} $, where $ C_{3}^{*}\rtimes C_{2} $ is a Frobenius group whose kernel is an elementary abelian $ 3 $-group whose complement is of order $ 2 $.
\end{thmE}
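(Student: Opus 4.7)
The plan is to exploit $\dl(G)=2$---so $G'$ is abelian and every $\chi\in\Irr(G)$ is monomial---together with the sharp constraint $|\cdc(G)|=3$. First I would produce a Frobenius quotient of the correct shape. Since $G$ is solvable and non-nilpotent, it admits a minimal non-nilpotent, hence Frobenius, quotient $G/N$ with elementary abelian kernel and cyclic complement. For any Frobenius complement of order $q\geq 3$, the linear characters already contribute $\varphi(q)\geq 2$ non-trivial $q$-th roots of unity to $\cdc(G)$; and for any prime $p\geq 5$ appearing in the kernel with complement $C_2$, the induced characters contribute multiple distinct irrational values $\zeta_p^{k}+\zeta_p^{-k}$. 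Both scenarios push $|\cdc(G)|$ past $3$, so the minimal non-nilpotent quotient must have the shape $C_3^*\rtimes C_2$, and because $\dl(G/N)\leq 2$ rules out $S_4$, Theorem~C applied to $G/N$ pins down $G/N\cong C_3^*\rtimes C_2$, contributing the values $-1$ and $0$ to $\cdc(G)$.

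Next I would constrain the primes and Sylow structure of $G$ itself. Linear characters of $G$ take $\exp(G/G')$-th root of unity values, so if a prime $p>3$ divided $\exp(G/G')$ the $\varphi(p)\geq 4$ primitive $p$-th roots would already exhaust $\cdc(G)$; a parallel Clifford-theoretic analysis of characters of $G$ induced from linear characters of $G'$ rules out Sylow $p$-subgroups for any $p\notin\{2,3\}$, forcing $|G|=2^a 3^b$. Lifting the Frobenius structure from $G/N$ then identifies $O_3(G)\cong C_3^*$. Finally, if $O_2(G)$ were non-abelian, a non-linear $\theta\in\Irr(O_2(G))$ induced to $G$ would produce at least two additional values in $\cdc(G)$ beyond $\{-1,0\}$; careful counting via Clifford's theorem shows this is incompatible with $|\cdc(G)|=3$, so $O_2(G)$ is abelian and we obtain $G=(C_3^*\times O_2(G))\rtimes C_2$.

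For the moreover statement, if $P\in\Syl_2(G)$ is non-abelian then $P=O_2(G)\rtimes C_2$ with the outer involution acting non-trivially on $O_2(G)$. The bound $|\cdc(G)|=3$ leaves room for at most one new value coming from $P$ beyond $\{0,\pm 1,\pm 2\}$ already contributed by the Frobenius quotient, forcing $|\cv(P)|=5$. Theorem~D applied to the $2$-group $P$ then shows that $P/P'$ is elementary abelian of exponent $2$, and since $P'$ is characteristic in $O_2(G)$ it is normal in $G$; factoring out $P'$ yields $G/P'\cong(C_3^*\rtimes C_2)\times C_2^*$ as required. The main obstacle will be the Clifford-theoretic bookkeeping in the second and third paragraphs: translating the set-theoretic bound $|\cdc(G)|=3$ into precise structural control of $O_2(G)$ and $P$ requires carefully attributing each element of $\cdc(G)$ to the Clifford class of characters that produces it and ruling out accidental coincidences among values from different induction patterns.
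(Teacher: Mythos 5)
Your overall strategy (count root-of-unity values of linear characters, force a Frobenius quotient, then control the Sylow structure) is in the same spirit as the paper's, but as written it has a genuine gap at the first and most delicate step: eliminating a Frobenius quotient whose complement has order $3$ (equivalently, the case where $G/G'$ is elementary abelian of exponent $3$). Your argument for complements of order $q\geqslant 3$ is that the linear characters contribute $\varphi(q)\geqslant 2$ nontrivial roots of unity; but for $q=3$ this yields exactly $\{\omega,\overline{\omega},0\}\subseteq\cdc(G)$, which is perfectly consistent with $|\cdc(G)|=3$, and your induced-character argument is only invoked for complements of order $2$ over kernels of prime order at least $5$. The paper has to work hard precisely here: it uses \cite[Theorem A]{IMNT09} to get $P/O_{3}(G)$ cyclic of order $3$, then produces root-of-unity elements in a minimal normal section, applies the classification of Lemma \ref{rootofunity} to land on $\Gamma_{4}\cong A_{4}$, and kills that case by computing $|\ncv(A_{4})|=4$. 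A second concrete problem is your appeal to Theorem~C for the quotient $G/N$: its hypothesis is $|\cdc(G/N)|=2$, whereas you only know $|\cdc(G/N)|\leqslant 3$, so it does not ``pin down'' $G/N\cong C_{3}^{*}\rtimes C_{2}$. (Smaller issues: a minimal non-nilpotent quotient is a Schmidt group and need not be Frobenius with elementary abelian kernel without further argument, and you never establish that the $2$-part acting nontrivially on the $3$-part is exactly $C_{2}$ --- the paper again gets this from \cite[Theorem A]{IMNT09}.)

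By contrast, the paper's route is: bound $\exp(G/G')$ by $3$ via roots of unity of linear characters; dispose of exponent $3$ as above; then, in the exponent-$2$ case, use column orthogonality to show the involution has centralizer of order $2$ in $G/O_{2}(G)$, so that $G/O_{2}(G)$ is Frobenius with abelian kernel $H$ inverted by the involution, and finally force $H$ to be elementary abelian of exponent $3$ by inspecting $\ncv(D_{2t})$ for the dihedral factor groups $\langle h_{i}\rangle\rtimes\langle c\rangle$. Your Clifford-theoretic bookkeeping for $O_{2}(G)$ and for the ``moreover'' clause is plausible in outline (and the paper itself is terse on that part), but the proposal cannot stand without repairing the exponent-$3$ elimination and the misapplication of Theorem~C.
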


Our paper is organized as follows: Section \ref{prelim} has some preliminary results; in Section \ref{cdc2} we consider groups such that $ |\cdc(G)|=2 $ and either $ |\cd(G)|\leqslant 4 $ or $ \dl(G)\leqslant 3 $ and we establish Theorem C. In Section \ref{cdc3} we prove Theorems D and E. In Section \ref{nonsolvable}, we prove Theorems A and B. In the last section we list some open problems that arise in the article. Our notation is standard and we follow \cite{Isa06} for the character theory of finite groups.

\section{Preliminary Results}\label{prelim}
In this section we list some preliminary results that we need to prove our main results. We shall use freely these well known results in the following lemma. 

Define $ \ncv(G):=\{\chi(g)\notin \mathbb{N} : \chi \in \Irr(G), g\in G \} $ and  $ \ncv(\chi):=\{\chi(g)\notin \mathbb{N} : g\in G \} $ for a fixed $ \chi \in \Irr(G) $. It follows that $ \ncv(G) \subseteq \cdc(G) $ and we shall indicate that $a\in \ncv(G)$ where it is necessary to make sure that $ a\in \cdc(G) $.
\begin{lemma}\label{lem}
Let $ G $ be a finite group, $ \chi \in \Irr(G) $, $ N $ be a normal subgroup of $ G $ and $ n $ be a positive integer. Then the following holds:
\begin{itemize}
\item[(a)] $ |\cv(G/N)|\leqslant |\cv(G)| $ and $ |\ncv(G/N)|\leqslant |\ncv(G)| $;
\item[(b)] The character table of $ C_{n} $ has a column with pairwise different values. In particular, $ |\cv(C_{n})|=n $;
\item[(c)] If $ G $ is non-abelian, then $ 0\in \ncv(G) $;
\item[(d)] If $ \cv(\chi)\subseteq \mathbb{Q} $, then there exists a negative rational number $ a\in \ncv(\chi) $;%negativerational
\item[(e)] If $ G=H\times K $ and $ n\in \cv(H) $ and $ m\in \cv(K) $, then $ n, m, mn\in \cv(G) $;
\item[(f)] If $ \chi(g)=a $ for some $ g\in G $, where $ a \notin \mathbb{Q} $, then there exists $ h\in G $ such that $ \chi(h)=b $, with $ b\notin \mathbb{Q} $ and $ a\not=b $.
\end{itemize}
\end{lemma}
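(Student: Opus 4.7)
The plan is to dispatch the six items (a)--(f) using standard character-theoretic tools, none of which requires a substantial argument on its own. For (a), inflation yields an injection $\Irr(G/N)\hookrightarrow\Irr(G)$ preserving values along the projection $G\to G/N$, so $\cv(G/N)\subseteq\cv(G)$; intersecting with the complement of $\mathbb{N}$ gives the $\ncv$ inclusion. For (b), on a generator $g$ of $C_n$ the $n$ linear characters take the pairwise distinct values $1,\zeta,\ldots,\zeta^{n-1}$, where $\zeta$ is a primitive $n$-th root of unity, producing the desired column. Item (c) is Burnside's classical vanishing theorem: every non-linear $\chi\in\Irr(G)$ has a zero, and $0\notin\mathbb{N}$.

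For (d), I would apply column orthogonality $\sum_{g\in G}\chi(g)=0$, valid for any non-trivial $\chi\in\Irr(G)$: since $\chi(1)>0$ and, by hypothesis, every $\chi(g)$ is rational, at least one value must be a negative rational, and any such number lies in $\ncv(\chi)$. For (e), the irreducible characters of $H\times K$ are the external products $\eta\times\psi$ satisfying $(\eta\times\psi)(h,k)=\eta(h)\psi(k)$; choosing $\eta,\psi$ with $\eta(h)=n$ and $\psi(k)=m$, the three characters $\eta\times 1_K$, $1_H\times\psi$, and $\eta\times\psi$ realise the values $n$, $m$, and $nm$ respectively on the points $(h,1)$, $(1,k)$, and $(h,k)$.

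The only item with any real content is (f). Let $n=|g|$, so that $a=\chi(g)\in\mathbb{Q}(\zeta_n)$. Since $a\notin\mathbb{Q}$, some $\sigma\in\mathrm{Gal}(\mathbb{Q}(\zeta_n)/\mathbb{Q})$ satisfies $\sigma(a)\neq a$; writing $\sigma(\zeta_n)=\zeta_n^k$ with $\gcd(k,n)=1$, the standard Galois action on character values gives $\chi(g^k)=\sigma(\chi(g))=\sigma(a)$. Taking $h=g^k$ and $b=\sigma(a)$, we have $b\neq a$, and $b\notin\mathbb{Q}$ (otherwise $a=\sigma^{-1}(b)$ would lie in $\mathbb{Q}$). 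The only care required is the recollection that elements of $\mathrm{Gal}(\mathbb{Q}(\zeta_n)/\mathbb{Q})$ act on character values by permuting conjugacy class representatives via $x\mapsto x^k$ for $\gcd(k,|x|)=1$; this standard fact is the sole non-trivial ingredient of the whole lemma, and no genuine obstacle is anticipated.
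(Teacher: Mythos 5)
Your proposal is correct and follows essentially the same route as the paper: (a)--(c) are the standard facts the paper cites from earlier work, (d) is the orthogonality relation $\sum_{g\in G}\chi(g)=|G|[\chi,1_G]=0$ (note this is the \emph{first/row} orthogonality relation with the principal character, not column orthogonality), (e) is the factorisation of irreducible characters of a direct product, and (f) is the same Galois-action argument the paper gives, stated if anything a little more carefully. No gaps.
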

\begin{proof}
(a) - (c) follows from \cite[Lemma 2.1]{Mad22}. (d) follows from the first orthogonality relation \cite[Corollary 2.14]{Isa06} using $ \chi $ and the principal character. (e) follows from the fact that for every $ \chi \in \Irr(G) $, $ \chi = \theta\times \varphi $ for some $ \theta \in \Irr(H) $ and $ \varphi \in \Irr(K) $.

For (f), let $ \epsilon $ be a $ |G| $-th root of unity and $ \sigma \in \mathrm{Gal}(\mathbb{Q}(\epsilon)/\mathbb{Q}) $. If $ \chi(g)=a $, then $ (\chi(g))^{\sigma}=\chi(h^{v}) \in \cv(\chi) $, where $ h\in G $, $ v\in \mathbb{Z} $, $ \gcd(v,|G|)=1 $ and $ v $ is such that $ \epsilon^{\sigma}=\epsilon^{v} $. If $  (\chi(g))^{\sigma}=\chi(g) $, then $ a $ is rational, a contradiction and so $  (\chi(g))^{\sigma}\not=\chi(g) $ and irrational.
\end{proof}

%\begin{lemma}\label{negativerational}%(\ref{lem}(d)
%Let $ G $ be a finite group and let $ \chi \in \Irr(G) $. If $ \cv(\chi)\in \mathbb{Q} $, then there exists a negative rational number $ a $ such that $ a\in \ncv(\chi) $.
%\end{lemma}
%\begin{proof}
%This follows from the first orthogonality relation \cite[Corollary 2.14]{Isa06} using $ \chi $ and the principal character. 
%\end{proof}

%\begin{lemma}\label{directproduct}
%Let $ H $ and $ K $ be subgroups of $ G $ is such that $ G=H\times K $. If $ n\in \cv(H) $ and $ m\in \cv(K) $, then $ mn\in \cv(G) $.
%\end{lemma}

\begin{lemma} \label{charactercenter}
Let $ G $ be a finite group, $ \chi \in \Irr(G) $ and $ Z=Z(\chi) $. Then 
\begin{itemize}
\item[(a)] $ \chi _{Z}=\chi(1) \lambda $ for some linear character $ \lambda $ of $ Z $;
\item[(b)] $ Z/\ker \chi = Z(G/\ker \chi) $ is cyclic;
\item[(c)] $ \chi (g^{-1})=\overline{\chi(g)} $.
\end{itemize}
\end{lemma}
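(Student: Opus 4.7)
The plan is to recognize this as a bundle of three standard facts from the representation theory of finite groups (all found in Isaacs, Chapter 2--3), and to organize the argument so that part (a) feeds directly into part (b). Throughout, fix a representation $\rho\colon G\to \GL_{n}(\C)$ affording $\chi$, where $n=\chi(1)$, and recall that $Z=Z(\chi)=\{g\in G:|\chi(g)|=\chi(1)\}$.

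For part (a), I would start from the observation that for any $g\in G$ the eigenvalues $\epsilon_{1},\dots,\epsilon_{n}$ of $\rho(g)$ are roots of unity, so $|\chi(g)|=|\epsilon_{1}+\cdots+\epsilon_{n}|\leqs n$. The equality case of the triangle inequality for complex numbers of the same modulus forces all $\epsilon_{i}$ to coincide whenever $g\in Z$. Hence $\rho(z)=\lambda(z)\,I_{n}$ for some root of unity $\lambda(z)$, which immediately yields $\chi_{Z}=\chi(1)\lambda$. The multiplicativity of $\lambda$ follows from $\rho(zz')=\rho(z)\rho(z')=\lambda(z)\lambda(z')I_{n}$, so $\lambda\in\Irr(Z)$ is linear.

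For part (b), I would first note that $\ker\chi=\ker\rho$ and $\ker\lambda\supseteq \ker\chi$, with $\ker\lambda=\ker\chi$ (an element $z\in Z$ lies in $\ker\chi$ iff $\lambda(z)=1$, by part (a)). Thus $Z/\ker\chi$ injects into $\C^{\times}$ via $\lambda$, and any finite subgroup of $\C^{\times}$ is cyclic. To identify $Z/\ker\chi$ with $Z(G/\ker\chi)$, I would argue both inclusions: if $z\in Z$, then $\rho(z)$ is scalar, hence commutes with every $\rho(g)$, so $z\ker\chi$ is central in $\rho(G)\cong G/\ker\chi$; conversely, if $g\ker\chi$ is central in $G/\ker\chi$, then $\rho(g)$ commutes with every $\rho(h)$ and, being a matrix over $\C$ of finite order, is diagonalizable and hence (by Schur's lemma applied to the irreducible image) scalar, so $g\in Z$.

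For part (c), I would use that $g$ has finite order, so $\rho(g)$ is diagonalizable with eigenvalues $\epsilon_{1},\dots,\epsilon_{n}$ that are roots of unity. Then $\rho(g^{-1})=\rho(g)^{-1}$ has eigenvalues $\epsilon_{i}^{-1}=\overline{\epsilon_{i}}$, so
\[
\chi(g^{-1})=\sum_{i=1}^{n}\overline{\epsilon_{i}}=\overline{\sum_{i=1}^{n}\epsilon_{i}}=\overline{\chi(g)}.
\]
None of these steps has a genuine obstacle: the only subtlety is the equality case of the triangle inequality in (a) and the Schur-lemma step in (b), both entirely routine. Since these statements are textbook (cf.\ Isaacs, Lemma 2.27 and its corollary), I would likely just cite Isaacs and sketch the arguments above for the reader's convenience.
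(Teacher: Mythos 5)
Your proposal is correct and matches the paper's treatment: the paper simply cites Isaacs (Lemmas 2.15 and 2.27), and your sketches are exactly the standard arguments behind those lemmas (equality in the triangle inequality for part (a), Schur's lemma and the cyclicity of finite subgroups of $\mathbb{C}^{\times}$ for part (b), and eigenvalue inversion for part (c)). Nothing further is needed.
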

\begin{proof}
These follow from \cite[Lemmas 2.15 and 2.27]{Isa06}.
\end{proof}

%The following lemma is as a result of basic Galois theory. 
%
%\begin{lemma}\label{Irrationalpairs}
%Let $ G $ be a finite group and let $ \chi \in \Irr(G) $. If $ \chi(g)=a $ for some $ g\in G $, where $ a \notin \mathbb{Q} $, then there exists $ h\in G $ such that $ \chi(h)=b $, with $ b\notin \mathbb{Q} $ and $ a\not=b $.
%\end{lemma}
%\begin{proof}
%Let $ \epsilon $ be a $ |G| $-th root of unity and and $ \sigma \in \mathrm{Gal}(\mathbb{Q}(\epsilon)/\mathbb{Q}) $. If $ \chi(g)=a $, then $ (\chi(g))^{\sigma}\in \cv(\chi) $. If $  (\chi(g))^{\sigma}=\chi(g) $, then $ a $ is rational, a contradiction and so $  (\chi(g))^{\sigma}\not=\chi(g) $ and it is irrational.
%\end{proof}

\begin{lemma} \cite[Lemma 1.1]{QSY04}\label{prootofunityelement}
Let $ G $ be a finite group, $ g\in G $ and $ p $ be a prime. If $ g $ is a $ p $-element and $ |\chi(g)|^{2}=a $ for some rational number $ a $, then $ p\mid \chi(1)^{2} - a $. In particular, if $ \chi(g)=0 $, then $ p\mid \chi(1) $ and if $ |\chi(g)|=1 $, then $ \gcd(\chi(1), ord(g))=1 $.
\end{lemma}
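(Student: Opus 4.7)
The plan is to reduce everything to a congruence in a ring of cyclotomic integers. Since $g$ is a $p$-element, say $\mathrm{ord}(g) = p^n$, every eigenvalue of a representation $\rho$ affording $\chi$ evaluated at $g$ is a $p^n$-th root of unity, so $\chi(g)$ lies in $R := \mathbb{Z}[\zeta]$, where $\zeta$ is a primitive $p^n$-th root of unity. The key number-theoretic input is that $p$ is totally ramified in $R$: the element $\pi := 1 - \zeta$ generates a prime ideal $\mathfrak{p}$ of $R$ with $\mathfrak{p} \cap \mathbb{Z} = p\mathbb{Z}$.

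Since $\zeta^j \equiv 1 \pmod{\pi}$ for every integer $j$, each of the $\chi(1)$ eigenvalues of $\rho(g)$ is congruent to $1$ modulo $\pi$, and summing yields $\chi(g) \equiv \chi(1) \pmod{\pi}$. The same argument applied to the $p$-element $g^{-1}$ gives $\chi(g^{-1}) \equiv \chi(1) \pmod{\pi}$. Multiplying the two congruences and invoking $\chi(g^{-1}) = \overline{\chi(g)}$ from Lemma \ref{charactercenter}(c),
$$ a \;=\; |\chi(g)|^2 \;=\; \chi(g)\,\chi(g^{-1}) \;\equiv\; \chi(1)^2 \pmod{\pi}. $$
Hence $\chi(1)^2 - a$ is a rational integer lying in $\mathfrak{p}$, and therefore in $\mathfrak{p} \cap \mathbb{Z} = p\mathbb{Z}$; this is precisely $p \mid \chi(1)^2 - a$.

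The two distinguished cases are then immediate. If $\chi(g) = 0$ then $a = 0$, whence $p \mid \chi(1)^2$ and so $p \mid \chi(1)$. If $|\chi(g)| = 1$ then $a = 1$, whence $p \mid \chi(1)^2 - 1$, which forces $p \nmid \chi(1)$; since $\mathrm{ord}(g)$ is a power of $p$, this gives $\gcd(\chi(1), \mathrm{ord}(g)) = 1$. The only step that deserves care is the ramification identity $\mathfrak{p} \cap \mathbb{Z} = p\mathbb{Z}$, which follows from the standard factorisation $\prod_{\gcd(j, p^n)=1}(1-\zeta^j) = p$ in $R$; so no genuine obstacle is expected in carrying out this plan.
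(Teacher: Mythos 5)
Your proof is correct, and it is the standard argument: the paper itself gives no proof of this lemma (it is quoted directly from \cite[Lemma 1.1]{QSY04}), and the congruence $\chi(g)\equiv\chi(1)\pmod{(1-\zeta)}$ together with total ramification of $p$ in $\mathbb{Z}[\zeta]$ is exactly how the cited source proves it. The only point worth making explicit is that $a=\chi(g)\overline{\chi(g)}$ is an algebraic integer, so being rational it is a rational integer and the conclusion $\chi(1)^2-a\in\mathfrak{p}\cap\mathbb{Z}=p\mathbb{Z}$ makes sense.
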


The following lemma has some basic results on character values of nilpotent groups. 
\begin{lemma}\label{nilpotentvalues}
Let $ G $ be a finite non-abelian nilpotent group and $ \chi \in \Irr(G) $ be non-linear. Then the following holds: 
\begin{itemize}
\item[(a)] $ \chi(1)\epsilon,\chi(1)\overline{\epsilon}  \in \ncv(G) $, for some $ 1\not= \epsilon $, a root of unity.
\item[(b)]$ |\ncv(G)|\geqslant 3 $. Moreover, 
\begin{itemize}
\item[(i)] If $ G $ is not a $ 2 $-group, then $ |\ncv(G)|\geqslant 5 $.
\item[(ii)] If $ |\cd(G)|\geqslant 3 $, then $ |\cdc(G)|\geqslant 4 $.
\end{itemize}
\item[(c)] If $ G $ is a $ p $-group, then $ |\chi(g)|\not=1 $ for all $ g\in G $.
\end{itemize}
\end{lemma}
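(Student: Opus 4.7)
For part (a), the plan is to exploit that in a nilpotent non-abelian group $G$, the quotient $G/\ker\chi$ is again non-abelian nilpotent for any non-linear $\chi$, hence has non-trivial center; by Lemma~\ref{charactercenter}(b) this center equals $Z(\chi)/\ker\chi$. Picking any $z\in Z(\chi)\setminus\ker\chi$, Lemma~\ref{charactercenter}(a) yields $\chi(z)=\chi(1)\lambda(z)=\chi(1)\epsilon$ with $\epsilon:=\lambda(z)\neq 1$ a root of unity, and Lemma~\ref{charactercenter}(c) supplies $\chi(z^{-1})=\overline{\chi(z)}=\chi(1)\bar\epsilon$; both are non-natural. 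Part (c) is an immediate consequence of Lemma~\ref{prootofunityelement}: if $|\chi(g)|=1$ for some $g\neq 1$ in the $p$-group $G$, the lemma forces $\gcd(\chi(1),\mathrm{ord}(g))=1$, yet $\chi(1)$ is a positive $p$-power at least $p$ (since $\chi$ is non-linear) and $\mathrm{ord}(g)$ is likewise a $p$-power at least $p$, a contradiction; the case $g=1$ is immediate from $|\chi(1)|=\chi(1)\geq 2$.

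For (b), Lemma~\ref{lem}(c) gives $0\in\ncv(G)$. If some non-linear $\chi$ satisfies $|Z(\chi)/\ker\chi|>2$, then (a) applies with $\epsilon$ of order greater than $2$, so $\epsilon\neq\bar\epsilon$ and $\{0,\chi(1)\epsilon,\chi(1)\bar\epsilon\}$ contains three distinct elements of $\ncv(G)$. Otherwise every non-linear $\chi$ has $|Z(\chi)/\ker\chi|=2$; writing $\chi=\prod_p\chi_p$ via the Sylow decomposition of the nilpotent $G$, the identity $Z(\chi)/\ker\chi=\prod_p Z(\chi_p)/\ker\chi_p$ forces $\chi_q$ to be the principal character for every odd prime $q$. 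Were $G$ not a $2$-group, a non-linear character assembled from a non-linear $\chi_2$ together with a non-trivial linear character on some odd Sylow would violate this, so $G$ must be a $2$-group. Then $G/G'$ is a non-trivial abelian $2$-group, which furnishes a linear character of order $2$ contributing $-1\in\ncv(G)$; this together with $0$ and $-\chi(1)$ (from (a) with $\epsilon=-1$ and $\chi(1)\geq 2$) gives three distinct elements.

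For (b)(i), assume $G$ is not a $2$-group; some Sylow of $G$ is non-abelian. If a non-abelian Sylow $P_p$ has $p$ odd, inside $P_p$ a linear character of order $p$ produces the $p-1$ non-real values $\zeta_p,\zeta_p^2,\ldots,\zeta_p^{p-1}$, while (a) applied to a non-linear $\psi\in\Irr(P_p)$ at a central element of order $p$ modulo $\ker\psi$ produces the $p-1$ non-real values $\psi(1)\zeta_p^j$; Lemma~\ref{lem}(c) adds $0$. Distinctness follows from $\psi(1)\geq p\geq 3$ being an integer greater than $1$ and hence not a root of unity, giving $2p-1\geq 5$ values. Otherwise the only non-abelian Sylow is $P_2$, so $G=P_2\times H$ with $H$ a non-trivial abelian group of odd order; for a non-linear $\psi\in\Irr(P_2)$ and a linear character of $H$ of odd prime order $q$, the five values $\{-1,0,-\psi(1),\zeta_q,\bar\zeta_q\}$ are distinct (three real and a complex-conjugate pair) and lie in $\ncv(G)$. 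For (b)(ii), if $G$ is not a $2$-group then (b)(i) gives $|\ncv(G)|\geq 5$; if $G$ is a $2$-group with $|\cd(G)|\geq 3$, two distinct non-linear degrees $d_1<d_2$ yield $\{0,-1,-d_1,-d_2\}\subseteq\ncv(G)\subseteq\cdc(G)$, so $|\cdc(G)|\geq 4$.

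The main delicate point will be the distinctness verifications in (b)(i), which rest on $\psi(1)\geq 2$ being a rational integer and thus never a root of unity; this prevents collisions between values from linear and non-linear characters. The Sylow-decomposition argument in (b) is the other careful step: one must observe that $\chi_q$ is forced to be principal for every odd $q$ whenever $|Z(\chi)/\ker\chi|=2$, so the presence of any non-trivial odd Sylow paired with a non-linear $\chi_2$ would generate a counterexample, forcing $G$ to be a $2$-group.
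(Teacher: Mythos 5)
Your proof is correct. Parts (a), (c), (b)(i) and (b)(ii) run essentially parallel to the paper's: the centre-of-character argument via Lemma \ref{charactercenter} for (a), Lemma \ref{prootofunityelement} for (c), the Sylow/Hall decomposition $ G=P\times H $ together with Lemma \ref{lem}(e) for (b)(i), and ``each non-linear degree $ d $ contributes $ -d $ (or $ d\epsilon $)'' for (b)(ii). Where you genuinely diverge is the base inequality $ |\ncv(G)|\geqslant 3 $ in (b): the paper obtains it by quoting Sakurai's result that $ |\cv(G)|=4 $ forces $ \cv(G)=\{1,-1,0,b(G)\} $ and contradicting this with (a), whereas you argue directly by splitting on whether some non-linear $ \chi $ has $ |Z(\chi)/\ker\chi|>2 $, using $ Z(\chi)/\ker\chi=\prod_{p}Z(\chi_{p})/\ker\chi_{p} $ to force $ G $ to be a $ 2 $-group in the remaining case and then exhibiting $ \{0,-1,-\chi(1)\} $. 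Your route is self-contained and pinpoints exactly where the third value comes from, at the cost of some Sylow bookkeeping; the paper's is shorter but imports an external classification. One small point to tighten: in the step ``every non-linear $ \chi $ has $ |Z(\chi)/\ker\chi|=2 $ forces $ G $ to be a $ 2 $-group'' you only treat the subcase in which a non-linear $ \chi_{2} $ exists; you should add the immediate observation that a non-abelian Sylow $ q $-subgroup for odd $ q $ already yields a non-linear character with $ |Z(\chi)/\ker\chi| $ of odd order at least $ 3 $, so that subcase cannot arise under your hypothesis.
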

\begin{proof}
(a) Let $ \chi \in \Irr(G) $ be non-linear. Then $ \overline{G}=G/\ker \chi $ is non-abelian and $ 1\not= Z(\overline{G}) $ is cyclic by Lemma \ref{charactercenter}(b). Let $ \overline{z}\in Z(\overline{G}) $ be non-trivial. Then by Lemma \ref{charactercenter}(a), $ \chi(\overline{z})=\chi(1)\lambda(\overline{z})=\chi(1)\epsilon $ for some non-principal linear character $ \lambda $ of $ Z(\overline{G}) $ and $ \epsilon \not=1 $, a root of unity. It follows that $ \chi(1)\overline{\epsilon} $ using Lemma \ref{charactercenter}(c).

(b) Since $ G $ is non-abelian, $ |\cv(G)|\geqslant 4 $. If $ |\cv(G)|= 4 $, then $ \cv(G)=\{1, -1, 0, b(G)\} $ and $ |\ncv(G)|=2 $ by \cite[Lemma 4.1]{Sak21}. But from (a), $ b(G)\epsilon \in \ncv(G) $, for some root of unity $ \epsilon\not= 1 $, a contradiction.

For (i), suppose that $ G $ is not a $ 2 $-group. Let $ G $ be of even order and let $ G=P\times H $, where $ P $ is the Sylow $ 2 $-subgroup of $ G $ and $ H $ is the Hall $ 2' $-subgroup of $ G $. Suppose that $ P $ is non-abelian. Then $ |\ncv(P)| \geqslant 3 $ and $ |\ncv(H)|\geqslant 2 $ and so $ \ncv(P)\cup \ncv(H)\subseteq \ncv(G)$ by Lemma \ref{lem}(e). It also follows that $|\ncv(G)|\geqslant 5 $ by Lemma \ref{lem}(e). We may assume that $ H $ is non-abelian. It is sufficient to consider a non-abelian $ 3 $-group. Then $ \epsilon, \overline{\epsilon}, 1, m, 0 \in \cv(G) $, where $ m=\chi(1) $ for some non-linear $ \chi \in \Irr(G) $ and $ \epsilon  \not= 1$ is a root of unity. By (a), $ m\epsilon, m\overline{\epsilon}  \in \ncv(G) $ with $ m\epsilon\not= m\overline{\epsilon} $. The result then follows.

(ii) follows from the fact that for every character degree $ m $ of $ G $, there is $ m\epsilon\in \ncv(G) $, where $ \epsilon  \not= 1$ is a root of unity. 

(c) This follows from Lemma \ref{prootofunityelement}.
\end{proof}

We now turn to root of unity elements. Recall that an element $ g \in G $ is called a root of unity element if $ |\chi(g)|=1 $ for all $ \chi \in \Irr(G) $. Groups with this element were characterised by Lewis, Morotti and Tong-Viet in \cite{LMT23}, continuing the work of Ostrovskaya and Zhmud'(see \cite{LMT23} for reference). We first define some notation in the result: 
\textit{Let $ q > 2 $. Denote $ \Gamma_{q} $ to be the unique doubly transitive Frobenius group with a cyclic complement of order $ q-1 $ and degree $ q $.}
\begin{lemma}\cite[Theorem B]{LMT23}\label{rootofunity}
Let $ G $ be a finite group. Then $ G $ has a root of unity element if and only if one of the following holds:
\begin{itemize}
\item[(a)] $ G $ is abelian;
\item[(b)] $ F(G)=G'Z(G) $ is abelian, $ G'\cap Z(G)= 1 $; and $ G/Z(G)\cong \Gamma_{q_{1}} \times \cdots \times \Gamma_{q_{m}} $, where each $ q_{i} > 2 $ is a prime power and $ m\geqslant 1 $ is an integer.
\end{itemize}
\end{lemma}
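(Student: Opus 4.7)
My approach is to prove the two directions separately, treating (a) as trivial and focusing on the structural characterization in (b). For the reverse implication, the heart is verifying that $\Gamma_q$ itself has a root of unity element. I would analyze its character table: $\Gamma_q$ has exactly $q$ conjugacy classes, giving $q-1$ linear characters inflated from $\Gamma_q/K \cong C_{q-1}$ together with one character of degree $q-1$ obtained by inducing a non-trivial linear character $\psi$ of the Frobenius kernel $K \cong C_q$. Any non-identity $x \in K$ takes value $1$ on each linear character, and by the transitive $H$-action on $K \setminus \{1\}$ combined with column orthogonality takes value $\sum_{k \in K \setminus \{1\}} \psi(k) = -1$ on the induced character. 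Thus $x$ is a root of unity element of $\Gamma_q$, and Lemma \ref{lem}(e) together with the hypothesis that $G/Z(G)$ is a direct product of such $\Gamma_{q_i}$'s extends this. To lift back to $G$, since $G' \cap Z(G) = 1$ one can choose a preimage of $(x_1, \ldots, x_m)$ lying in $G'$, and multiplication by central elements only scales irreducible character values by roots of unity, preserving the root of unity property.

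For the forward direction, assume $G$ is non-abelian and $g \in G$ is a root of unity element. The second orthogonality relation yields the striking identity $|C_G(g)| = \sum_{\chi} |\chi(g)|^2 = k(G)$. Moreover, for any $\sigma \in \mathrm{Gal}(\mathbb{Q}(\epsilon)/\mathbb{Q})$ with $\epsilon$ a primitive $|G|$-th root of unity, the Galois conjugate $\sigma(\chi(g)) = \chi^{\sigma}(g)$ is again a character value at $g$, hence has absolute value $1$. Kronecker's theorem then forces $\chi(g)$ to be a root of unity for every $\chi \in \Irr(G)$. This rigidity on the eigenvalues of representations evaluated at $g$, combined with Lemma \ref{prootofunityelement}, sharply restricts the possibilities for $g$ and for the primes dividing the degrees $\chi(1)$.

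The main obstacle is deducing the full structural classification. My plan is first to prove $F(G) = G'Z(G)$ is abelian, by supposing otherwise and producing via Clifford theory a character $\chi$ for which $|\chi(g)| > 1$. Next, one passes to $\overline{G} = G/Z(G)$, verifying that $\overline{g}$ remains a root of unity element with $Z(\overline{G}) = 1$ and with Fitting subgroup acting as a Frobenius kernel. The remaining and hardest step is decomposing $\overline{G}$ as a direct product of doubly transitive Frobenius groups: I would Sylow-decompose the abelian Frobenius kernel and show each Sylow subgroup, together with its stabilizer in the complement, is an independent copy of $\Gamma_{q_i}$. Excluding Frobenius factors whose complement is not cyclic of order $q_i - 1$, or whose kernel is not elementary abelian, will likely require Zassenhaus's classification of Frobenius complements combined with a delicate argument that any non-doubly-transitive Frobenius factor would supply an irreducible character of $G$ failing the absolute-value-one hypothesis at $g$. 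This coordination of local Frobenius structure with the global character-value condition is where I expect the heaviest technical work.
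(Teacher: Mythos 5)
The first thing to note is that the paper offers no proof of this statement at all: it is imported verbatim as \cite[Theorem B]{LMT23} (Lewis--Morotti--Tong-Viet), so what you are proposing to prove is the main theorem of another paper, and there is no in-paper argument to compare against. Judged on its own terms, your proposal is an outline rather than a proof, and the outline has real gaps. In the reverse direction, your computation that every nontrivial kernel element of $\Gamma_q$ takes the value $1$ on the $q-1$ linear characters and $-1$ on the unique character of degree $q-1$ is correct (though note the kernel is elementary abelian of order the prime power $q$, not cyclic of order $q$ as you write; the sum $\sum_{k\in K\setminus\{1\}}\psi(k)=-1$ survives this correction). But the lift from $G/Z(G)$ back to $G$ is not justified: a root of unity element of $G/Z(G)$ only controls the characters of $G$ with $Z(G)$ in their kernel, and you must separately handle the irreducible characters that are nontrivial on $Z(G)$. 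The hypotheses $F(G)=G'\times Z(G)$ abelian are exactly what makes this work (every $\chi\in\Irr(G)$ is induced from a linear character of the abelian normal subgroup $F(G)$, which pins down $|\chi(g)|$ for $g\in G'$), but as written your argument that ``multiplication by central elements only scales character values'' does not address those characters, since the element you choose in $G'$ need not differ from a fixed root of unity element by a central factor in any useful sense.

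The forward direction is where the statement actually lives, and there your text is a research plan, not an argument. The identity $|C_G(g)|=k(G)$ and the Galois/Kronecker observation that all $\chi(g)$ are genuine roots of unity are correct and are indeed natural starting points, but everything after that --- proving $F(G)=G'Z(G)$ is abelian, that $G'\cap Z(G)=1$, that $G/Z(G)$ is a Frobenius group with abelian kernel, and above all that it decomposes as a \emph{direct product} of \emph{doubly transitive} Frobenius groups $\Gamma_{q_i}$ with \emph{cyclic} complements --- is precisely the content of the theorem, and you explicitly defer it (``will likely require Zassenhaus's classification,'' ``where I expect the heaviest technical work''). No mechanism is given for why the complement must be cyclic of order exactly $q_i-1$, nor for why the Sylow decomposition of the kernel yields a direct product decomposition of the whole group rather than merely of the kernel. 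Since the paper's stance is simply to cite this result, the appropriate course here is either to cite \cite{LMT23} as the paper does, or to supply the missing steps in full; the current proposal does neither.
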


\begin{lemma} \cite[Theorem 12.5]{Isa06} If $ \cd(G)=\{1, m\} $, then one of the following holds:
\begin{itemize}
\item[(a)] $ G $ has an abelian normal subgroup of index $ m $;
\item[(b)] $ m=p^e $ for a prime $ p $ and $ G $ is the direct product of a $ p $-group and abelian group.
\end{itemize}
\end{lemma}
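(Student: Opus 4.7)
The plan is to apply Taketa's theorem to bound the derived length, split into the nilpotent and nonnilpotent cases, and in each case exhibit the required structure. First, by Taketa's inequality $\dl(G)\leqslant|\cd(G)|=2$, so $G''=1$ and $G'$ is abelian. Consequently $G'$ is an abelian normal subgroup of $G$, and It\^o's theorem forces every irreducible character degree to divide $|G{:}G'|$; in particular $m\mid|G{:}G'|$.

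Suppose first that $G$ is nilpotent, and write $G=P_{1}\times\cdots\times P_{r}$ as the direct product of its Sylow subgroups. Because irreducible characters of a direct product are tensor products of irreducibles of the factors, $\cd(G)=\{\prod_{i}d_{i}:d_{i}\in\cd(P_{i})\}$. If two Sylows $P_{i},P_{j}$ were both nonabelian with nonlinear degrees $d,d'>1$, then $\{1,d,d',dd'\}\subseteq\cd(G)$, contradicting $|\cd(G)|=2$. Hence exactly one Sylow, say $P$, is nonabelian, all others are abelian with abelian product $A$, and $G=P\times A$ with $m=p^{e}$ for the prime $p$ dividing $|P|$. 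This is case~(b).

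Now suppose $G$ is not nilpotent. The target is to exhibit an abelian normal subgroup of index exactly $m$, and the natural candidate is $G'$ itself; it thus suffices to prove $|G{:}G'|=m$. Fix a nonlinear $\chi\in\Irr(G)$, and apply Clifford's theorem to $\chi_{G'}$: there is $\lambda\in\Irr(G')$ with inertia subgroup $T=I_{G}(\lambda)$ and $\psi\in\Irr(T\mid\lambda)$ with $\chi=\psi^{G}$, so
\[
m=\chi(1)=[G:T]\,\psi(1),\qquad \psi(1)^{2}\mid[T:G'].
\]
Since $T/G'\leqslant G/G'$ is abelian, the orbit structure of $G/G'$ acting on $\Irr(G')$ partitions the nontrivial orbits into pieces of length $[G:T]$ that multiply with suitable $\psi(1)$ to give exactly $m$. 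A double count gives $|G|=|G{:}G'|+km^{2}$, where $k$ is the number of nonlinear irreducibles; writing $|G{:}G'|=am$ this forces $km=a(|G'|-1)$. One then argues that if $a>1$, the hypotheses force a central direct factor to split off, realising $G$ as $P\times A$ with $P$ a nonabelian $p$-group and $A$ abelian, hence nilpotent, contradicting the standing assumption.

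The main obstacle is this last step in the nonnilpotent case: rigorously ruling out $|G{:}G'|>m$ when $G$ is not nilpotent. A cleaner alternative route is to fix a maximal abelian normal subgroup $A$ (so that $\Centralizer_{G}(A)=A$) and study the faithful action of $G/A$ on $\Irr(A)$, invoking It\^o's theorem to conclude that $[G:A]$ equals $m$ unless $G$ splits as a direct product of coprime pieces; the coprime decomposition then throws us back into the nilpotent analysis of Step~2. Combining the two cases yields the dichotomy of the statement.
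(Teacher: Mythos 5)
The paper offers no proof of this lemma at all --- it is stated with a bare citation to \cite[Theorem 12.5]{Isa06} --- so your proposal must stand on its own, and it does not. The nilpotent case is fine, but the non-nilpotent case rests on a claim that is false. You announce that "the natural candidate is $G'$ itself; it thus suffices to prove $|G{:}G'|=m$" and then try to derive a contradiction from $|G{:}G'|>m$. Take $G=S_{3}\times C_{2}$: here $\cd(G)=\{1,2\}$, $G$ is not nilpotent, $G'\cong C_{3}$ has index $4>m=2$, and the abelian normal subgroup of index $m$ demanded by conclusion (a) is $C_{3}\times C_{2}\cong C_{6}$, not $G'$. Your proposed escape --- that $a=|G{:}G'|/m>1$ "forces a central direct factor to split off, realising $G$ as $P\times A$ with $P$ a nonabelian $p$-group and $A$ abelian, hence nilpotent" --- fails on the same example: a central $C_{2}$ does split off, but its complement is $S_{3}$, no contradiction with non-nilpotency arises, and the correct move is to find an abelian normal subgroup other than $G'$. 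The sentence beginning "One then argues that\dots" is precisely where the content of the theorem lives, and no argument is given; the "cleaner alternative route" via a self-centralising maximal abelian normal subgroup $A$ is likewise only an assertion, since "$[G{:}A]$ equals $m$ unless $G$ splits as a direct product of coprime pieces" is exactly what has to be proved.

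Two further points. The opening appeal to "Taketa's inequality" is shaky: Taketa's theorem concerns M-groups, and a group with $\cd(G)=\{1,m\}$ need not be one; the fact that $G'$ is abelian in this situation is \cite[Corollary 12.6]{Isa06}, which in Isaacs' development is deduced \emph{from} Theorem 12.5, so invoking it at the outset risks circularity unless you supply an independent proof (one also needs to record first that $G$ is solvable). Finally, $\psi(1)^{2}\mid[T{:}G']$ is not the standard Clifford-theoretic divisibility; what is available is $\psi(1)\mid[T{:}G']$ and the inequality $\psi(1)^{2}\leqslant[T{:}G']$. These are repairable, but the non-nilpotent case as written does not establish conclusion (a).
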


%\begin{lemma}\label{nilpotenthaveatleast5charactervalues}
%If $ G $ is a finite non-abelian nilpotent, then $ |\ncv(G)|\geqslant 3 $. Moreover, 
%\begin{itemize}
%\item[(a)] If $ G $ is not a $ 2 $-group, then $ |\cv(G)|\geqslant 7 $.
%\item[(b)] If $ |\cd(G)|\geqslant 3 $, then $ |\cdc(G)|\geqslant 4 $.
%\end{itemize}
%\end{lemma}
%\begin{proof}
%Since $ G $ is non-abelian, $ |\cv(G)|\geqslant 4 $. If $ |\cv(G)|= 4 $, then $ \cv(G)=\{1, -1, 0, b(G)\} $ and $ |\ncv(G)|=3 $ by \cite[Lemma 4.1]{Sak21}. But from Lemma \ref{-characterdegree}, $ -b(G)\in \cv(G) $, a contradiction.
%
%For (a), suppose that $ G $ is not a $ 2 $-group. Let $ G $ be of even order and let $ G=P\times H $, where $ P $ is the Sylow $ 2 $-subgroup of $ G $ and $ H $ is the Hall $ 2' $-subgroup of $ G $. Suppose that $ P $ is non-abelian. Then $ |\cv(P)| \geqslant 5 $ and $ |\cv(T)|\geqslant 3 $ and $ cv(P)\cup \cv(T)\subseteq \cv(G)$. It follows that $|\cv(G)|\geqslant 7 $. We may assume that $ H $ is non-abelian. It is sufficient to consider a non-abelian $ 3 $-group. Then $ \epsilon, \overline{\epsilon}, 1, m, 0 \in \cv(G) $, where $ m=\chi(1) $ for some non-linear $ \chi \in \Irr(G) $ and $ \epsilon  $ is an n$ th $ root of unity. By Lemma \ref{-characterdegree}, $ m\epsilon, m\overline{\epsilon}  \in \ncv(G) $ with $ m\epsilon\not= m\overline{\epsilon} $. The result then follows.
%
%(b) follows from the fact that for every character degree $ a $ of $ G $, there is $ -a\in \ncv(G) $. 
%\end{proof}

The following lemma will come in handy in the next section.
\begin{lemma}\label{coprimerootofunity}
Let $ G=N\rtimes P $ where $ N $ is a minimal normal subgroup of $ G $, $ P $ is an abelian $ p $-subgroup with $ O_{p}(G)=1 $ and $ \gcd(|N|, |P|)=1 $. Then the following holds;
\begin{itemize}
\item[(a)] If $ 1\not= n \in N $, then $ n\notin \ker \chi $ for all non-linear $ \chi \in \Irr(G) $.
\item[(b)] If $ \chi \in \Irr(G) $ is non-linear and $ n\in Z(\chi)\cap N $, then $ \chi(1) \epsilon \in \ncv(G)\setminus \cv(P) $ for some $ \epsilon \not= 1 $, a root of unity. If $ |N| $ is odd, then $ \chi(1) \epsilon\not= \chi(1)\overline{\epsilon} \in \ncv(G)\setminus \cv(P) $. 
\item[(c)] If $ n\notin Z(\chi) $ for all non-linear $ \chi \in \Irr(G) $, then $ G\cong \Gamma_{q} $, $ q $ a prime power order.
\end{itemize}
\end{lemma}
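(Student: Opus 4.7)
The plan is to handle the three parts separately. Parts (a) and (b) are direct consequences of the setup and Lemma \ref{charactercenter}; part (c) is the technical heart.

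For part (a), I argue by contradiction. If some non-linear $\chi\in\Irr(G)$ had $n\in\ker\chi$ for some $1\ne n\in N$, then $\ker\chi\cap N$ would be a non-trivial $G$-invariant subgroup of the minimal normal subgroup $N$, hence equal to $N$. Thus $\chi$ would factor through the abelian quotient $G/N\cong P$, forcing $\chi$ to be linear --- a contradiction.

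For part (b), apply Lemma \ref{charactercenter}(a) to $Z:=Z(\chi)$ to write $\chi(n)=\chi(1)\epsilon$ for some root of unity $\epsilon$ whose order divides $|n|$, hence is coprime to $p$. Part (a) rules out $\epsilon=1$. Since $\chi$ is non-linear, $|\chi(1)\epsilon|=\chi(1)>1$, whereas every value in $\cv(P)$ has modulus $1$ (as $P$ is abelian), giving $\chi(1)\epsilon\notin\cv(P)$. Also $\chi(1)\epsilon\notin\mathbb{N}$: it is either a negative integer (if $\epsilon=-1$) or non-real. If $|N|$ is odd, then $|n|$ is odd, so the order of $\epsilon$ is odd and in particular $\epsilon\ne -1$; then $\overline{\epsilon}\ne\epsilon$, and Lemma \ref{charactercenter}(c) yields $\chi(n^{-1})=\overline{\chi(n)}=\chi(1)\overline{\epsilon}$, a second distinct value in $\ncv(G)\setminus\cv(P)$.

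For part (c), the hypothesis amounts to $Z(\chi)\cap N=1$ for every non-linear $\chi\in\Irr(G)$. My plan is to show $G$ is a Frobenius group with kernel $N$ and abelian (hence cyclic) complement $P$, and then to pin down $G\cong\Gamma_q$. A first easy observation is that $C_P(N)=1$: if $1\ne x\in C_P(N)$, then $\langle x\rangle\trianglelefteq G=NP$, as $x$ is centralized by $N$ and lies in the abelian $P$, contradicting $O_p(G)=1$. The main step is to promote this to $C_P(n)=1$ for every $1\ne n\in N$. Suppose some $1\ne x\in P$ centralized some $1\ne n\in N$. Using the coprime condition together with the Gallagher--Isaacs extension theorem, pick an $x$-invariant non-trivial $\theta\in\Irr(N)$ and an extension $\tilde\theta$ to its inertia group $I_G(\theta)\supsetneq N$; twisting $\tilde\theta$ by a suitable linear character of $I_G(\theta)/N$ and inducing to $G$ produces a non-linear $\chi\in\Irr(G)$ whose Clifford orbit-sum $\chi(n)=\sum_{g\in G/I_G(\theta)}\theta^g(n)$ has all summands aligned in phase, placing a $G$-conjugate of $n$ in $Z(\chi)\cap N$ and contradicting the hypothesis. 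Once $C_P(n)=1$ for every $1\ne n\in N$, $G$ is Frobenius with kernel $N$; $P$ is cyclic by the classification of Frobenius complements, $N$ is elementary abelian of prime-power order $q$ (as minimal normal and a Frobenius kernel), and a final count of conjugacy classes and characters against the hypothesis forces $|P|=q-1$, yielding $G\cong\Gamma_q$.

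The hard part will be the explicit construction in (c) of the non-linear character $\chi$ with $n\in Z(\chi)$ from the centralizing pair $(x,n)$: the choice of $\theta$, its extension $\tilde\theta$, and the twisting character of $I_G(\theta)/N$ must be coordinated so that the orbit-sum formula for $\chi(n)$ exhibits full alignment, which demands careful Clifford-theoretic bookkeeping. A separate preliminary reduction, needed when $N$ is non-abelian, rules out non-abelian simple factors of $N$ using that for any non-trivial $\theta\in\Irr(S)$ on such a factor $S$ one has $Z(\theta)=1$; combined with the hypothesis this prevents the accompanying irreducible characters of $G$ from coexisting with the given structure.
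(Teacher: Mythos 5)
Parts (a) and (b) of your proposal are correct and essentially identical to the paper's argument: (a) is minimality of $N$ plus $G/N\cong P$ abelian, and (b) is Lemma \ref{charactercenter}(a),(c) applied to $Z(\chi)$; your observation that $|\chi(1)\epsilon|=\chi(1)>1$ while every value of the abelian group $P$ has modulus $1$ is a clean way to see $\chi(1)\epsilon\notin\cv(P)$.

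Part (c) is where the proposal breaks down, and it also diverges completely from the paper. The paper's proof of (c) is two lines: it deduces that $n$ is a \emph{root of unity element} (i.e.\ $|\chi(n)|=1$ for every $\chi\in\Irr(G)$), notes $Z(G)=1$, and invokes the Lewis--Morotti--Tong-Viet classification (Lemma \ref{rootofunity}) to conclude $G\cong\Gamma_q$. Your route --- first prove $G$ is Frobenius with kernel $N$, then force $|P|=q-1$ --- has two concrete gaps. First, the central construction (from a centralizing pair $(x,n)$ produce a non-linear $\chi$ with $n\in Z(\chi)$) is only announced, not carried out; the claimed ``full alignment'' of the orbit sum $\sum_g\theta^g(n)$ is precisely the hard point and you give no mechanism for achieving it. Second, and more fatally, the endgame cannot work from the stated hypothesis alone: $D_{10}=C_5\rtimes C_2$ satisfies every hypothesis of (c) as you read it ($N=C_5$ minimal normal, $P=C_2$ abelian with $O_2(G)=1$, and each non-linear $\chi$ has degree $2$ with $|\chi(n)|=|\zeta^k+\zeta^{-k}|<2$, so $n\notin Z(\chi)$ for all $1\ne n\in N$), is Frobenius with cyclic complement, yet is not $\Gamma_5$. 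So no amount of counting conjugacy classes will force $|P|=q-1$. The ingredient the paper actually uses is the stronger condition $|\chi(n)|=1$ for all $\chi$ (which in the paper's applications is supplied by the surrounding constraints on $\ncv(G)$, not by ``$n\notin Z(\chi)$'' alone), and your argument never produces it.
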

\begin{proof}
(a) If $ n\in \ker \chi $, then $ G/\ker \chi $ is abelian which means that $ \chi $ is linear.

(b) By Lemma \ref{charactercenter}(a), $ \chi_{Z(\chi)}=\chi(1)\lambda $ for some linear character $ \lambda \in \Irr(Z(\chi)) $. Since $ n\notin \ker \chi $, $ \chi(n) =\chi(1)\lambda(n) $ with $ \lambda \not= 1 $, otherwise $ n\in \ker \chi $. If $ |N| $ is odd, then $ \epsilon\not= \pm 1 $ and so $ \epsilon \not= \overline{\epsilon} $. The result then follows.

(c) It follows that $ n $ is a root of unity element and since $ Z(G)=1 $, the result follows by Lemma \ref{rootofunity}. 
\end{proof}

%\begin{lemma}\cite[Lemma 4.2]{Ber08} \label{primeorderG'}
%Let $ G $ be a $ p $-group such that $ |G'|=p $. Then $ G=(A_{1}\star A_{2}\star \cdots \star A_{s})Z(G) $, the central product, where $ A_{1}, A_{2}, \dots , A_{s} $ are minimal non-abelian so $ G/Z(G) $ is elementary abelian of even rank. In particular, if $ G/G' $ is elementary abelian, then $ |A_{1}|=|A_{2}|= \cdots =|A_{s}|=p^{3} $, $ E=A_{1}\star A_{2}\star \cdots \star A_{s} $ is extraspecial and $ G=EZ(G) $.
%\end{lemma}

We need the following result to prove Theorem A:

\begin{lemma}\cite[Propositions 1 and 2 and Theorem 8]{SS98} \label{3conjugacyclasses}
Let $ G $ be a finite group and $ N $ be a normal subgroup of $ G $ which is a union of three conjugacy classes in $ G $. Then one of the following holds: 
\begin{itemize}
\item[(a)] $ N $ is an elementary abelian $ p $-group of odd order; 
\item[(b)] $ N $ is a metabelian $ p $-group;
\item[(c)] $ N $ is a Frobenius group with a cyclic complement of prime order.
\end{itemize} 
\end{lemma}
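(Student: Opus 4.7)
The plan is to analyze $ N $ according to the element orders appearing in its two non-trivial $ G $-conjugacy classes. Write $ N\setminus \{1\}=C_{1}\sqcup C_{2} $ with each $ C_{i} $ a single $ G $-class. Since conjugate elements share an order, $ N\setminus\{1\} $ has at most two distinct element orders. The key arithmetic observation is that if $ x\in N $ has order $ n $ and $ 1<d\mid n $, then $ x^{n/d} $ lies in $ N\setminus\{1\} $ and has order $ d $, so $ d $ must coincide with one of the two realised orders. A short divisibility argument then forces one of three configurations: (i) every non-trivial element of $ N $ has a common prime order $ p $; (ii) the two orders are distinct primes $ p<q $; or (iii) the two orders are $ p $ and $ p^{2} $ for a single prime $ p $.

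In configurations (i) and (iii) the group $ N $ is a $ p $-group. The crucial observation is that every normal subgroup of $ G $ contained in $ N $ is a union of $ G $-classes of $ N $ containing $ 1 $, so there are at most four such subgroups, namely $ 1 $, $ \{1\}\cup C_{1} $, $ \{1\}\cup C_{2} $ and $ N $. Applying this to the characteristic subgroups $ Z(N) $ and $ N' $ and to the lower central series of $ N $ forces the nilpotency class of $ N $ to be at most $ 2 $. When $ N $ is abelian, configuration (i) makes it elementary abelian of exponent $ p $, yielding (a) if $ p $ is odd and (b) if $ p=2 $. When $ N $ is non-abelian, one has $ N'\cap Z(N)\neq 1 $, and this intersection is itself $ G $-invariant, which together with the restricted list above pins down $ N'\subseteq Z(N) $. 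Hence $ N''=1 $, and $ N $ is a metabelian $ p $-group, yielding (b).

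For configuration (ii), $ N $ is a group in which every element has order $ 1 $, $ p $, or $ q $. Let $ P $ and $ Q $ be Sylow $ p $- and $ q $-subgroups of $ N $. Since $ P\setminus\{1\} $ and $ Q\setminus\{1\} $ are unions of $ G $-classes split by element order, each is a single $ G $-orbit; hence $ P $ and $ Q $ are elementary abelian and all Sylow subgroups of each type are $ G $-conjugate. The absence of elements of order $ pq $, combined with these orbit constraints, forces one Sylow, say $ P $, to be normal in $ N $ with the other acting fixed-point-freely, so $ N $ is a Frobenius group with kernel $ P $. Finally, the complement is an elementary abelian $ q $-group that is also a Frobenius complement, hence has cyclic Sylow subgroups; it must therefore be cyclic of prime order $ q $, giving (c).

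The main obstacle is configuration (ii). Carefully exploiting the scarcity of $ G $-orbits to force one Sylow to be normal, and then verifying that the induced action on the kernel is fixed-point-free, is the technical heart of the argument; it will rely on a combination of Sylow theory, the fact that every element of $ N $ has prime power order, and the orbit constraints coming from having only two non-trivial $ G $-classes in $ N\setminus\{1\} $. The $ p $-group cases (i) and (iii) are more mechanical once the limited chain of normal subgroups in $ N $ is in hand.
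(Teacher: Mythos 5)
First, note that the paper does not prove this lemma at all: it is quoted directly from Shahryari and Shahabi \cite{SS98}, so your attempt can only be measured against that source rather than against an argument in the text. Your reduction to the three configurations of element orders is correct, and your treatment of the $p$-group configurations (i) and (iii) is sound: $Z(N)$, $N'$ and the terms of the lower central series are characteristic in $N$, hence normal in $G$, hence each equals one of $1$, $\{1\}\cup C_1$, $\{1\}\cup C_2$, $N$; since $N'\cap Z(N)\neq 1$ is also $G$-invariant and so contains some $\{1\}\cup C_i$, one gets $N'=Z(N)=\{1\}\cup C_i$, so $N$ has class at most $2$ and is metabelian, as you claim.

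The gap is in configuration (ii), and it sits exactly where you yourself locate ``the technical heart.'' The assertion that $P\setminus\{1\}$ and $Q\setminus\{1\}$ are unions of $G$-classes is false unless $P$ and $Q$ are already normal: a non-normal Sylow $p$-subgroup of $N$ meets the class $C_1$ of order-$p$ elements without containing it, so no orbit constraint applies to it, and the subsequent deductions (elementary abelian Sylow subgroups, one Sylow normal) presuppose the normality you are trying to establish. The correct route is through $N'$ again: since every non-trivial element of $N$ has order $p$ or $q$, Cauchy gives $|N|=p^aq^b$, so $N$ is solvable by Burnside's theorem; $N$ is non-abelian (an abelian $N$ would contain an element of order $pq$), so $1\neq N'\neq N$, and $N'$, being normal in $G$, must equal $\{1\}\cup C_i$ for some $i$. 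That set is precisely the set of elements of order dividing one of the two primes, i.e.\ the unique Sylow subgroup $K$ of $N$ for that prime, and it is elementary abelian because $G$ is transitive on $K\setminus\{1\}$ (a non-abelian $K$ would have $Z(K)$ as a proper non-trivial $G$-invariant subgroup). The absence of elements of order $pq$ then gives $C_N(x)\subseteq K$ for all $1\neq x\in K$, so $N$ is Frobenius with kernel $K$; a Frobenius complement of prime exponent $r$ has cyclic Sylow subgroups (generalized quaternion being excluded by exponent $2$), hence is cyclic of order $r$. With this replacement your outline becomes a complete proof.
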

\section{Finite groups such that $ |\cdc(G)|=2 $}\label{cdc2}

\subsection{Groups with less than or equal to three character degrees.}

The objective of this section is to generalize T. Sakurai's theorem. Our proof exhibits the abundance of root of unity elements in some factor groups of groups with four character values. 

\begin{lemma} \cite[Lemma 4.2]{Sak21}\label{4charactervalueslemma}
Let $ G $ be a finite non-abelian group such that $ |\cdc(G)|=2 $ with $ \cd(G)=\{1, m\} $. Then the following holds: 
\item[(i)] $ G/G' $ is a non-trivial elementary abelian $ 2 $-group;
\item[(ii)] $ |C_{G}(t)|= |G:G'| $ for every involution $ t $ of $ G $;
\item[(iii)] $ m=2^{n} $ for some positive integer $ n $.
\end{lemma}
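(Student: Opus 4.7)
The plan is to exploit the strong constraint that $\cv(G)$ has exactly four elements in a highly prescribed form. Since $G$ is non-abelian, Lemma \ref{lem}(c) gives $0\in\ncv(G)\subseteq\cdc(G)$, so $\cdc(G)=\{0,a\}$ for some $a$. First I would argue that $a$ is a negative integer: if any irrational value appeared in $\cv(G)$, Lemma \ref{lem}(f) would produce a second, unequal irrational value of the same character, leaving no room inside $\cdc(G)=\{0,a\}$ alongside $0$; so $a\in\mathbb{Q}$, and since character values are algebraic integers, $a\in\mathbb{Z}$; as $a\notin\mathbb{N}$ and $a\neq 0$, we must have $a<0$. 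Thus $\cv(G)=\{1,m,0,a\}\subset\mathbb{Z}$, a key input for everything that follows.

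For (i), the linear character values are roots of unity, and the only roots of unity that can lie in $\cv(G)$ are $1$ and possibly $-1$ (the latter precisely when $a=-1$), since $m\geqslant 2$ and $0$ are not roots of unity. I would first rule out $G=G'$: if the trivial character were the only linear character, then column orthogonality
\[
\sum_{\chi\in\Irr(G)}\chi(1)\chi(g)=0 \qquad (g\neq 1)
\]
would become $1+m\cdot S=0$ with $S\in\mathbb{Z}$ (every non-linear value lies in $\{1,m,0,a\}\subset\mathbb{Z}$), forcing $m\mid 1$ and contradicting $m\geqslant 2$. Hence $G/G'\neq 1$, any non-trivial linear character must attain a non-trivial root of unity value, and the only candidate in $\cv(G)$ is $-1$; this pins down $a=-1$ and shows that every linear character takes values in $\{\pm 1\}$, so $G/G'$ has exponent dividing $2$ and is a non-trivial elementary abelian $2$-group.

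For (iii) and (ii), I would apply the two column orthogonality identities to two different types of elements. If $G$ is a $2$-group then (iii) is immediate; otherwise pick $g$ of odd prime order. Since $G/G'$ is a $2$-group, $g\in G'$, every linear character sends $g$ to $1$, and the orthogonality relation $\sum_{\chi}\chi(1)\chi(g)=0$ becomes $|G/G'|+m\cdot S_g=0$ with $S_g\in\mathbb{Z}$, forcing $m\mid|G/G'|$; since $|G/G'|$ is a $2$-power so is $m$, proving (iii). For (ii), $m$ is now even, so for an involution $t$ (which exists since $|G|$ is even) and any non-linear $\chi$, the congruence $\chi(t)\equiv\chi(1)=m\equiv 0\pmod 2$, together with $\chi(t)\in\cv(G)=\{-1,0,1,m\}$ and the fact that $-m\notin\cv(G)$ (as $m\geqslant 2$), forces $\chi(t)\in\{0,m\}$, with $\chi(t)=m$ exactly when $t\in\ker\chi$. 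The identity $\sum_{\chi}\chi(1)\chi(t)=0$ then reads $\sigma+m^2 k_t=0$, where $\sigma\in\{0,|G/G'|\}$ according as $t\in G'$ or not, and $k_t$ counts non-linear $\chi$ with $t\in\ker\chi$; non-negativity of both terms forces $t\notin G'$ and $k_t=0$, so $\chi(t)=0$ for every non-linear $\chi$, and $\sum_{\chi}|\chi(t)|^2=|C_G(t)|$ yields $|C_G(t)|=|G/G'|$, establishing (ii). The main obstacle is the elimination of the perfect case in (i); everything else flows from the integrality of character values in $\cv(G)$ and careful bookkeeping in the orthogonality sums.
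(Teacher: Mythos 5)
Your proposal is correct. Note that the paper itself gives no proof of this lemma --- it is quoted verbatim from \cite[Lemma 4.2]{Sak21} --- so there is nothing internal to compare against; but your argument is a valid, self-contained derivation in the same elementary spirit: establish $\cv(G)=\{1,m,0,-1\}\subset\mathbb{Z}$ via Lemma \ref{lem}(f) and integrality, then extract (i)--(iii) from the two column orthogonality relations. The only blemish is the unjustified assertion ``$a\notin\mathbb{N}$'' in the opening paragraph (a priori $\cdc(G)$ could contain a positive integer that is not a character degree); this should be replaced by an appeal to Lemma \ref{lem}(d), or simply dropped, since your later argument --- that a non-trivial linear character must take a non-trivial root-of-unity value, which can only be $a$, forcing $a=-1$ --- makes the claim superfluous. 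Everything else (the elimination of $G=G'$ by $m\mid 1$, the divisibility $m\mid|G{:}G'|$ from an element of odd order, and the parity argument $\chi(t)\equiv\chi(1)\pmod 2$ combined with $\sigma+m^{2}k_{t}=0$ to get $\chi(t)=0$ and hence $|C_{G}(t)|=|G{:}G'|$) checks out.
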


\begin{theorem}\label{4charactervalues}
Let $ G $ be a finite group such that $ \dl(G) =2 $. Then $ |\cdc(G)|=2 $ if and only if $ G $ is a Frobenius group with a Frobenius kernel which is an elementary abelian $ 3 $-group and complement of order $ 2 $. In this case $ |\cd(G)|=2 $.
\end{theorem}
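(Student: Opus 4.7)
The backward direction is a short direct calculation on $G = (C_3)^n \rtimes C_2$ with the complement inverting the kernel: $\cd(G) = \{1,2\}$ and $\cv(G) = \{1,-1,2,0\}$, so $\cdc(G) = \{0,-1\}$.

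For the forward direction, assume $G$ is metabelian with $|\cdc(G)| = 2$. The plan proceeds in three steps. First, I would pin down $\cdc(G)$ exactly. Non-abelianness yields $0 \in \cdc(G)$ via Lemma~\ref{lem}(c), and Lemma~\ref{lem}(f) forbids any irrational character value (two distinct irrationals would overflow $\cdc(G) \setminus \{0\}$), so $G$ is rational. Linear characters are then $\pm 1$-valued, so $G/G'$ is elementary abelian of exponent $2$; it is non-trivial since $G$ is non-abelian metabelian, and a non-trivial linear character furnishes $-1 \in \cv(G)$. Hence $\cdc(G) = \{0,-1\}$ and $\cv(G) \subseteq \mathbb{N} \cup \{0,-1\}$, and Ito's theorem then makes every character degree a power of $2$.

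Second, I would analyse $A := G'$. Rationality forces $N_G(\langle g\rangle)/C_G(\langle g\rangle) \cong \Aut(\langle g\rangle) = C_{p-1}$ for each $g \in A$ of prime order $p$, and this group embeds in the elementary abelian $2$-group $G/A$, so $p \in \{2,3\}$; a parallel argument rules out elements of order $9$ in $A$, leaving the Sylow $3$-part $A_3$ elementary abelian. The factor $G/A_3$ is therefore a $2$-group, and $|\ncv(G/A_3)| \leqslant |\ncv(G)| \leqslant 2$ together with Lemma~\ref{nilpotentvalues}(b) forbids $G/A_3$ from being non-abelian; hence $A_2 = (G/A_3)' = 1$ and $A = A_3$. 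Schur--Zassenhaus then writes $G = A_3 \rtimes P$ with $P \cong G/G'$ elementary abelian $2$-group.

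The main obstacle is the final step, where $P = C_2$ and the action on $A_3$ must be extracted. I would apply Clifford theory: for each non-trivial $\lambda \in \Irr(A_3)$, coprimality extends $\lambda$ to $\tilde\lambda$ on $T_\lambda = I_G(\lambda)$, and the irreducible constituents of $G$ above $\lambda$ are $\chi_\phi = (\tilde\lambda\phi)^G$ with $\phi \in \Irr(Q_\lambda)$, where $Q_\lambda := T_\lambda/A_3 \leqslant P$. Since $P$ is abelian and $T_\lambda \lhd G$, the induced-character formula evaluated at an involution $p \in Q_\lambda$ collapses to $\chi_\phi(p) = [P:Q_\lambda]\,\tilde\lambda(p)\phi(p)$; assuming $Q_\lambda \neq 1$, I would choose $\phi$ with $\phi(p) = -\tilde\lambda(p)$ to obtain $\chi_\phi(p) = -\chi_\phi(1) \leqslant -2$, violating $\cv(G) \subseteq \mathbb{N} \cup \{0,-1\}$. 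So $T_\lambda = A_3$ for every non-trivial $\lambda$, $P$ acts freely on $\Irr(A_3) \setminus \{1\}$—equivalently on $A_3 \setminus \{1\}$—and $G$ is Frobenius. Two distinct involutions of $P$ would each invert $A_3$ and their product would then act trivially, contradicting fixed-point-freeness; hence $P \cong C_2$ inverts $A_3$ and $\cd(G) = \{1,2\}$, as required.
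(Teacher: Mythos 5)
Your argument is correct, but it follows a genuinely different route from the paper's. The paper gets $G/G'$ elementary abelian of exponent $2$ by citing Sakurai, reduces to $G'$ of odd order by ad hoc direct-product arguments, and then---this is its crucial step---shows that a $q$-element in a minimal normal section of $G'$ is a root of unity element and invokes the Lewis--Morotti--Tong-Viet classification (Lemma~\ref{rootofunity}) to force the relevant quotient to be $S_{3}$, finishing with an inversion argument and the exclusion of a dihedral quotient of order $18$ to get elementary abelianness. You instead deduce rationality of $G$ directly from Lemma~\ref{lem}(f), use the standard $N_{G}(\langle g\rangle)/C_{G}(\langle g\rangle)\cong\Aut(\langle g\rangle)$ characterization of rational groups to bound the primes and the exponent of $G'$ in one stroke, and then run Clifford theory: since $T_{\lambda}\normeq G$ and coset representatives can be chosen inside the abelian complement $P$ (which centralizes the involution $p$), the induction formula really does collapse to $\chi_{\phi}(p)=[P{:}Q_{\lambda}]\,\tilde\lambda(p)\phi(p)$, and your choice of $\phi$ produces a value $\leqslant -2$ whenever $Q_{\lambda}\neq 1$. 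This buys a more self-contained proof (no appeal to \cite{LMT23} or \cite{Sak21}) and obtains the Frobenius structure, the triviality of $O_{2}(G)$, and the elementary abelianness of the kernel from a single mechanism, whereas the paper's route exercises the root-of-unity-element machinery it reuses elsewhere (e.g.\ in Lemma~\ref{coprimerootofunity} and Theorem~\ref{3valuesdl2}). Three small points should be made explicit: $[P{:}Q_{\lambda}]\geqslant 2$ because $T_{\lambda}=G$ would make $\tilde\lambda\phi$ a linear character of $G$ nontrivial on $G'$; since $p^{2}=1$ one has $\tilde\lambda(p)=\pm1$, so the required $\phi$ with $\phi(p)=-\tilde\lambda(p)$ exists in the elementary abelian $2$-group $\Irr(Q_{\lambda})$; and passing from freeness on $\Irr(A_{3})\setminus\{1\}$ to freeness on $A_{3}\setminus\{1\}$ uses Brauer's permutation lemma (or the observation that $h$ fixing $1\neq a\in A_{3}$ yields a nontrivial $h$-fixed character inflated from $A_{3}/[A_{3},h]$). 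The $N/C$ characterization of rational groups is a standard fact not recorded in the paper's preliminaries, so a citation would be needed.
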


\begin{proof}
If $ G $ is a Frobenius group with a Frobenius kernel which is an elementary abelian $ 3 $-group and whose complement is of order $ 2 $, then $ |\cd(G)|=2 $ and $ |\cdc(G)|=2 $. 

Suppose that $ \dl(G)=2 $ and $ |\cdc(G)|)= 2 $. Note that $ G/G' $ is a non-trivial elementary abelian $ 2 $-group by \cite[Proposition 2.3]{Sak21}, $ G' $ is abelian and $ 0, -1\in \cdc(G) $. Let $ G'=O_{2}(G)\times H $, where $ H $ is the abelian Hall $ 2' $-subgroup of $ G $. Let $ P $ be a Sylow $ 2 $-subgroup of $ G $. Then $ G/H\cong P $. If $ P $ is non-abelian, then $ 3\leqslant |\ncv(P)|\leqslant |\cdc(G)| $ by Lemma \ref{nilpotentvalues}(b). We may assume that $ P $ is elementary abelian and so $ G'=H $. If $ O_{2}(G)\not= 1 $ and $ P=P_{1}\times O_{2}(G) $, then $ G=O_{2}(G)\times (H\rtimes P_{1}) $. Using Lemma \ref{lem}(e), we have that $ -\chi(1)\in \ncv(G) $, for some non-linear $ \chi \in \Irr(G) $, a contradiction. Hence we may assume that $ O_{2}(G)= 1 $. Note that $ G' $ is a direct product of abelian Sylow subgroups. Suppose that $ q $ divides $ |G'| $, where $ q $ is an odd prime. Let $ M $ be a normal subgroup of $ G $ such that $ G'/M=\overline{G}' $ is a minimal normal subgroup of $ G/M=\overline{G} $ and $ q $ divides $ |\overline{G}'| $. Suppose that $ \overline{g}\in \overline{G}' $ is a $ q $-element. Then $ \overline{g}\notin \ker \chi $ for every non-linear $ \chi \in \Irr(\overline{G}) $ and every non-linear irreducible character of $ \overline{G} $ is faithful. Note that every non-linear character of $ \overline{G} $ is of $ 2 $-power degree. By \cite[Theorem 3.13]{Isa06}, all these characters are of the same degree. In particular, $ \cd(\overline{G})=\{ 1, m\} $ and so $ \cv(\overline{G})=\{ 1, -1, 0, m\} $. By Lemma \ref{prootofunityelement}, $ \chi(\overline{g})\in \{ 1, -1\} $ for all $ \chi \in \Irr(\overline{G}) $, that is, $ \overline{g} $ is a root of unity element of $ \overline{G} $. Using Lemma \ref{rootofunity}, we have that $ \overline{G}/Z(\overline{G})\cong \Gamma_{q_{1}} \times \cdots \times \Gamma_{q_{m}} $, where each $ q_{i} > 2 $ is a prime power and $ m\geqslant 1 $ is an integer. Hence $ q_{1}q_{2}\cdots q_{m}=3 $ and so $ \overline{G}/Z(\overline{G})\cong \mathrm{S}_{3} $, which means that $ \overline{g} $ is a $ 3 $-element. This means that $ |P|=2 $. Hence $ \cv(G)=\{1, -1, 0, 2\}  $.  Note that $ \overline{G}'\cap Z(\overline{G})=1 $ and so $ Z(\overline{G})=1 $ using Lemma \ref{4charactervalueslemma}(ii). Since $ M $ is arbitrary, $ G' $ is a $ 3 $-group. Let $ t $ be an involution of $ G $. If $ C_{G}(t) $ is not a $ 2 $-group, then there is a $ 3 $-element $ g\in C_{G}(t) $ which means that $ \langle g \rangle\in Z(G) $ and so $ G=Z(G)\times K $ for some subgroup $ K $ of $ G $, a contradiction. We may assume that $ C_{G}(t) $ is a $ 2 $-group and so $ t $ acts fixed point freely on $ G' $. From \cite[Proposition 16.9(e)]{Hup98}, it follows that every element of $ G' $ is inverted by $ t $. Let $ g\in G' $ be of maximal order. If $ |\langle g \rangle |=k $, $ k\geqslant 9 $, then there exists a normal subgroup $ N $ of $ G $ such that $ G/N $ is a non-abelian group of order $ 18 $. Hence $ G/N \cong D_{9}$ and so $ |\ncv(G)|\geqslant|\ncv(G/N)| \geqslant 5 $. Therefore $ G' $ is an elementary abelian $ 3 $-group. By Lemma \ref{4charactervalueslemma}(ii), $ C_{G}(t)\subseteq P $ and so $ G $ is a Frobenius group with Frobenius kernel $ G' $ and complement of order $ 2 $. The result follows. 
\end{proof}

\begin{theorem}\label{3,5} Let $ G $ be a finite non-abelian group such that $ |\cd(G)|=3 $. Then $ |\cdc(G)|=2 $ if and only if $ G\cong S_{4} $.
\end{theorem}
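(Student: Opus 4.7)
\emph{Proof plan.} The converse direction is immediate since $\cd(S_4)=\{1,2,3\}$ and $\cv(S_4)=\{-1,0,1,2,3\}$, so $|\cdc(S_4)|=2$. For the forward direction, assume $|\cd(G)|=3$ and $|\cdc(G)|=2$. Since $|\cv(G)|=5<8$, \cite[Theorem~1.1]{Mad22} yields that $G$ is solvable. Lemma~\ref{lem}(c),(d) gives $\{-1,0\}\subseteq\cdc(G)$, forcing $\cdc(G)=\{-1,0\}$, so $G$ is a rational group; in particular $G/G'$ is a non-trivial elementary abelian $2$-group. Write $\cd(G)=\{1,m_1,m_2\}$ with $m_1<m_2$.

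If $\dl(G)\leq 2$, Theorem~\ref{4charactervalues} would force $|\cd(G)|=2$, a contradiction. Hence $\dl(G)\geq 3$ and $G''\neq 1$. Since $G$ is solvable and non-abelian, $G'/G''\neq 1$, so $G/G''$ is non-abelian with $\dl(G/G'')=2$. By Lemma~\ref{lem}(a) and the non-abelianness of $G/G''$ we get $|\cdc(G/G'')|=2$, so Theorem~\ref{4charactervalues} applies: $G/G''\cong V\rtimes C_2$ is a Frobenius group with kernel $V=G'/G''$ an elementary abelian $3$-group of some rank $r\ge 1$ and complement of order $2$. Consequently $|G:G'|=2$, $\cd(G/G'')=\{1,2\}$, and $\cv(G/G'')=\{-1,0,1,2\}$, so $m_1=2$, $\cd(G)=\{1,2,m\}$ with $m\ge 3$, and $\cv(G)=\{-1,0,1,2,m\}$.

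The remaining task is to pin down $r=1$, $|G''|=4$, and $m=3$, after which the identification $G\cong S_4$ follows from the uniqueness of the rational extension of $S_3$ by $V_4$ with the prescribed character data. The plan has three stages. First, I would show $G''$ is a $2$-group: let $N\trianglelefteq G$ be a minimal normal subgroup contained in $G''$, which is an elementary abelian $p$-group. Applying Lemma~\ref{prootofunityelement} to $p$-elements of $N$ against $\cv(G)=\{-1,0,1,2,m\}$ restricts $p\in\{2,3\}$; the case $p=3$ is then ruled out using Lemma~\ref{coprimerootofunity}, since the involution of $G$ already inverts $V=G'/G''$ fixed-point-freely, and an additional $3$-subgroup in $G''$ would produce either the forbidden value $-2$ or an irrational value in $\cv(G)$. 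Second, for $r=1$: for non-linear $\chi\in\Irr(G)$ of degree $2$, Lemma~\ref{charactercenter} gives $\chi(z)=2\lambda(z)$ on $Z(\chi)$; since $\cv(G)\subseteq\mathbb Z$, $-2\notin\cv(G)$, and $m\neq\pm 2$, we must have $\lambda=1$, so $Z(\chi)=\ker\chi$ and $G/\ker\chi$ is a centreless rational finite subgroup of $\mathrm{PGL}_2(\mathbb C)$. The short list (essentially $S_3$, $D_4$, $D_6$, $S_4$), combined with a count of degree-$2$ characters of $G$ against the $(3^r-1)/2$ inflated from $G/G''$, together with the class equation $|G|=2+4a+m^2b$, forces $r=1$ and $G/G''\cong S_3$. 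Third, by minimality of a counterexample we may assume $\dl(G)=3$, so $G''$ is abelian; then Ito's theorem applied to $G''\trianglelefteq G$ gives $m\mid[G:G'']=6$, whence $m=3$. Substituting into $|G|=2+4a+9b=12|G''|$ with $a=1$ pins down $b=2$, $|G''|=4$, so $G''\cong V_4$ and $G\cong S_4$.

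The main obstacle is the rank reduction step ($r=1$), where one must carefully exploit rationality and the scarcity of degree-$2$ characters in $\cv(G)$ against what larger $V$ would generate; the remaining pieces are more routine bookkeeping.
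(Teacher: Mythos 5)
Your opening reduction agrees with the paper's: both directions of the easy implication, solvability, $\cdc(G)=\{0,-1\}$, rationality, $G/G'$ elementary abelian $2$, and the passage to $G/G''$ via Theorem~\ref{4charactervalues} to force $2\in\cd(G)$ and the Frobenius structure $C_3^r\rtimes C_2$ on $G/G''$. From that point on, however, your argument is a plan rather than a proof, and the three stages you outline each have a genuine gap. (i) In stage one, Lemma~\ref{prootofunityelement} only restricts the prime $p$ of a minimal normal subgroup $N\leqslant G''$ to $\{2,3\}$ if you know a character of degree $2$ is faithful on $N$; if $N$ is only detected by characters of degree $m$, the lemma gives $p\mid m^2-a$ with $a\in\{0,1,4\}$, which does not confine $p$ to $\{2,3\}$. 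The subsequent sentence ruling out $p=3$ (``would produce either the forbidden value $-2$ or an irrational value'') is asserted, not derived. (ii) The rank reduction $r=1$, which you yourself flag as the main obstacle, is the crux and is not established: the count of degree-$2$ characters gives exactly $(3^r-1)/2$ of them, so the class equation reads $|G|=2\cdot 3^r+m^2b$ with $|G|=2\cdot 3^r|G''|$, and for $r=2$, $m=3$ this has solutions $b=2(|G''|-1)$ with no visible contradiction. (iii) In stage three, Ito gives $m\mid 6$, hence $m\in\{3,6\}$, and you silently discard $m=6$; this case must be killed (e.g.\ by the class equation modulo $4$, or as the paper does). Also, your ``short list'' of centreless rational quotients with a faithful degree-$2$ character is off ($D_4$ and $D_6$ have nontrivial centre, $S_4$ has no faithful degree-$2$ representation), though the conclusion $G/\ker\chi\cong S_3$ that you actually need is correct.

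For comparison, the paper avoids your three stages entirely by splitting on whether the two non-linear degrees are coprime. When they are not, Thompson's normal $p$-complement theorem \cite[Corollary 12.2]{Isa06} produces a normal $2$-complement and a Frobenius group with abelian kernel, contradicting $|\cd(G)|=3$; this is what disposes of your missing $m=6$ configuration. When they are coprime, Ito forces the second degree to be a power of $3$, and the rank reduction you could not complete is done by citing \cite[Theorem A]{IMNT09} (exactly one degree divisible by $3$ forces $Q/\OO_3(G)$ cyclic of order $3$), after which restriction of odd-degree characters to $G'$, a Frobenius analysis of $G'$ with complement $C_3$, and \cite[Theorem 31.3(2)]{Hup98} identify $G''$ as elementary abelian of order $4$ and hence $G\cong S_4$. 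If you want to salvage your approach, the step you must actually supply is a replacement for \cite[Theorem A]{IMNT09} in forcing $r=1$; the counting argument as sketched does not do it.
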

\begin{proof}
Suppose that $ \cd(G)=\{1, m, n\} $. It follows that $ G/G' $ is an elementary abelian $ 2 $-group and so $ \cv(G)=\{1, m, n, 0, -1\} $. Note also that $ \dl(G)\leqslant 3 $, $ G'' $ is abelian and $ G/G'' $ is metabelian. By Lemma \ref{nilpotentvalues}(b), $ G $ is not nilpotent. We have that for all $\chi\in \Irr(G/G'') $, $ \chi(1) $ divides $ |G:G'| $ by Ito's theorem \cite[Theorem 6.15]{Isa06}. This means that $ m $ is a power of $ 2 $. Let $ P $ be a Sylow $ 2 $-subgroup.

Suppose first that $ \gcd(m, n)\not= 1 $. This means $ 2 $ divides $ n $. By Thompson's theorem \cite[Corollary 12.2]{Isa06}, $ G $ has a normal nilpotent $ 2 $-complement $ N $. If $ P  $ is non-abelian, then $ -m\in \ncv(P)\subseteq \ncv(G) $ by Lemma \ref{nilpotentvalues}(a), a contradiction.
 Hence $ P $ is elementary abelian. We have that $ \dl(G/G'')=2 $ and $ |\cdc(G/G'')|=2 $ and so by Theorem \ref{4charactervalues}, $ G/G'' $ is a Frobenius group whose kernel is an elementary abelian $ 3 $-group and has a complement of order $ 2 $. In particular, $ |P|=2 $. Since every non-linear irreducible character is divisible by $ 2 $, it is of $ 2 $-defect zero. If $ t \in P $ is an involution, then every non-linear $ \chi \in \Irr(G) $ vanishes on $ t $. This means that $ C_{G}(t)\subseteq P $. Hence $ G $ is a Frobenius group with a complement of order $ 2 $ by \cite[Theorem ]{Isa08}. Since a complement of $ G $ is of even order, we have that the Frobenius kernel is abelian. In this case, $ |\cd(G)|=2 $, a contradiction.  
 
We may assume that $ \gcd(m, n)=1 $. Then $ \cdc(G/G'')=\{0, -1\} $. By Sakurai's theorem \cite[Theorem]{Sak21}, $ G/G''\cong C_{3}^{*}\rtimes C_{2} $ and so $ G/G' $ is cyclic of order $ 2 $. Then $ P/O_{2}(G) $ is cyclic of order $ 2 $. Note that $ n $ divides $ |G:G''| $ by Ito's theorem and so $ n $ is a power of $ 3 $. Let $ Q $ be a Sylow $ 3 $-subgroup of $ G $. By \cite[Theorem A]{IMNT09}, $ Q/O_{3}(G)\cong Q/Q' $ is cyclic of order $ 3 $. Consider the abelian group $ T=O_{3}(G)O_{3'}(G) $. Then $ G/G'\cong C_{2} $, $ G'/T\cong C_{3} $ and $ G/T\cong S_{3} $. This means that $ T=G'' $ and $ n=3 $. Let $ \theta \in \Irr(G') $ be non-linear. Let $ \chi \in \Irr(G) $ be an irreducible constituent of $ \theta ^{G} $. Then $ \chi _{G'} \in \Irr(G') $ since $ \gcd(\chi(1), |G:G'|)=1 $. Hence for every non-linear $ \theta \in \Irr(G') $, there exists $ \chi \in \Irr(G) $ such that $ \chi_{G'}=\theta $. This means that any character value of $ G' $ from a non-linear character is also a character value of $ G $. Suppose that $ Q $ is non-abelian. Then $ G'/O_{3'}(G)\cong Q $, a $ 3 $-group. By Lemma \ref{nilpotentvalues},(a) $ 3\epsilon \in \cdc(G) $ for some cube root of unity $ \epsilon \not= 1 $, a contradiction. Therefore $ Q $ is abelian. If $ Q $ is not elementary abelian, then $ G/O_{3'}(G)\cong Q\rtimes C_{2} $ such that $ |\cv(Q\rtimes C_{2})|> 4 $ by Sakurai's theorem \cite[Theorem]{Sak21}. Note that $ \cd(Q\rtimes C_{2})=\{1,2\} $ and so any other possible character values of $ Q\rtimes C_{2} $ are not positive integers. Hence $ \cdc(G)\geqslant 3 $, a contradiction. Hence $ Q $ is elementary abelian and $ G'=O_{3}(G)\times (O_{3'}(G)\rtimes C_{3}) $. This means that $ 3\epsilon \in \ncv(G') \subseteq \cdc(G) $ for some cube root of unity $ \epsilon \not= 1 $, a contradiction. Hence $ G'=O_{3'}(G)\rtimes C_{3} $. Note that $ G' $ has exactly three linear characters. Consider $ 1\not= c\in C_{3} $. Every non-linear irreducible character of $ G' $ vanishes on $ c $. Then $ C_{G'}(c)=\sum _{\theta \in \Irr(G')}|\theta(c)|^{2}=1^{2} + 1^{2} + 1^{2}= 3 $. This means that $ C_{G'}(c)\subseteq C_{3} $ for all $ 1\not= c\in C_{3} $, that is, $ G' $ is a Frobenius group with a complement of order $ 3 $. Note that $ \cd(G)=\{ 1, 2, 3\} $ and $ G=(O_{3'}(G)\rtimes C_{3})\rtimes C_{2} $. By \cite[Theorem 31.3(2)]{Hup98}, $ A=O_{3'}(G) $ is an elementary abelian $ 2 $-group. In particular, $ |O_{3'}(G)|=4 $. Therefore $ G\cong S_{4} $ as required.
\end{proof}

\subsection{Groups whose derived length is three} 
\begin{theorem}\label{derivedlength3}
Let $ G $ be a finite group such that $ \dl(G) =3 $. Then $ |\cdc(G)|=2 $ if and only if $ G\cong S_{4} $.
\end{theorem}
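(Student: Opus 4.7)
The reverse implication is immediate: $S_{4}$ satisfies $\dl(S_{4}) = 3$ and $\cv(S_{4})\setminus\cd(S_{4}) = \{0,-1\}$, so $|\cdc(S_{4})|=2$.

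For the forward direction I assume $\dl(G)=3$ and $|\cdc(G)|=2$, and aim to reduce to Theorem \ref{3,5} by showing $|\cd(G)|=3$. Since $\dl(G)=3$, $G$ is solvable and non-abelian, so $0\in\cdc(G)$ by Lemma \ref{lem}(c). The argument of Proposition 2.3 of \cite{Sak21} (used also in the proof of Theorem \ref{4charactervalues}) forces $G/G'$ to be a non-trivial elementary abelian $2$-group, so $-1\in\cdc(G)$ and hence $\cdc(G)=\{0,-1\}$. In particular $\cv(G)\subseteq\mathbb{N}\cup\{0,-1\}$, so $G$ is a rational group. By Taketa's inequality $|\cd(G)|\geq\dl(G)=3$, and Sakurai's theorem rules out $|\cd(G)|=2$ since that case would force $\dl(G)=2$.

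To bound $|\cd(G)|$ from above I would analyze $G/G''$, which has derived length $2$ (as $G''$ is non-trivial and abelian), inherits rationality from $G$, and satisfies $\ncv(G/G'')\subseteq\ncv(G)=\{0,-1\}$ by Lemma \ref{lem}(a). Since $G/G''$ is non-abelian and rational, Lemma \ref{lem}(c) and (d) give $\ncv(G/G'')=\{0,-1\}$. A key structural consequence is that for every non-linear $\chi\in\Irr(G/G'')$, $Z(\chi)=\ker\chi$: otherwise Lemma \ref{charactercenter}(a) would produce $\chi(z)=\chi(1)\lambda(z)$ for some $z\in Z(\chi)\setminus\ker\chi$ with $\lambda(z)$ a non-trivial root of unity, giving either $-\chi(1)\in\ncv(G/G'')$ with $\chi(1)>1$ or an irrational value, both contradicting $\ncv(G/G'')=\{0,-1\}$. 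This rigidity blocks extra positive-integer values in $\cdc(G/G'')$, yielding $|\cdc(G/G'')|=2$. Applying Theorem \ref{4charactervalues} to $G/G''$ gives $G/G''\cong C_{3}^{e}\rtimes C_{2}$, a Frobenius group with elementary abelian $3$-kernel and involution complement.

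Finally, Ito's theorem applied to the abelian normal subgroup $G''$ forces $\chi(1)\mid |G{:}G''|=2\cdot 3^{e}$ for every $\chi\in\Irr(G)$. Decomposing $\Irr(G)$ via Clifford theory over $G''$ separates characters lifted from $G/G''$ (degrees $\{1,2\}$) from those induced from non-trivial $G$-orbits on $\Irr(G'')$, whose degrees have the form $|G{:}T_{\theta}|\cdot d$. Paralleling the final portion of the proof of Theorem \ref{3,5}, the integrality constraint $\cv(G)\subseteq\mathbb{N}\cup\{0,-1\}$ forces $e=1$ and $G''$ to be elementary abelian of order $4$ with $G/G''\cong S_{3}$ acting faithfully on the three non-trivial elements of $G''$; thus $|\cd(G)|=3$ and Theorem \ref{3,5} concludes $G\cong S_{4}$. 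The main obstacle I anticipate is this last step: translating the integrality constraint into the structural rigidity that simultaneously pins down $e$ and the isomorphism type of $G''$, for which the Clifford-theoretic control of orbit sizes and inertia subgroups inherited from the Frobenius structure of $G/G''$ is essential.
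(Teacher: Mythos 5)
Your opening reduction---passing to $G/G''$ and invoking Theorem \ref{4charactervalues} to get $G/G''\cong C_{3}^{*}\rtimes C_{2}$, then aiming to pin down $\cd(G)=\{1,2,3\}$ and finish with Theorem \ref{3,5}---is exactly the skeleton of the paper's proof. But the step you yourself flag as ``the main obstacle'' is where essentially all of the work lies, and your proposal contains no argument for it. After the reduction, Ito's theorem only gives $\chi(1)\mid 2\cdot 3^{r}$ for nonlinear $\chi$, so you must exclude degrees $3^{r}$ with $r\geqslant 2$, $2$-power degrees larger than $2$, and above all a degree divisible by $6$; the sentence ``the integrality constraint forces $e=1$ and $G''$ elementary abelian of order $4$'' is an assertion, not a derivation, and the Clifford-theoretic bookkeeping you sketch does not by itself contradict the existence of a degree $6$. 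The paper's proof spends most of its length precisely here: Thompson's normal-$2$-complement theorem \cite[Corollary 12.2]{Isa06} kills the case where every nonlinear degree is even (that case forces $G$ to be Frobenius with abelian kernel, contradicting $\dl(G)=3$); in the remaining case a character of $3$-power degree restricts irreducibly to $G'$, and \cite[Theorem 3.13]{Isa06} yields that $G'$ is Frobenius with complement $C_{3}$ and $r=1$; finally, if a degree divisible by $6$ survives, rationality of $G$ together with \cite[Corollary 2]{Row} and \cite[Theorem A]{DGD24} forces $G''$ to be a $2$-group, whence $G$ has no element of order $6$, and a degree-$6$ character would vanish on all $3$-elements while satisfying $|\chi(g)|\neq 1$ on $2$-elements outside its kernel (Lemma \ref{prootofunityelement}), so $-1\notin\cv(\chi)$, contradicting Lemma \ref{lem}(d). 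None of these ingredients appear in your outline, so the forward direction is not proved.

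A secondary gap: your justification that $|\cdc(G/G'')|=2$ is unsound as written. Establishing $Z(\chi)=\ker\chi$ for nonlinear $\chi\in\Irr(G/G'')$ only rules out values of the form $\chi(1)\epsilon$; it does not prevent $\chi(g)$ from being a positive integer strictly between $1$ and $\chi(1)$ at some $g\notin Z(\chi)$, and any such value lying in $\cd(G)\setminus\cd(G/G'')$ would enlarge $\cdc(G/G'')$ without enlarging $\cdc(G)$. (The inequality $|\cdc(G/N)|\leqslant|\cdc(G)|$ is not available---the paper records it as an open problem; only $|\ncv(G/N)|\leqslant|\ncv(G)|$ from Lemma \ref{lem}(a) can be used.) This passage needs the degree restrictions, not the $Z(\chi)=\ker\chi$ rigidity, to close.
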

\begin{proof}
Suppose that $ G $ is not isomorphic to $ S_{4} $. First note that $ 0, -1\in \cdc(G) $. Let us first consider $ G/G'' $. Then $ \dl(G/G'')=2 $ and $ |\cdc(G)|=2 $. By Theorem \ref{4charactervalues}, $ G/G''\cong C_{3}^{*}\rtimes C_{2} $. This means that $ 2\in \cd(G) $. We also have that for all non-linear $ \chi \in \Irr(G) $, $ \chi(1) $ divides $ |G:G''| $, that is, $ \chi(1) $ divides $ 2\cdot 3^{r} $ for some integer $ r $. 

If $ 2 $ divides all $ \chi(1) $ for all non-linear $ \chi\in \Irr(G) $, then by Thompson's theorem \cite[Corollary 12.2]{Isa06}, $ G $ has a normal $ 2 $-complement $ H $. Let $ P $ be a Sylow $ 2 $-subgroup of $ G $. Then $ G/H\cong P $. If $ P $ is non-abelian, then $ -2\in \cdc(G) $, a contradiction. Hence $ P $ is elementary abelian. But $ G/G' $ is of order $ 2 $ and so $ |P|=2 $. This means that every non-linear irreducible character is of $ 2 $-defect zero. Let $ t $ be an involution in $ G $. Then every non-linear $ \chi \in \Irr(G) $ vanishes on $ t $. Thus $ C_{G}(t)\subseteq P $ and so $ G $ is a Frobenius group with a Frobenius complement of order 2. This means that $ G $ has an abelian kernel, a contradiction since $ \dl(G)=3 $. 

We may assume that there exists $ \chi\in \Irr(G) $ such that $ \chi(1)=3^{r} $ for some integer $ r $. It follows that $ \chi_{G'}\in \Irr(G') $. Let $ Q $ be a Sylow $ 3 $-group of $ G $. If $ Q $ is non-abelian, then since $ G' $ has a factor group isomorphic to $ Q $, we have a contradiction by Lemma \ref{nilpotentvalues}(a). Hence $ Q $ is abelian. Note that $ G'=O_{3'}(G)\rtimes Q $. If $ Z(G')\not= 1 $, then $ G'=Z(G')\times S $ for some subgroup $ S $ of $ G' $ and so $ \chi=\lambda \times \theta $, where $ \lambda \in \Irr(Z(G')) $ and $ \theta\in \Irr(S) $. Hence there exists $ g\in G $ such that $ \chi(g) \in \ncv(G') \subseteq \ncv(G) $, a contradiction. We may assume that  $ Z(G)=1 $. By \cite[Theorem 3.13]{Isa06}, $ 3^{r} $ is the only character degree of $ G '$ which is a $ 3 $-power and $ |G'/G''|=3^r $. Hence if $ 1\not= s\in Q $, then $ C_{G'}(s) $ is a $ 3 $-group and it follows that $ G' $ is a Frobenius group with a complement $ Q $. Since $ Q $ is abelian and of odd order, $ Q $ is cyclic of order $ 3 $ and $ r=1 $. If $ \cd(G)=\{1, 2, 3\} $, then by Theorem \ref{3,5}, $ G\cong S_{4} $. Hence we may assume that there exists another character degree of $ G $ divisible by $ 6 $ and so the character degree is $ 6 $. 
%Let us consider $ G' $. We have that $ G'=O_{3}(G)\times (K\rtimes Q_{1}) $, where $ Q/O_{3}(G)\cong Q_{1} $ and $ K $ is the Hall $ 3' $-subgroup of $ G' $. Let $ \lambda \in  \Irr(O_{3}(G)) $ such that $ \lambda(z)=\epsilon $ where $ \epsilon \not= 1 $ is a cube root of unity. Let $ \theta \in \Irr(K\rtimes Q_{1}) $ be non-linear. Then $ \lambda \times \theta \in \Irr(G') $ and there exists $ \delta \in \Irr(G) $ such that $ \delta _{G'}\in \Irr(G') $ or $ \delta_{G'}=(\lambda \times \theta) + (\lambda \times \theta)^{t} $ for some $ t\in P\setminus G' $. Then $ \delta (z\times 1)=3\epsilon $ or $  \delta (z\times 1)= \lambda(z)\cdot \theta(1) + \lambda(z^{-1})\cdot \theta(1) = 3(\epsilon + \overline{\epsilon})=-3 $, a contradiction. Hence $ G'=K\rtimes Q $. Since every non-linear irreducible character of $ G' $ vanishes on $ Q\setminus \{ 1\} $, $ C_{G'}(g) \subseteq Q $ for all $ g\in Q $. It follows that $ G' $ is a Frobenius group with an abelian kernel and Frobenius complement $ Q $. Since $ |Q| $ is odd, $ Q $ is cyclic.
 Since $ G $ is a rational group, by \cite[Corollary 2]{Row}, $ K $ is a $ \{2, 5\} $-group. Now using \cite[Theorem A]{DGD24}, we have that $ K $ is a $ 2 $-group. Hence $ \cd(G)=\{1,2,3,6\} $ and $ G $ has normal subgroups $ M $ and $ N $ such that $ G/G'' \cong S_{3} $, $ G'/G''\cong C_{3} $ and $ G' $ is a Frobenius group with Frobenius complement $ C_{3} $ and an abelian kernel $ G'' $ which is a $ 2 $-group.
 
We claim that $ G $ has no element of order $ 6 $. Suppose there is an element $ h $ of order $ 6 $ in $ G $. Consider $ G' $ which is a Frobenius group with a complement $ C_{3} $. Then $ C_{G'}(c)\subseteq C_{3} $ for all $ 1\not=c\in C_{3} $ and so $ \langle h \rangle \cap G''=1 $. Hence $ G=G''\langle h \rangle $ and so $ G/G'' \cong \langle h \rangle $ contradicting the fact that $ G/G'' \cong S_{3} $. Therefore $ G $ contains $ 2 $-elements and $ 3 $-elements only. Consider $ \chi \in \Irr(G) $ such that $ \chi(1)=6 $. Note that $ \chi $ vanishes on $ 3 $-elements. Consider a $ 2 $-element $ g $ in $ G'\setminus \ker \chi $. Then $ \chi(g)\not= \pm 1 $ by Lemma \ref{prootofunityelement}. Therefore $ -1\notin \cv(\chi) $, contradicting Lemma \ref{lem}(d). Hence $ 6\notin \cd(G) $. The result then follows.
\end{proof}

\begin{theorem}\label{4characterdegrees}
Let $ G $ be a solvable group with $ |\cd(G)|=4 $. Then $ |\cdc(G)|\geqslant 3 $. 
\end{theorem}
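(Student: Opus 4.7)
I would argue by contradiction, assuming $|\cdc(G)| \leqslant 2$. Since $G$ is non-abelian (as $|\cd(G)| = 4 > 1$), we have $|\cdc(G)| \geqslant 2$, so $|\cdc(G)| = 2$. By Lemma \ref{lem}(c), $0 \in \ncv(G)$, and by \cite[Proposition 2.3]{Sak21}, $G/G'$ is an elementary abelian $2$-group, which forces $-1 \in \cdc(G)$. Hence $\cdc(G) = \{0, -1\}$, making $G$ a rational group.

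The next step is to bootstrap the structure via the quotient theorems already proved. Applying Theorem \ref{4charactervalues} to $G/G''$, the abelian case would force $G$ itself to be metabelian with $|\cd(G)| = 2$, contradicting the hypothesis, so $G/G'' \cong C_3^* \rtimes C_2$. Applying Theorem \ref{derivedlength3} to $G/G'''$, the $\dl \leqslant 2$ alternative is ruled out by the same reasoning, leaving $G/G''' \cong S_4$. Inspecting the derived series of $S_4$ gives $G'/G'' \cong C_3$ and hence $G/G'' \cong S_3$. In particular, $\{1, 2, 3\} \subseteq \cd(G)$, so $\cd(G) = \{1, 2, 3, m\}$ for some $m > 3$.

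At this point I would constrain $m$ using rationality. By \cite[Theorem A]{DGD24}, the second derived subgroup $G''$ is a $2$-group, so the prime divisors of $|G|$ lie in $\{2, 3\}$, and $|G/G''| = 6$ forces the $3$-part of $|G|$ to equal $3$. Consequently $m = 2^a \cdot 3^b$ with $b \in \{0, 1\}$. I would then mimic the endgame of Theorem \ref{derivedlength3}: since $G/G'' \cong S_3$ and $G'/G''$ acts as a Frobenius complement of order $3$ on $G''$, the exact same argument shows that $G$ has no element of order $6$; every non-identity element of $G$ is a $2$-element or a $3$-element. For any $\chi \in \Irr(G)$ with $\chi(1) = m$, Lemma \ref{prootofunityelement} combined with rationality and $\cv(\chi) \subseteq \{0, \pm 1, 2, 3, m\}$ gives strong value restrictions: on $2$-elements only even integers survive, and on $3$-elements only values $v$ with $v^2 \equiv m^2 \pmod{3}$ survive.

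The main obstacle is handling all choices of $m$. When $6 \mid m$ (so $m \in \{6, 12, 24\}$), the character $\chi$ has $3$-defect zero and vanishes on all non-identity $3$-elements, forcing $\cv(\chi) \subseteq \{0, 2, m\}$; this contradicts Lemma \ref{lem}(d), which demands a negative rational in $\ncv(\chi)$. The remaining pure $2$-power cases $m \in \{4, 8\}$ are the genuinely delicate part, since the $3$-defect-zero trick fails. Here I would use Clifford theory along the chain $G'''\normal G''\normal G' \normal G$: since $G''' \not\leqslant \ker\chi$ (otherwise $m \in \cd(S_4) = \{1,2,3\}$) and $G'''$ is a $2$-group acted on by $G/G''' \cong S_4$, the inertia structure of a non-trivial constituent $\theta$ of $\chi_{G'''}$ together with $[G:G'] = 2$ and $|G'/G''| = 3$ leaves only a few numerical possibilities for $(\chi(1), [G:I_G(\theta)])$; each such possibility can be shown to either recreate the $6 \mid m$ scenario on a subquotient or to force $-1 \notin \cv(\chi)$ directly via the orthogonality relation $\sum_g \chi(g) = 0$ and the restricted value set on $2$- and $3$-elements, yielding the final contradiction.
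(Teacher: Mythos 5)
Your opening reduction --- forcing $\cdc(G)=\{0,-1\}$, so that $G$ is rational, and passing to $G/G''$ and $G/G'''$ via Theorems \ref{4charactervalues} and \ref{derivedlength3} to obtain $G/G'''\cong S_{4}$ and $\cd(G)=\{1,2,3,m\}$ --- matches the paper's first steps. From there, however, the paper does not attempt a direct character-value analysis: it first records $\dl(G)\leqslant 4$ by \cite[Theorem C]{IK98}, disposes of $\cd(G)=\{1,2,3,6\}$ via \cite[Theorem 1.1]{QS04}, and then invokes the Du--Lewis classification \cite[Theorem 1.1(2)]{DL15} of solvable groups with four character degrees and derived length four, whose numerical constraints (fourth degree of the form $3\cdot 2^{a}$, the divisibility condition on $(2^{2b}-1)/(2^{b}-1)$, the resulting $D_{12}$ quotient) deliver the contradiction. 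Your route is genuinely different, but it is not complete.

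The decisive gap is the case where $m$ is a power of $2$: you yourself call it ``the genuinely delicate part'' and offer only a plan (inertia groups, ``a few numerical possibilities'', ``can be shown to\dots'') rather than an argument. This is exactly the case the paper handles by importing the full Du--Lewis analysis --- indeed the content of \cite[Theorem 1.1(2)]{DL15} is precisely that the fourth degree must have the form $3\cdot 2^{a}$, so excluding a pure $2$-power fourth degree is a substantial piece of work, not a routine verification. There are secondary gaps as well. You silently restrict to $m\in\{4,6,8,12,24\}$; the bound $m\mid 24$ would come from Ito's theorem applied to the abelian normal subgroup $G'''$, but that requires $\dl(G)\leqslant 4$, which you never establish (the paper gets it from \cite[Theorem C]{IK98}). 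Your assertion that $G'/G''$ is a Frobenius complement of order $3$ acting on $G''$ is justified only ``by the exact same argument'' as Theorem \ref{derivedlength3}; that argument used that $G''$ is abelian, whereas here $G'''\neq 1$ forces $G''$ to be non-abelian, so the centre and Frobenius analysis does not transfer verbatim. Finally, deducing that $G''$ is a $2$-group from \cite[Theorem A]{DGD24} presupposes $3\nmid |G''|$, which itself depends on the unproven Frobenius structure. The $6\mid m$ branch of your argument ($3$-defect zero, hence $\cv(\chi)\subseteq\{0,2,m\}$ and $-1\notin\cv(\chi)$, contradicting Lemma \ref{lem}(d)) is sound and mirrors the endgame of Theorem \ref{derivedlength3}, but it too rests on the ``no elements of order $6$'' claim and hence on the missing Frobenius step.
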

\begin{proof} Suppose that $ |\cdc(G)|= 2 $. Note that $ \dl(G)\leqslant 4 $ by \cite[Theorem C]{IK98}. It is sufficient to show that $ \dl(G)\leqslant 3 $.
Using Theorems \ref{4charactervalues} and \ref{derivedlength3}, we may assume that $ \dl(G)=4 $. Firstly, let us consider $ G/G''' $. Note that $ |\cdc(G/G''')|=2 $. By Theorem \ref{derivedlength3}, $ G/G'''\cong S_{4} $. If $ \cd(G)=\{1, 2, 3, 6\} $, then using \cite[Theorem 1.1]{QS04}, we have that $ \dl(G)\leqslant 3 $. 

It follows that $ \cd(G)=\{ 1, 2, 3, l\} $, where $ 6\mid l $ and $ l\not= 6 $. Note that $ G $ has Fitting height $ 3 $ by \cite[Theorem A]{Rie00} and $ F_{2}/F $ is cyclic, where $ F=F(G) $ and $ F_{2} $ is such that $ F_{2}=F(G/F) $. Considering the argument in \cite{DL15}, $ G $ is the group investigated in \cite[Section 6]{DL15} and hence is the group described in \cite[Theorem 1.1(2)]{DL15}. In particular, $ \cd(G)=\{1, 2, 3, 3.2^{a}\} $ where $ a\geqslant 2 $, $ |G/F_{2}|=2 $, $ |F_{2}/F|=3 $ and there is a normal subgroup $ N\leqslant F $ and positive integer $ b $ such that $ F/N $ is elementary abelian of order $ 2^{b} $ and $ (2^{2b}-1)/(2^{b}-1) $ divides $ 3 $ and $ 3 $ divides $ 2^{2b}-1 $. If $ b\geqslant 2 $, then we have a contradiction. We may assume that $ b=1 $. It follows that $ G/N $ is of order $ 12 $ and has a normal subgroup of order $ 2 $, $ F/N $. This means that $ F/N $ is central and hence $ G/N\cong D_{12} $, the dihedral group of order $ 12 $, a contradiction.
\end{proof}
Combining Theorems \ref{4charactervalues} - \ref{4characterdegrees}, we conclude that Theorem C holds. As mentioned in the introduction, we investigate a counterexample of the general case in the last section.

\section{Finite solvable groups such that $ |\cdc(G)|=3 $}\label{cdc3}
In this section, we consider groups $ G $ such that $ |\cdc(G)|=3 $. We start with an easy proof of a result on nilpotent groups. 
\begin{theorem}\label{nilpotent3values}
Let $ G $ be a finite non-abelian nilpotent group. Then $ |\cdc(G)|=3 $ if and only if $ G $ is $ 2 $-group with $ |\cd(G)|=2 $ and $ |\cv(\chi)|\leqslant 3 $ for all $ \chi \in \Irr(G) $.
\end{theorem}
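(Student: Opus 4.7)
My plan is to pin down $\cv(G)$ as exactly $\{0,\pm 1,\pm m\}$ where $\cd(G)=\{1,m\}$, and then to use the parity constraint from Lemma~\ref{prootofunityelement} (applied to $2$-elements) to control each $\cv(\chi)$ individually.

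For the forward direction, assume $|\cdc(G)|=3$. Lemma~\ref{nilpotentvalues}(b)(i)--(ii) immediately forces $G$ to be a $2$-group with $|\cd(G)|=2$; write $\cd(G)=\{1,m\}$ with $m=2^{n}$. Then $0\in \cdc(G)$ by Lemma~\ref{lem}(c), and $-1\in \cdc(G)$ via any linear character of order $2$ (which exists since $G/G'$ is a non-trivial $2$-group). By Lemma~\ref{nilpotentvalues}(a), there is a root of unity $\epsilon\neq 1$ with $m\epsilon,\,m\overline\epsilon \in \cdc(G)$; if $\epsilon\neq -1$, these two values are distinct and non-real, pushing $|\cdc(G)|$ to at least $4$. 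Hence $\epsilon=-1$ and $\cdc(G)=\{0,-1,-m\}$, so $\cv(G)=\{0,\pm 1,\pm m\}$. Every element of $G$ is a $2$-element, so for non-linear $\chi$ Lemma~\ref{prootofunityelement} gives $|\chi(g)|^{2}\equiv m^{2}\equiv 0 \imod{2}$, ruling out the values $\pm 1$; thus $\cv(\chi)\subseteq \{0,\pm m\}$ and $|\cv(\chi)|\leqslant 3$. Linear characters take values in $\{\pm 1\}\cap \cv(G)$, so $|\cv(\chi)|\leqslant 2$.

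For the reverse direction, assume $G$ is a non-abelian $2$-group with $\cd(G)=\{1,m\}$ and $|\cv(\chi)|\leqslant 3$ for every $\chi \in \Irr(G)$. The hypothesis on linear characters forces their cyclic images (of $2$-power order) to have size at most $2$, so they lie in $\{\pm 1\}$. Fix a non-linear $\chi$, so $|\cv(\chi)|=3$, and by Lemma~\ref{nilpotentvalues}(a) choose $\epsilon\neq 1$ with $m\epsilon, m\overline\epsilon \in \cv(\chi)$. If $\epsilon\neq \pm 1$, then $\cv(\chi)=\{m,m\epsilon,m\overline\epsilon\}$ and the first orthogonality relation $\sum_{g\in G}\chi(g)=0$ produces a relation $a+b\epsilon+c\overline\epsilon=0$ with positive integer coefficients $a,b,c$; for $\epsilon=i$ the imaginary part forces $b=c$ and then the real part forces $a=0$, and for $\epsilon$ a primitive $2^{k}$-th root with $k\geqslant 3$ the elements $1,\epsilon,\overline\epsilon$ occupy distinct coordinates in the standard power basis of $\mathbb{Q}(\epsilon)$, again forcing $a=b=c=0$. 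Hence $\epsilon=-1$ and $\cv(\chi)=\{m,-m,c\}$ for some third value $c$. Lemma~\ref{lem}(f) rules out irrational $c$, so $c\in \mathbb{Z}$, and Lemma~\ref{prootofunityelement} forces $c$ to be even. The second orthogonality relation $\sum_{g\in G}|\chi(g)|^{2}=|G|$ rewrites as $(a+b)(m^{2}-1)=d(1-c^{2})$ with $a,b,d$ positive integers; the left side is non-negative while the right side is strictly negative for $|c|\geqslant 2$, so $c=0$. Thus $\cv(\chi)=\{0,\pm m\}$ for every non-linear $\chi$, which assembles into $\cv(G)=\{0,\pm 1,\pm m\}$ and $|\cdc(G)|=3$.

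The main obstacle is the reverse direction's identification of the third value of $\cv(\chi)$ as exactly $0$: this requires combining Lemma~\ref{nilpotentvalues}(a), both orthogonality relations, the rationality dichotomy of Lemma~\ref{lem}(f), and the parity constraint from Lemma~\ref{prootofunityelement}. The forward direction is essentially bookkeeping once Lemma~\ref{nilpotentvalues}(a) is used to exclude non-real $\epsilon$.
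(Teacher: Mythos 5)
Your proof is correct, and the forward direction follows the paper's route almost exactly: Lemma~\ref{nilpotentvalues}(b) forces a $2$-group with $|\cd(G)|=2$, Lemma~\ref{nilpotentvalues}(a) pins $\cv(G)$ down to $\{0,\pm 1,\pm m\}$, and the parity constraint of Lemma~\ref{prootofunityelement} (this is Lemma~\ref{nilpotentvalues}(c)) removes $\pm 1$ from $\cv(\chi)$ for non-linear $\chi$. Where you genuinely diverge is the reverse direction, which the paper dispatches with a one-line appeal to Lemma~\ref{nilpotentvalues}. Your identification of the third value of $\cv(\chi)$ as $0$ via the two orthogonality relations, the rationality dichotomy of Lemma~\ref{lem}(f), and the parity constraint is valid (the linear-independence argument for $1,\epsilon,\overline{\epsilon}$ over $\mathbb{Q}$ when $\epsilon$ is a primitive $2^{k}$-th root with $k\geqslant 2$ checks out, as does the counting identity $(a+b)(m^{2}-1)=d(1-c^{2})$), but it is considerably longer than necessary: Burnside's vanishing theorem \cite[Theorem 3.15]{Isa06} gives $0\in\cv(\chi)$ for every non-linear $\chi$ outright, so $\{m,\,m\epsilon,\,m\overline{\epsilon},\,0\}\subseteq\cv(\chi)$ with $|\cv(\chi)|\leqslant 3$ immediately forces $\epsilon=-1$ and $\cv(\chi)=\{m,-m,0\}$ in one step, eliminating both your first-orthogonality case analysis and the determination of $c$. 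What your longer route buys is self-containedness (everything reduces to orthogonality and the paper's own preliminary lemmas), at the cost of obscuring that the whole equivalence really rests on the single fact that non-linear irreducible characters of $p$-groups take only the values $0$ and $\chi(1)\epsilon$.
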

\begin{proof}
If $ |\cd(G)|\geqslant 3 $, then $ |\cdc(G)|\geqslant 4 $ by Lemma \ref{nilpotentvalues}(b)(ii). Hence $ |\cd(G)|=2 $. Using Lemma \ref{nilpotentvalues}(b)(i), it follows that $ G $ is a $ 2 $-group. This means that $ |\cv(G)|= 5 $. In particular, $ \cv(G)=\{1, m, -1, -m, 0 \} $ by Lemma 2.4(a). Note that $ 1, -1\in \cv(\chi) $ for all non-linear $ \chi \in \Irr(G) $ and hence $ |\cv(\chi)|=3 $ for all non-linear $ \chi \in \Irr(G) $. It also follows that $ |\cv(\chi)|=2 $ for all linear $ \chi \in \Irr(G) $. The result then follows.

The other direction also follows by using Lemma \ref{nilpotentvalues}.
\end{proof}
Theorem \ref{nilpotent3values} together with \cite[Main Theorem]{BCZ95} gives us the first part of Theorem D. The second part of Theorem D is going to be proved in Theorem \ref{3valuesdl2}(a).

\begin{theorem} \label{3valuesdl2}Let $ G $ be a finite group with $ |\cdc(G)|=3 $. If $ \dl(G)=2 $, then one of the following holds:

\begin{itemize}
\item[(a)] $ G $ is a $ 2 $-group and there exists a normal subgroup $ N $ such that $ G/N $ is extraspecial.
%$ G/G' $ is elementary abelian, $ \overline{G}'=G'/N $ is elementary abelian and $  \overline{G}'\subseteq Z(\overline{G}) $ is cyclic;
\item[(b)]  $ G=(C_{3}^{*}\times O_{2}(G))\rtimes C_{2} $ with $ O_{2}(G) $ is abelian. Moreover, if $ P $ is a non-abelian Sylow $ 2 $-subgroup of $ G $, then $ \cv(P)=\{1,2,0,-1,-2 \} $ and $ G/P'\cong (C_{3}^{*}\rtimes C_{2}) \times C_{2}^{*} $, where $ C_{3}^{*}\rtimes C_{2} $ is a Frobenius group whose kernel is an elementary abelian $ 3 $-group and a complement of order $ 2 $.
\end{itemize}

\end{theorem}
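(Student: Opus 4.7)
The plan is to dichotomise on whether $G$ is nilpotent. If $G$ is nilpotent, then Theorem \ref{nilpotent3values} forces $G$ to be a $2$-group with $|\cd(G)|=2$ and $|\cv(\chi)|\leqslant 3$ for every non-linear $\chi\in\Irr(G)$; the Main Theorem of \cite{BCZ95} describes exactly these $2$-groups and produces a normal subgroup $N$ with $G/N$ extraspecial. This settles (a).

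For (b), suppose $G$ is non-nilpotent. My first step is to determine the exponent $n$ of $G/G'$. The $n-1$ non-trivial $n$th roots of unity appear as linear-character values and so lie in $\cdc(G)$; combined with $0\in\cdc(G)$ (since $G$ is non-abelian), this forces $n\leqslant 3$. I would rule out $n=3$ as follows: in that case $|G/G'|$ is a $3$-power, so Ito's theorem makes every $\chi(1)$ a $3$-power, and Ito--Michler then produces a normal abelian Hall $3'$-subgroup $H$. For $h\in H$ a non-trivial $q$-element ($q\neq 3$) the value $\chi(h)$ cannot equal $\omega,\omega^2$ (it lies in $\mathbb{Z}[\zeta_{q^r}]$) and cannot equal $0$ (by Lemma \ref{prootofunityelement} with $\chi(1)$ a $3$-power), so $\chi(h)$ must be a positive $3$-power for every $\chi$. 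An orthogonality count then drives $h\in Z(G)$, hence $H\leqslant Z(G)$ and $G=P\times H$ is nilpotent, a contradiction. Therefore $n=2$, $G/G'$ is an elementary abelian $2$-group, and $\cdc(G)=\{-1,0,a\}$ for some third value $a$.

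The second step is to extract the Frobenius piece. Since $G'$ is abelian it decomposes as $O_2(G')\times B$ with $B$ a characteristic Hall $2'$-subgroup of $G$. I would pass to $G/O_2(G)$; its derived length is still at most $2$, and by tracking which of the three elements of $\cdc(G)$ survive one obtains $|\cdc(G/O_2(G))|=2$. Applying Theorem \ref{4charactervalues} then yields $G/O_2(G)\cong C_3^*\rtimes C_2$, so the Sylow $3$-subgroup $Q$ of $G$ is elementary abelian and normal. To upgrade this to $G=(Q\times O_2(G))\rtimes C_2$, I would prove $[Q,O_2(G)]=1$ via Lemma \ref{coprimerootofunity}: a non-trivial action of $Q$ on $O_2(G)$ produces an irreducible character of $QO_2(G)$ taking a value of the form (non-trivial cube root of unity)$\times$(non-trivial $2$-power root of unity), which is distinct from every element of $\cdc(G)$ and contradicts $|\cdc(G)|=3$.

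Finally, I address the Sylow $2$-subgroup $P$. If $P$ is non-abelian, then $P$ is nilpotent with $|\cdc(P)|\leqslant 3$ (the Hall $2'$-direct-factor structure of $G'/P'$ and Lemma \ref{lem}(e) let us compare character values), so Theorem \ref{nilpotent3values} applied to $P$ gives $|\cd(P)|=2$, $|\cv(P)|=5$, and $\cv(P)=\{1,2,0,-1,-2\}$. Combined with $[Q,O_2(G)]=1$, the quotient $G/P'$ splits as the Frobenius factor $Q\rtimes C_2\cong C_3^*\rtimes C_2$ direct-product the elementary abelian $2$-group $P/P'$ centralising $Q$, giving $G/P'\cong (C_3^*\rtimes C_2)\times C_2^*$. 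I expect the two main obstacles to be ruling out $n=3$ and establishing $[Q,O_2(G)]=1$; both rest on converting the sharp bound $|\cdc(G)|=3$ into a centralisation statement via Lemmas \ref{prootofunityelement} and \ref{coprimerootofunity}.
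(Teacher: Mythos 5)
Your handling of the nilpotent case, the exclusion of exponent at least $4$, and the exclusion of exponent $3$ are all sound; indeed your orthogonality argument for exponent $3$ (every $\chi(h)$ at a nontrivial $3'$-element $h$ would have to be a positive power of $3$, which is incompatible with $\sum_{\chi}\chi(1)\overline{\chi(h)}=0$) is arguably cleaner than the paper's route through root-of-unity elements and $\Gamma_{4}$. The decisive gap is the assertion that $|\cdc(G/O_{2}(G))|=2$. Unlike $\ncv$, the invariant $\cdc$ is not known to be monotone under quotients --- the paper records this as an open problem --- and in the present situation the unknown third value $a$ in $\cdc(G)=\{-1,0,a\}$ may perfectly well lie in $\cv(G/O_{2}(G))$, giving $|\cdc(G/O_{2}(G))|=3$ and leaving Theorem \ref{4charactervalues} inapplicable. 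You cannot exclude this, because you have not identified $a$ at that stage (only at the very end does one learn $a=-2$). The paper avoids the issue by arguing directly on $T=G/O_{2}(G)=H\rtimes C_{2}$: every non-linear character of $T$ has degree $2$ by Ito, hence has $2$-defect zero and vanishes on the involution $c$; column orthogonality then gives $|C_{T}(c)|=2$, so $T$ is Frobenius with complement $C_{2}$ and $c$ inverts $H$; each cyclic factor of $H$ then yields a dihedral quotient $D_{2t_{i}}$ of $G$, and the genuinely monotone containment $\ncv(D_{2t_{i}})\subseteq\ncv(G)\subseteq\cdc(G)$ forces $t_{i}=3$. Some version of this direct argument must replace your appeal to Theorem \ref{4charactervalues}.

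Two smaller points. For part (a), the paper does not extract the extraspecial quotient from the Main Theorem of \cite{BCZ95}; it proves it by passing to $\overline{G}=(G/M)/(\ker\chi/M)$ with $G'/M$ minimal normal and showing $|Z(\overline{G})|=2=|\overline{G}'|$ (an element of order $4$ in $Z(\overline{G})$ would contribute two further irrational values), so you should supply that argument rather than defer to \cite{BCZ95}. Also, once $H$ is identified as the normal $2$-complement (Thompson, since all degrees are $2$-powers) and shown to be elementary abelian of exponent $3$, the relation $[Q,O_{2}(G)]=1$ is automatic because $Q=H$ and $O_{2}(G)$ are normal with trivial intersection, so your detour through Lemma \ref{coprimerootofunity} is unnecessary; on the other hand, your treatment of a non-abelian Sylow $2$-subgroup via $P\cong G/H$ and Theorem \ref{nilpotent3values} is in the right spirit and is more explicit than the paper's own very terse handling of that case.
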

\begin{proof}
Suppose that $ G $ is a $ 2 $-group. If $ |\cd(G)|\geqslant 3 $, then $ |\cdc(G)|\geqslant 4 $ by Lemma \ref{nilpotentvalues}(b)(ii). Hence $ |\cd(G)|=2 $. Then it follows that $ G/G' $ is elementary abelian. Let $ M $ be such that $ G'/M $ is a minimal normal subgroup of $ G/M $. Let $ \chi \in \Irr(G/M) $ be non-linear and consider $ \overline{G}=(G/M)/(\ker \chi/M) $. Then $ G'/M \nsubseteq \ker \chi/M $. Note that $ \overline{G}'\subseteq Z(\overline{G}) $ is non-trivial and cyclic. If $ Z(\overline{G}) $ has an element of order $ 4 $, then there exists $ \overline{z}\in Z(\overline{G}) $ such that $ \theta(\overline{z})=\epsilon $ where $ \epsilon $ is a root of unity with $ \epsilon \notin \{-1, 1\} $. Let $ \rho \in \Irr(G)$ be an irreducible constituent of $ \theta $. Then $ \rho (\overline{z})=\rho(1)\theta(\overline{z}) $ and so $ m\epsilon, m\overline{\epsilon} \in \ncv(G) $, where $ m=\rho(1) $, a contradiction. Hence $ |Z(\overline{G})|=2 $.  Note that $ \overline{G}'= Z(\overline{G}) $ and so $ \overline{G} $ is extraspecial. Hence (a) follows.

Suppose $ G $ is not a $ 2 $-group. By Lemma \ref{nilpotentvalues}(b), $ G $ is not nilpotent. Suppose that $ G/G' $ is of exponent $ p $, where $ p\geqslant 4 $. Then $  \epsilon, \epsilon^*, \epsilon^{**}, 0\in \cv(G) $, where $ \epsilon, \epsilon^*, \epsilon^{**} $ are $ p $th roots of unity not equal to $ 1 $. This means that $ |\ncv(G)|\geq 4 $. Hence we may assume that $ G/G' $ is an elementary abelian $ p $-group, $ p\in \{ 2, 3\} $. Suppose that $ G/G' $ has exponent $ 3 $. Then $ \ncv{G}=\{\epsilon, \overline{\epsilon}, 0 \} $, where $ \epsilon $ and $ \overline{\epsilon} $ are cube roots of unity and $ G' $ is abelian. Let $ P $ be a Sylow $ 3 $-subgroup. Suppose that $ P $ is non-abelian. Then $ G/O_{3'}(G) $ is a non-abelian $ 3 $-group and by Lemma \ref{nilpotentvalues}, $ 4 \leqslant \ncv(G/O_{3'}(G))\leqslant \ncv(G) $, a contradiction. Hence $ P $ is elementary abelian. By \cite[Theorem A]{IMNT09}, $ P/O_{3}(G) $ is cyclic of order $ 3 $. If $ O_{3}(G)\not=1 $, then $ G=(O_{3'}(G)\rtimes C_{3})\times O_{3}(G) $ which means that $ m\epsilon\in \cv(G) $, where $ \epsilon \not= 1 $ is a root of unity and $ m\not= 1 $ is a character degree, a contradiction. Hence we may assume that $ O_{3}(G)=1 $. Let $ M $ be a normal subgroup of $ G $ such that $ G'/M $ is a minimal normal subgroup of $ G/M $ and let $ \overline{G}=G/M $. Note that $ \overline{P} $ is of order $ 3 $. Let $ 1\not= \overline{g}\in \overline{G} $ be a $ q $-element, where $ q\not=3 $, is a prime. By Lemma \ref{coprimerootofunity}, $ \overline{g}\notin \ker \chi $ for all non-linear $ \chi\in \Irr(G) $. If $ \overline{g}\in Z(\chi) $, then $ m\epsilon \in \cv(G) $ for some root of unity $ \epsilon\not=1 $, a contradiction. Hence $ \overline{g} $ is a root of unity element and this means that $ \overline{G}/Z(\overline{G})\cong \Gamma _{p} $ with $ p - 1=3 $. Hence $ \overline{G}/Z(\overline{G})\cong A_{12} $. But $ \ncv(A_{12})=4 $, a contradiction.

We may assume that $ G/G' $ is an elementary abelian $ 2 $-group. Let $ P $ be a Sylow $ 2 $-subgroup. By \cite[Theorem A]{IMNT09}, $ P/O_{2}(G) $ is cyclic of order $ 2 $. Consider $ G/O_{2}(G)\cong H\rtimes C_{2} $, where $ H $ is the abelian Hall $ 2' $-subgroup of $ G $. Note that $ G=H\rtimes P $. Let $ T=H\rtimes C_{2} $ and let $ 1\not= c\in C_{2} $. Then $ \chi(c)=0 $ for all non-linear $ \chi \in \Irr(T) $. Hence $ |C_{T}(c)|=\sum_{\chi\in \Irr(T)}|\chi(c)|^{2}= 1^{2} + 1^{2}=2 $. Thus $ C_{T}(c)\subseteq C_{2} $ for all $ 1\not= c\in C_{2} $, that is, $ T $ is a Frobenius group with kernel $ H $ and complement $ C_{2} $. Since $ C_{T}(c) $ is a $ 2 $-group, $ C_{2} $ acts fixed point freely on $ H $. We also have that since $ c $ is an involution, $ c^{-1}hc=h^{-1} $ for all $ h\in H $ by \cite[Proposition 16.9(e)]{Hup98}. This means that $ \langle h \rangle $ is normal in $ T $ for all $ 1\not= h \in H $ and so $ H= \langle h_{1} \rangle\times \langle h_{2} \rangle \times \cdots \times \langle h_{k} \rangle $. We have that $ \langle h_{i} \rangle \rtimes \langle c \rangle$ is a factor group of $ T $ for each $ i\in \{1, 2, \dots, k\} $. It follows that $ \langle h_{i} \rangle \rtimes \langle c \rangle = D_{2t_{i}} $ the dihedral group of order $ 2t_{i} $ with $ t_{i} $ an odd integer. Now the character theory of dihedral groups is well known (see for example \cite{Sot14}). In particular, if $ t_{i}\geqslant 5 $, then $ \ncv(D_{2t_{i}}) > 3 $. Therefore $ t_{i}=3 $ and so $ H $ is an elementary abelian $ 3 $-group. If $ O_{2}(G)=1 $, then by Sakurai's theorem \cite{Sak21}, $ |\cv(G)|=4 $, a contradiction. Hence $ O_{2}(G)\not= 1 $. Finally, it is sufficient to consider when $ P $ is abelian. Then $ P $ is elementary abelian and so we have that $ G=O_{2}(G)\times (C_{3}^*\rtimes C_{2})=C_{2}^*\times (C_{3}^*\rtimes C_{2}) $. This proves (b).
\end{proof}
Theorem \ref{3valuesdl2} establishes Theorem E. We point out that we could not find an example of a group with a non-abelian Sylow $ 2 $-subgroup in Theorem \ref{3valuesdl2}(b).

\section{Finite non-solvable groups with few character values}\label{nonsolvable}
We now turn to character values of non-solvable groups. We start by noting the existence of rational valued extendible characters in simple groups.
\begin{lemma}\cite[Lemma 4.1]{HSTV20}\label{rationalextension}
Let $ S $ be a finite non-abelian simple group. Then there exists a non-principal irreducible character of $ S $ that is extendible to a rational-valued character of $ \Aut(S) $.
\end{lemma}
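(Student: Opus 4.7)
The plan is to proceed via the Classification of Finite Simple Groups and exhibit, in each family, an explicit non-principal irreducible character of $S$ that is $\Aut(S)$-invariant and whose (unique) extension to $\Aut(S)$ takes only rational values. Recall that by Gallagher's theorem, once an irreducible character $\chi$ of $S$ is $\Aut(S)$-invariant, it automatically extends to $\Aut(S)$ whenever the Schur multiplier obstruction vanishes; and the rationality of the extension can be controlled by choosing a representative $\chi$ that is already rational-valued and whose stabilizer in $\Aut(S)$ acts trivially on a set of Galois-fixed invariants.

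First I would dispose of the alternating groups $A_n$ with $n\geqslant 5$, $n\ne 6$. Here $\Aut(A_n)=S_n$ and the standard character $\pi-1_{S_n}$ of degree $n-1$ is rational-valued (all irreducible characters of symmetric groups are $\Q$-valued, a classical theorem of Frobenius), and its restriction to $A_n$ remains irreducible for $n\geqslant 5$. For $n=6$ I would handle $\Aut(A_6)=\mathrm{P}\Gamma\mathrm{L}_2(9)$ by pointing to the Atlas character table, picking the $9$-dimensional rational character of $A_6$ which is invariant under the full outer automorphism group and extends rationally.

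Next, and this is the core of the argument, I would treat the groups of Lie type by exhibiting the Steinberg character $\mathrm{St}_S$. The key facts I would invoke are that $\mathrm{St}_S$ is rational-valued (its values on a semisimple element $s$ equal $\pm|\Centralizer_S(s)|_p$, where $p$ is the defining characteristic), that it is invariant under all diagonal, field, and graph automorphisms (a standard result of Steinberg, see e.g.\ Digne--Michel), and therefore extends to $\Aut(S)$. I would then argue that a canonical extension may be chosen to remain rational: since $\mathrm{St}_S$ is the unique irreducible constituent of its degree in the appropriate Harish-Chandra series, its $\Aut(S)$-extension is determined up to tensoring by linear characters of $\Aut(S)/S$, and one such extension (the one realized on Steinberg's algebraic model) takes integer values on every element of $\Aut(S)$.

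Finally, for the $26$ sporadic groups I would argue by case-by-case inspection of the Atlas: in each case $|\Out(S)|\leqslant 2$, and the Atlas explicitly lists a non-principal rational irreducible character of $S$ whose extensions to $S.2$ are both rational; picking any such character completes the proof. The main obstacle I anticipate is the Lie-type case: verifying that the Steinberg character extends \emph{rationally} (not merely up to a root of unity coming from the outer automorphism group) requires a careful choice of extension, and in small-rank or twisted cases one must check that no exceptional Schur multiplier obstruction forces an irrational extension. I would handle those finitely many small exceptional cases (e.g.\ $\PSU_4(3)$, $\PSL_3(4)$, and the like) by direct reference to their Atlas character tables, confirming the existence of a rational extendible non-principal character there as well.
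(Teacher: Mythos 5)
The paper does not prove this lemma at all: it is imported verbatim as \cite[Lemma 4.1]{HSTV20}, so there is no internal argument to compare yours against. Your sketch essentially reconstructs the proof of the cited result --- the standard character $\chi_{(n-1,1)}$ for $A_n$, the Steinberg character for groups of Lie type, and Atlas inspection for the sporadic groups and the small exceptions such as $A_6$ --- and that is indeed the accepted route, so the overall strategy is sound.

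Two steps need tightening before this counts as a proof. First, the claim that an $\Aut(S)$-invariant character ``automatically extends whenever the Schur multiplier obstruction vanishes'' is not Gallagher's theorem: Gallagher describes the constituents lying over a character already known to extend, whereas extendibility of an invariant character is governed by the vanishing of a class in $H^2(\Aut(S)/S,\mathbb{C}^{\times})$ attached to the character triple (Isaacs, Ch.~11). When $\Out(S)$ is non-cyclic --- e.g.\ $C_2\times C_2$ for $A_6$, or the larger outer automorphism groups of $\PSL_3(4)$, $\PSU_4(3)$ and $\mathrm{O}_8^{+}(2)'$ --- this must actually be verified, not assumed. Second, and this is the genuine mathematical content you rightly flag as the obstacle: that the Steinberg character not only extends to $\Aut(S)$ but admits a \emph{rational-valued} extension is a theorem in its own right (Schmid on extending the Steinberg representation, and Feit on extending Steinberg characters); it does not fall out of ``Steinberg's algebraic model'' without work, and your Harish-Chandra uniqueness remark only pins the extension down up to linear characters of $\Aut(S)/S$, which need not be rational when that quotient has elements of order greater than $2$. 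As written this step is asserted rather than proved; citing those results (which is what the source \cite{HSTV20} does) closes the gap, and with those two repairs your argument is complete.
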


\begin{theorem}\label{extendiblecharacters}
If $ n\geqslant 5 $, $ n\not= 6 $, then $ A_{n} $ has two non-principal characters of coprime degrees that extend to $ \Aut(A_{n})=S_{n} $. 
\end{theorem}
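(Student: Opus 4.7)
The plan is to exhibit two explicit partitions of $n$, both non-self-conjugate and different from $(n)$ and $(1^n)$, whose associated $S_n$-character degrees are coprime. For $n\geqslant 5$ with $n\not=6$ one has $\Aut(A_n)=S_n$, and the irreducible character $\chi^\lambda\in\Irr(S_n)$ indexed by a partition $\lambda$ of $n$ restricts irreducibly to $A_n$ precisely when $\lambda$ is not self-conjugate; in that case $\chi^\lambda|_{A_n}$ extends back to $S_n$ via $\chi^\lambda$ itself. Since $\chi^{(n)}|_{A_n}$ and $\chi^{(1^n)}|_{A_n}$ are both the trivial character of $A_n$, the problem reduces to producing two non-self-conjugate partitions of $n$, neither equal to $(n)$ nor $(1^n)$, with coprime $S_n$-degrees.

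For $n\geqslant 7$, I would take $\lambda=(n-2,2)$ and $\mu=(n-2,1,1)$. Their conjugates $(2,2,1^{n-4})$ and $(3,1^{n-3})$ are visibly different from $\lambda$ and $\mu$, so both are non-self-conjugate. The hook-length formula gives
\[
\chi^{\lambda}(1) \;=\; \frac{n(n-3)}{2}, \qquad \chi^{\mu}(1) \;=\; \binom{n-1}{2} \;=\; \frac{(n-1)(n-2)}{2},
\]
and the key (one-line) observation is
\[
\frac{(n-1)(n-2)}{2} \;-\; \frac{n(n-3)}{2} \;=\; 1.
\]
Hence these two degrees are consecutive positive integers, both strictly greater than $1$ for $n\geqslant 7$, and therefore coprime.

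The small case $n=5$ must be treated separately, since $(n-2,1,1)=(3,1,1)$ happens to be self-conjugate and the above recipe breaks down. Here I would instead take $\lambda=(4,1)$ and $\mu=(3,2)$: both partitions are non-self-conjugate (with conjugates $(2,1,1,1)$ and $(2,2,1)$), and their $S_5$-degrees $4$ and $5$ are coprime.

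I do not expect any genuine obstacle in this argument: it amounts to two hook-length computations, the elementary arithmetic identity displayed above, and an ad-hoc check for $n=5$. The only mild subtlety is that $(3,1,1)$ happens to be self-conjugate, which is precisely why $n=5$ has to be pulled out of the general family.
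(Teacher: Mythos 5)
Your proof is correct, and it takes a more self-contained route than the paper, whose entire argument is a citation: the paper derives the statement from \cite[Theorem 3]{BCLP07} together with the remark that $A_5$ has two extendible characters of degrees $4$ and $5$. You instead exhibit the witnesses explicitly. All the details check out: for $n\geqslant 7$ the partitions $(n-2,2)$ and $(n-2,1,1)$ have conjugates $(2,2,1^{n-4})$ and $(3,1^{n-3})$, so neither is self-conjugate, the corresponding $S_n$-characters restrict irreducibly to $A_n$ and serve as their own extensions to $\Aut(A_n)=S_n$, and the hook-length degrees $n(n-3)/2$ and $(n-1)(n-2)/2$ differ by $1$, hence are coprime (and both exceed $1$, so the restrictions are non-principal). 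Your separate treatment of $n=5$ via $(4,1)$ and $(3,2)$ recovers exactly the degrees $4$ and $5$ that the paper invokes, and you correctly identify the reason $n=5$ must be split off, namely that $(3,1,1)$ is self-conjugate. What your approach buys is a proof that can be verified on the spot and that makes the exceptional role of $n=5$ transparent; what the paper's approach buys is brevity, at the cost of outsourcing the real content to \cite{BCLP07} (whose proof may well rest on a similar explicit choice of partitions, but that cannot be read off from the paper's text).
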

\begin{proof}
This follows \cite[Theorem 3]{BCLP07} and that $ A_{5} $ has two extendible characters of degrees $ 4 $ and $ 5 $.
\end{proof}
It turns out that a non-rational group will have more character values by Lemma \ref{lem}(d). Hence we want to take advantage of this fact to exclude many groups. The next result tells us exactly which simple groups are composition factors of rational groups. 
\begin{theorem} \label{rationalnonsolvable}\cite[Theorem B and Corollary B.1]{FS89} Let $ S $ be a non-abelian finite simple group. Then
\begin{itemize}
\item[(a)]  $ S $ is a composition factor of a rational group if and only if $ S\in \{ A_{n}$, with $ n\geqslant 5, \PSL_{3}(4), \PSU_{4}(3), \mathrm{PSp}_{4}(3), \mathrm{Sp}_{6}(2), \mathrm{O}_{8}^{+}(2)'  \} $. 
\item[(b)] $ S $ is rational group if and only if $ S\cong \mathrm{Sp}_{6}(2) $ or  $\mathrm{O}_{8}^{+}(2)' $.
\end{itemize}
\end{theorem}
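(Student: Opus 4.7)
The plan is to reduce to classifying rational almost simple extensions of $S$ and then invoke the classification of finite simple groups (CFSG). For the reduction in part (a), suppose $S$ is a composition factor of a rational group $G$. Using the subnormal structure, one locates a chief factor of $G$ isomorphic to $S^{k}$ and lets $G$ act on it by conjugation, producing a homomorphism $G\to \Aut(S^{k}) = \Aut(S)\wr S_{k}$. Since rationality is inherited by quotients, the image is rational, and projecting onto a single factor yields a rational group $\widetilde{G}$ with $S\trianglelefteq \widetilde{G}\leqslant \Aut(S)$. Conversely, any such rational almost simple $\widetilde{G}$ witnesses $S$ as a composition factor of a rational group. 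Thus part (a) amounts to listing those simple $S$ that admit at least one rational almost simple extension inside $\Aut(S)$, and part (b) is the special case $\widetilde{G}=S$.

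Next I would traverse the CFSG families. For the alternating groups $A_{n}$ with $n\geqslant 5$ and $n\neq 6$, the automorphism group is $S_{n}$, which is rational since conjugacy classes in $S_{n}$ are determined by cycle type and cycle type is preserved under taking coprime powers; this places every such $A_{n}$ in the list in (a). For $A_{6}$ the intermediate extension $S_{6}\leqslant \Aut(A_{6})$ is again rational, handling that case. The sporadic simple groups are processed individually by direct inspection of the \Atlas{}: for each sporadic $S$ one locates an element whose coprime powers remain in distinct classes even after passing to $\Aut(S)$, ruling out any rational almost simple extension.

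The substantive part of the argument concerns the groups of Lie type. Here I would use Deligne--Lusztig theory, together with detailed information on semisimple and unipotent classes and the action of the diagonal, field and graph automorphisms, to decide for each Lie-type $S$ whether every class of $S$ can be merged with all of its coprime powers inside some subgroup of $\Aut(S)$. The obstruction is typically a semisimple element in a maximal torus whose Galois orbit on eigenvalues exceeds what any outer automorphism can identify, and I expect this case-by-case analysis to be the main obstacle --- it is the technical heart of the Feit--Seitz argument and essentially forces the short list in (a). Part (b) then follows by a finite check: among the candidates in (a), the alternating groups $A_{n}$ for $n\geqslant 5$ are visibly not rational (for instance the 5-cycle in $A_{5}$ is not conjugate in $A_{5}$ to its square), and direct inspection of the remaining five groups in the \Atlas{} leaves $\mathrm{Sp}_{6}(2)$ and $\mathrm{O}_{8}^{+}(2)'$ as the only rational simple groups.
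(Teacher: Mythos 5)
The paper does not prove this statement at all: it is imported verbatim from Feit and Seitz \cite{FS89}, so there is no internal argument to compare against, and what you have written is an outline of a substantial research paper rather than a proof. The decisive flaw is in your reduction. Quotients of rational groups are rational, so passing to $G/\Centralizer_{G}(S^{k})\leqslant \Aut(S)\wr S_{k}$ is legitimate; but ``projecting onto a single factor'' is not a quotient operation. To project you must first restrict to the stabilizer $\Normalizer_{G}(S_{1})$ of one component (or to $G\cap \Aut(S)^{k}$), and subgroups of rational groups need not be rational; nor is the coordinate projection of such a subgroup a quotient of $G$. What the embedding actually yields is weaker: if $1\neq x\in S_{1}$ and $x^{g}=x^{m}$ with $m$ coprime to the order of $x$, then $1\neq x^{g}\in\langle x\rangle\leqslant S_{1}\cap S_{1}^{g}$ forces $g\in \Normalizer_{G}(S_{1})$, so every element of $S$ is conjugate to all of its coprime powers inside the almost simple group $\Normalizer_{G}(S_{1})/\Centralizer_{G}(S_{1})$. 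That condition --- rationality of the classes of $S$ inside a subgroup of $\Aut(S)$ --- is what Feit and Seitz actually classify, and it is strictly weaker than the existence of a rational almost simple extension. Hence your restatement of part (a) as ``$S$ admits a rational almost simple extension'' replaces the theorem by a different (and not obviously equivalent) classification problem; moreover the ``if'' direction of (a) requires exhibiting, for each listed $S$, an explicit rational group with $S$ as a composition factor (for $\PSp_{4}(3)$ one can take the Weyl group of $E_{6}$), which your sketch never does.

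Beyond the reduction, the entire content of the theorem for groups of Lie type is deferred to a single sentence promising a Deligne--Lusztig analysis; that analysis is precisely the technical bulk of \cite{FS89} and cannot be treated as routine. Part (b) is likewise not ``a finite check'' until (a) has been established, although your observation that a $5$-cycle in $A_{5}$ is not conjugate to its square is a correct instance of the kind of verification needed. As it stands the proposal is a plan for a proof whose one concrete structural step fails, not a proof.
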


For Theorem \ref{rationalnonsolvable}, we can see that the alternating groups are the only infinite family groups such that they are composition factors of rational groups. We shall consider them in a uniform way. The rest of the groups in the list above can be dealt with using \Atlas{} \cite{CCNPW85} and \GAP {} \cite{GAP}. 

Let $ n $ be a natural number. A partition $ \lambda=(\lambda_{1}, \lambda_{2},\dots, \lambda_{r}) $ of $ n $ such that $ \lambda_{i} $, $ i=1, 2, \dots, r $ are integers, with $ \lambda_{1}\geqslant \lambda_{2} \geqslant \cdots \geqslant \lambda_{r} > 0 $ and $ \lambda_{1} + \lambda_{2} + \dots + \lambda_{r}=n $. The irreducible complex characters of the symmetric group $ S_{n} $ are characterized by partitions and hence can be identified by a corresponding partition. If $ \chi_{\lambda}\in \Irr(S_{n}) $, then an irreducible character of $ A_{n} $ is obtained by restricting $ \chi_{\lambda} $ to $ A_{n} $. In particular, $ \chi_{\lambda} $ is irreducible when restricted to $ A_{n} $ if and only if $ \lambda $ is not self-conjugate. Elements of $ S_{n} $ and $ A_{n} $ are identified by their cycle types. In the next result we show that if $ n\geqslant 15 $, then $ A_{n} $ contains an irreducible character that is extendible to $ S_{n} $ and has four character values that are not positive integers:

\begin{proposition}\label{Alternating4charactervalues}
Let $ S\cong A_{n} $, with $ n\geqslant 15 $. Then $ S $ has an irreducible character $ \chi $ which is extendible to $ S_{n} $ such that $ |\ncv(\chi)|\geqslant 4 $. 
\end{proposition}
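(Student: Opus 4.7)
The plan is to take $\chi$ to be the irreducible character of $S_n$ labelled by the partition $\lambda = (n-2, 1, 1)$, whose conjugate partition $(3, 1^{n-3})$ differs from $\lambda$ for $n \neq 5$; hence for $n \geq 15$ the partition is non-self-conjugate, and $\chi$ restricts irreducibly to $A_n$ while extending back to $S_n$. Since $\chi$ takes integer values, proving $|\ncv(\chi)| \geq 4$ reduces to producing four conjugacy classes of $A_n$ on which $\chi$ attains four pairwise distinct non-positive integer values.

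I would next derive a closed form for $\chi$. Let $V$ be the $n$-dimensional permutation representation of $S_n$ (so that $V(g) = f(g)$ equals the number of fixed points of $g$) and let $V_0$ be its $(n-1)$-dimensional standard irreducible constituent. The identification $\chi \cong \wedge^2 V_0$ together with the decomposition $\wedge^2 V = \wedge^2 V_0 \oplus V_0$ and the standard identity $\wedge^2 V(g) = (f(g)^2 - f(g^2))/2$ yields
\[
\chi(g) \;=\; \frac{f(g)^2 - f(g^2) - 2 f(g) + 2}{2}.
\]
For a cycle type of the shape $\mu_{b, a, r} = (3^b, 2^{2a}, 1^r)$ with $3b + 4a + r = n$ (always an even permutation, since the number of transpositions $2a$ is even), one then has $f = r$ and $f(g^2) = 4a + r$, so substitution collapses the formula to
\[
\chi(\mu_{b, a, r}) \;=\; \frac{(r - 1)(r - 2) - 4a}{2}.
\]

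To close the proof I would case-split on $n$ modulo $3$, since the residue governs when $b = (n - 4a - r)/3$ is a non-negative integer. If $n \equiv 0 \pmod 3$, I would take $(r, a) \in \{(0, 3), (1, 2), (2, 1), (4, 2)\}$, obtaining the values $\chi = -5, -4, -2, -1$, valid for $n \geq 15$. If $n \equiv 1 \pmod 3$, the choices $(r, a) \in \{(0, 1), (2, 2), (1, 3), (0, 4)\}$ yield $\chi = -1, -4, -6, -7$, valid for $n \geq 16$. If $n \equiv 2 \pmod 3$, the choices $(r, a) \in \{(1, 1), (0, 2), (2, 3), (1, 4)\}$ yield $\chi = -2, -3, -6, -8$, valid for $n \geq 17$. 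Every integer $n \geq 15$ lies in one of these three residue classes at or above its threshold ($15, 16, 17$ themselves cover the three classes at equality), so in each case the four triples $(b, a, r)$ are well-defined and give four distinct non-positive integer values of $\chi$, establishing $|\ncv(\chi)| \geq 4$.

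The main obstacle is really the initial guess: one has to identify a character together with a family of conjugacy classes whose values obey a tractable closed form. Once the character $\chi_{(n-2, 1, 1)}$ and cycle types of the form $(3^b, 2^{2a}, 1^r)$ are chosen, the quadratic-in-$(r, a)$ expression $\chi = \frac{(r-1)(r-2) - 4a}{2}$ is transparent enough that the residue-class case split is purely mechanical, and no delicate number-theoretic or computer-algebra input is required.
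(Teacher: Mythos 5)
Your proposal is correct and follows essentially the same route as the paper: both take the character labelled by the non-self-conjugate partition $(n-2,1,1)$ (equivalently $\wedge^{2}V_{0}$), restrict it irreducibly to $A_{n}$, and exhibit four conjugacy classes of even permutations on which it assumes four distinct values outside $\mathbb{N}$. The only difference is cosmetic: the paper lists cycle types with one long cycle and splits on the parity of $n$ to realise the values $0,-1,-2,-3$, whereas you use cycle types $(3^{b},2^{2a},1^{r})$ with a residue-mod-$3$ split; your closed formula $\chi(g)=\tfrac{(r-1)(r-2)-4a}{2}$ and all twelve evaluations check out for every $n\geqslant 15$.
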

\begin{proof}
The right character $ \chi_{\lambda} $ is when $ \lambda=(n-2, 1, 1) $(this is extendible to $ S_{n} $ since it is not self-conjugate). We show in the table below for which elements $ g\in A_{n} $ the character $ \chi_{\lambda} $ take values $ 0, -1, -2, -3 $.

\begin{center}\label{AlternatingTable}
Character values of $ \chi_{\lambda}=(n-2, 1, 1) $ in $ A_{n} $.
\begin{tabular}{c|l|r}
\cline{1-3}
$ n $ & $ g\in A_{n} $ & $ \chi_{\lambda}(g) $ \\
\cline{1-3}
$ n\geqslant 15 $, $ n $ odd  &   $ (n-6, 4, 2) $   &   $ 0 $  \\

  &   $(n-4, 2, 2) $   &   $ -1 $ \\

  &   $ (n-10, 4, 2,2, 2) $   &   $ -2 $  \\

  &   $ (n-8, 2, 2, 2, 2) $   &   $ -3 $\\
\cline{1-3}
$ n\geqslant 16 $, $ n $ even  &   $ (n-2, 2) $   &   $ 0 $  \\

 &   $ (n-8, 4, 2, 2, 2) $   &   $ -1 $ \\

 &   $ (n-6, 2, 2, 2) $   &   $ -2 $ \\

 &   $ (n-12, 4, 2, 2, 2, 2) $   &   $ -3 $ \\
\hline
\end{tabular}
\end{center}
\end{proof}

\begin{theorem}\label{uniqueminimalnormal}
Suppose that $ G $ has a unique non-abelian minimal normal subgroup $ N $. Then either $ |\ncv(G)|\geqslant 4 $ or $ |\ncv(G)|=3 $ and possibly one the following holds:
\begin{itemize}
\item[(a)] $ N=A_{6}\times A_{6} $ and $ G/N $ is a elementary abelian $ 2 $-group.
\item[(b)] $ G\cong S_{5} $.
\end{itemize}
\end{theorem}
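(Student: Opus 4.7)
The plan is to reduce to the case where $G$ is a rational group, invoke the Feit--Seitz classification (Theorem~\ref{rationalnonsolvable}) to restrict the socle type of $N$, and then eliminate most possibilities via an extendibility/wreath--product argument, handling the few remaining cases by direct inspection.

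Since $N$ is the unique non-abelian minimal normal subgroup, write $N \cong S^{k}$ with $S$ a non-abelian simple group. Because $G$ is non-abelian, Lemma~\ref{lem}(c) gives $0 \in \ncv(G)$, and the first orthogonality relation applied to any non-trivial rational character yields a negative integer in $\ncv(G)$. If $G$ is not rational, then some $\chi \in \Irr(G)$ takes an irrational value and Lemma~\ref{lem}(f) supplies a second irrational value, so $|\ncv(G)| \geq 4$ immediately. Hence we may assume $G$ is rational, and by Theorem~\ref{rationalnonsolvable}(a) we have $S \in \{A_{n}\ (n \geq 5),\ \PSL_{3}(4),\ \PSU_{4}(3),\ \PSp_{4}(3),\ \mathrm{Sp}_{6}(2),\ \mathrm{O}_{8}^{+}(2)'\}$.

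The main step handles $S = A_{n}$ with $n \geq 15$. Let $\chi \in \Irr(S)$ be the $S_{n}$-extendible character from Proposition~\ref{Alternating4charactervalues}, whose values include the four distinct non-positive integers $0,-1,-2,-3$. Form $\Psi = \chi^{\otimes k} \in \Irr(N)$; since $\chi$ is $\Aut(S)$-invariant with an extension to $\Aut(S)=S_{n}$, the standard wreath-product construction produces an irreducible character of $\Aut(N) = S_{n} \wr S_{k}$ restricting to $\Psi$. Using $G/C_{G}(N) \hookrightarrow \Aut(N)$ together with Clifford theory (the $G$-invariant character $\Psi$ has an extension to its inertia group in $G$, which is $G$ itself), we obtain an extension $\widetilde{\Psi} \in \Irr(G)$. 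Evaluating $\widetilde{\Psi}$ on elements of the form $(g_{i},1,\ldots,1) \in N$ with $\chi(g_{i}) \in \{0,-1,-2,-3\}$ yields the four distinct non-positive integers $0,-a,-2a,-3a$ with $a = \chi(1)^{k-1}$, so $|\ncv(G)| \geq 4$.

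What remains is the finite analysis for $S \in \{\PSL_{3}(4), \PSU_{4}(3), \PSp_{4}(3), \mathrm{Sp}_{6}(2), \mathrm{O}_{8}^{+}(2)'\}$ or $S = A_{n}$ with $5 \leq n \leq 14$. For each such $S$ and each admissible value of $k$, one combines Theorem~\ref{extendiblecharacters} with character tables from the \Atlas{} and \GAP{} to produce sufficiently many rational extendible characters, and applies the same tensor/wreath extension as above to supply four distinct non-positive integer values in $\ncv(G)$, \emph{except} in the two residual configurations $G \cong S_{5}$ and $N \cong A_{6} \times A_{6}$ with $G/N$ an elementary abelian $2$-group. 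The main obstacle is $S = A_{6}$: its outer automorphism group $C_{2} \times C_{2}$ produces four almost simple extensions $S_{6}$, $\PGL_{2}(9)$, $M_{10}$, $\mathrm{P\Gamma L}_{2}(9)$ that each require separate inspection, and for $k=2$ the interaction of the wreath action with $\Out(A_{6})$ prevents a clean elimination, which is why the conclusion lists~(a) only as a \emph{possible} case rather than a confirmed one.
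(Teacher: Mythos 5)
Your overall strategy coincides with the paper's: dispose of the non-rational case via Lemma \ref{lem}(f) together with a rational-valued extension of a character of the socle, invoke Feit--Seitz (Theorem \ref{rationalnonsolvable}) to restrict $S$, and then manufacture four non-natural values by evaluating an extension of a product character of $N=S^{k}$ on elements supported in one coordinate, leaving a finite \Atlas{}/\GAP{} check for the small cases. Your uniform treatment of $A_{n}$, $n\geqslant 15$, using the single character of Proposition \ref{Alternating4charactervalues} (values $0,-1,-2,-3$ scaling to $0,-d,-2d,-3d$ with $d=\chi(1)^{k-1}$) is a clean variant of the paper's device of two coprime-degree extendible characters from Theorem \ref{extendiblecharacters}, and that part is sound.

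The gap is the branch $S\cong A_{6}$, which you defer to ``direct inspection'' even though your stated tools cannot reach it: Theorem \ref{extendiblecharacters} explicitly excludes $n=6$, and a finite character-table check cannot cover $N\cong A_{6}^{k}$ for unboundedly many $k$. Yet this is precisely where exception (a) must be pinned down. The paper supplies the missing argument: for $k\geqslant 3$ the Steinberg character of $A_{6}$ (degree $9$, taking both values $1$ and $-1$) produces the four distinct values $0$, $-1$, $-9^{k-1}$, $-1\cdot 9^{k-2}$ of the extended product character, so only $k=2$ survives, with the three values $0,-1,-9$; and for $k=2$ the quotient $G/N$ is a $2$-group which, if not elementary abelian, contributes further values ($-2$ via Lemma \ref{nilpotentvalues} in the non-abelian case), forcing $G/N$ elementary abelian. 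None of this appears in your proposal: you assert configuration (a), including the bound $k=2$ and the structure of $G/N$, without deriving either. Moreover, attributing the difficulty to ``the interaction of the wreath action with $\Out(A_{6})$'' misidentifies the obstruction --- what fails for $A_{6}$ is the availability of two coprime-degree extendible characters, and what resolves it is a different extendible character plus the $2$-group argument on $G/N$.
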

\begin{proof}
Suppose that $ N=S_{1}\times S_{2} \times \cdots \times S_{k} $, where $ S_{i}\cong S $ for some non-abelian simple group $ S $ and $ k\geqslant 2 $. By Lemma \ref{rationalextension}, there exists $ \theta \in \Irr(S) $ that is extendible to a rational-valued character of $ \Aut(S) $. Then there exists $ \chi \in \Irr(G) $ that is rational-valued such that $ \chi_{N} =\theta _{1}\times \theta _{2}\times \cdots \times \theta _{k} $. Note that $ \chi_{N} $ is faithful in $ N $ and hence $ |\cv(\chi_{S_{i}})|\geqslant 4 $ for all $ i $ by \cite[Lemma 2]{BCZ95}. Suppose these values are $ \theta_{1}(1), 0, a, b $ with $ a $, a negative rational number by Lemma \ref{lem}(d). Note that $ |a|, |b| < \theta_{1}(1) $. Let $ g_{1}\in S_{1} $, $ h_{i}\in S_{i} $ be such that $ \theta_{1}(g_{1})=a $ and $ \theta_{i}(h_{i})=b $ for $ i=2, \dots, k $. Hence $ \chi(g_{1}\times 1 \times \cdots \times 1)=a\theta(1)^{k-1} \in \ncv(\chi) $. If $ b $ is negative, then $ b\theta(1)^{k-1}\ncv(G) $ and if $ b $ is positive, then $ ab^{k-1}\in \ncv(\chi) $. Since $ 0\in \ncv(\chi) $, then $ |\ncv(G)|\geqslant 3 $. If $ G $ is not a rational group, then by Lemma \ref{lem}(f), $ |\ncv(G)|\geqslant 5 $.

Suppose that $ G $ is a rational group. Then by Theorem \ref{rationalnonsolvable}, we may assume that $ S $ is isomorphic to $ \PSL_{3}(4), \PSU_{4}(3)$, $\mathrm{PSp}_{4}(3), \mathrm{Sp}_{6}(2), \mathrm{O}_{8}^{+}(2)'$ or $ A_{n} $ with $ n\geqslant 5 $. Suppose $ S\ncong A_{6} $, then by the \Atlas{} \cite{CCNPW85} and Theorem \ref{extendiblecharacters}, there exists two extendible characters and using the argument in the first paragraph we get at least four negative character values. If $ S\cong A_{6} $ and $ k\geqslant 3 $, then using the Steinberg character and the argument in the first paragraph, $ 0, -1(1)^{k-1}, -1.9^{k-1}, -1(1)9^{k-2}\in \ncv(G)  $ and so $ |\ncv(G)|\geqslant 4 $. Consider $ k=2 $. By the argument in the first paragraph we have $ 0, -1,-9\in \ncv(G)  $. Then $ \Aut(N)=(\Aut(A_{6}))^{2}\wr C_{2} $. If $ B=(\Aut(A_{6})\times \Aut(A_{6}))\cap G $, then $ G/B\cong C_{2} $. Note that $ G/N $ is a $ 2 $-group. If $ G/N $ is non-abelian, then $ -2\in \ncv(G) $ using Lemma \ref{nilpotentvalues}, a contradiction. It follows that $ G/N $ is elementary abelian $ 2 $-group. 

Hence $ G $ is almost simple group with socle $ S $. By Lemma \ref{rationalextension}, there exists $ \theta \in \Irr(S) $ that is extendible to a rational-valued character of $ \Aut(S) $. Then there exists $ \chi \in \Irr(G) $ that is rational-valued such that $ \chi_{S}\in \Irr(S) $. Note that $ \chi_{S} $ is faithful in $ S $ and hence $ |\cv(\chi_{S})|\geqslant 4 $ by \cite[Lemma 2]{BCZ95}. We may assume that these values are $ \theta(1), 0, a, b $ with $ a $, a negative rational number by Lemma \ref{lem}(d). If $ G $ is not a rational group, then $ G $ has at least two irrational character values $ c, d $ using Lemma \ref{lem}(f). Hence $ |\ncv(G)|\geqslant 4 $.

Suppose that $ G $ is rational group. Note that the only simple groups are $ \mathrm{Sp}_{6}(2)$ and $\mathrm{O}_{8}^{+}(2)' $. Using the \Atlas{} \cite{CCNPW85} and \GAP{} \cite{GAP} for $ \mathrm{Sp}_{6}(2)$, $\mathrm{O}_{8}^{+}(2)' $ and $ S_{n} $ with $ n\leqslant 14 $, it follows that the only group with $ |\ncv(G)|=3 $ is $ S_{5} $. If $ G\cong S_{n} $, where $ n\geqslant 15 $, then the result follows by Proposition \ref{Alternating4charactervalues}.
\end{proof}

\begin{theorem}\label{3negativevalues}
Let $ G $ be a finite group which has no composition factor isomorphic to $ A_{5} $ or $ A_{6} $. If $ |\ncv(G)|\leqslant 3 $, then $ G $ is solvable.
\end{theorem}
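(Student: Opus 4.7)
The plan is to argue by induction on $|G|$, reducing to the situation handled by Theorem \ref{uniqueminimalnormal}. Suppose for contradiction that $G$ is a minimal counterexample: $G$ is non-solvable, no composition factor of $G$ is isomorphic to $A_5$ or $A_6$, and $|\ncv(G)| \leqslant 3$. I will first show that every nontrivial quotient of $G$ is solvable. Indeed, if $1 \neq N \trianglelefteq G$, then $|G/N| < |G|$, every composition factor of $G/N$ is a composition factor of $G$ (so none is $A_5$ or $A_6$), and by Lemma \ref{lem}(a) we have $|\ncv(G/N)| \leqslant |\ncv(G)| \leqslant 3$. Minimality of $G$ then forces $G/N$ to be solvable.

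Next I would argue that $G$ has a unique minimal normal subgroup. If $N_1$ and $N_2$ were distinct minimal normal subgroups, then $N_1 \cap N_2 = 1$, so $G$ embeds into $G/N_1 \times G/N_2$, a product of two solvable groups by the previous step; hence $G$ would be solvable, a contradiction. Let $N$ be the unique minimal normal subgroup of $G$. If $N$ were abelian, then $N$ and $G/N$ would both be solvable, whence $G$ would be solvable. Therefore $N$ is non-abelian, i.e.\ $N$ is a direct product of isomorphic non-abelian simple groups.

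At this point $G$ satisfies the hypothesis of Theorem \ref{uniqueminimalnormal}. Since $|\ncv(G)| \leqslant 3$, that theorem forces $|\ncv(G)| = 3$ and either (a) $N \cong A_6 \times A_6$ with $G/N$ an elementary abelian $2$-group, or (b) $G \cong S_5$. In case (a), $A_6$ is a composition factor of $G$; in case (b), $A_5$ is a composition factor of $G$. Both possibilities contradict the hypothesis on the composition factors of $G$, completing the proof.

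The routine work is contained in the reduction to a group with a unique non-abelian minimal normal subgroup; the substantive input is Theorem \ref{uniqueminimalnormal}, which already isolates the two small exceptions that the composition-factor hypothesis is tailored to rule out. No new difficulty arises here beyond recognising that the induction on $|G|$ goes through cleanly because both the bound on $|\ncv(\cdot)|$ and the composition-factor restriction are inherited by quotients.
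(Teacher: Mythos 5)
Your proposal is correct and follows essentially the same route as the paper: take a minimal counterexample, use that the hypotheses pass to quotients to reduce to a unique non-abelian minimal normal subgroup, and then invoke Theorem \ref{uniqueminimalnormal}, whose two exceptional cases are exactly ruled out by the assumption on composition factors. You spell out a couple of steps the paper leaves implicit (why the unique minimal normal subgroup cannot be abelian), but the argument is the same.
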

\begin{proof}
Suppose the theorem is not true and let $ G $ be a minimal counterexample. If $ N_{1} $ and $ N_{2}$ are two minimal normal subgroups of $ G $, then $ |\ncv(G/N_{i})|\leqslant 3 $ and $ G/N_{i} $ has no composition factor isomorphic to $ A_{5} $ or $ A_{6} $, for $ i=1, 2 $. Hence $ G/N_{i} $ is solvable and so $ G/(N_{1}\cap N_{2})\cong G $ is solvable. 

We may assume that $ G $ has a unique minimal normal subgroup which is non-abelian. Using Theorem \ref{uniqueminimalnormal}, we have our result.
\end{proof}

Theorem B follows immediately from Theorem \ref{3negativevalues} as the following result states:

\begin{corollary}
Let $ G $ be a finite group which has no composition factor isomorphic to $ A_{5} $ or $ A_{6} $. If $ |\cdc(G)|\leqslant 3 $, then $ G $ is solvable.
\end{corollary}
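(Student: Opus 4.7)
The plan is short because the corollary is essentially a packaging of Theorem~\ref{3negativevalues}. The only thing to verify is the inclusion $\ncv(G)\subseteq \cdc(G)$, which was already noted in the paragraph defining $\ncv(G)$. Since character degrees $\chi(1)$ are always positive integers (hence in $\mathbb{N}$), any character value that is \emph{not} in $\mathbb{N}$ cannot lie in $\cd(G)$; therefore every element of $\ncv(G)$ belongs to $\cv(G)\setminus \cd(G)=\cdc(G)$.

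Given this inclusion, I would simply observe the chain of inequalities
\[
|\ncv(G)|\;\leqslant\; |\cdc(G)| \;\leqslant\; 3.
\]
Combined with the hypothesis that $G$ has no composition factor isomorphic to $A_5$ or $A_6$, the assumptions of Theorem~\ref{3negativevalues} are satisfied. Applying that theorem yields the solvability of $G$ directly, with no further case analysis needed.

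There is no real obstacle here; all the substantive work has been done in Theorem~\ref{uniqueminimalnormal} and Theorem~\ref{3negativevalues}, where the reduction to a unique non-abelian minimal normal subgroup and the search for a rational-valued extendible character with sufficiently many negative values (via Lemma~\ref{rationalextension}, Theorem~\ref{extendiblecharacters}, and Proposition~\ref{Alternating4charactervalues}) carried out the bulk of the analysis. The corollary merely records that the bound one really uses in those arguments, namely the number of \emph{irrational/negative} character values, is controlled from above by $|\cdc(G)|$, so the hypothesis of Theorem~B is at least as strong as the hypothesis of Theorem~\ref{3negativevalues}.
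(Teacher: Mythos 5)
Your proposal is correct and matches the paper's own treatment: the corollary is stated there as following immediately from Theorem~\ref{3negativevalues}, precisely via the inclusion $\ncv(G)\subseteq\cdc(G)$ noted when $\ncv(G)$ is defined. Your only addition is to spell out why that inclusion holds (character degrees are positive integers), which is a harmless and accurate elaboration.
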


\begin{theorem}\label{NforS5}
Let $ G $ be a finite group with a composition factor isomorphic to $ S\in \{A_{5}, A_{6}\} $.  If $ |\cdc(G)|\leqslant 3 $, then there exists a normal subgroup $ N $ of $ G $ such that either
\begin{itemize}
\item[(a)] $ G/N\cong S_{5} $ and $ |\cd(G)|\geqslant 5 $, or
\item[(b)] $ |G/M|\leqslant 16 $ and $ G/M $ is an elementary abelian $ 2 $-group and $ M/N\cong A_{6}\times A_{6} $ for some normal subgroup $ M $ of $ G $.
\end{itemize}
\end{theorem}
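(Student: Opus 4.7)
The plan is to reduce to a quotient $G/N$ that fits the hypothesis of Theorem~\ref{uniqueminimalnormal}, then inspect the two exceptional cases its conclusion isolates.

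First I would select $N\lhd G$ maximal among normal subgroups for which $G/N$ still has a composition factor isomorphic to $A_{5}$ or $A_{6}$ (possibly $N=1$). Let $L/N$ be any minimal normal subgroup of $G/N$. By the maximality of $N$, every proper quotient $G/M$ with $N\lneq M$ has no composition factor in $\{A_{5},A_{6}\}$. Since the composition factors of $G/N$ are the union of those of $L/N$ and of $G/L$, the distinguished factor must lie in $L/N$; in particular $L/N$ cannot be abelian, so $L/N\cong T^{k}$ with $T\in\{A_{5},A_{6}\}$. If a second minimal normal subgroup $L'/N\ne L/N$ existed, then $L/N$ would embed into $(G/N)/(L'/N)=G/L'$, whose composition factors avoid $\{A_{5},A_{6}\}$---a contradiction. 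Hence $L/N$ is the \emph{unique} minimal normal subgroup of $G/N$.

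Next I would apply Theorem~\ref{uniqueminimalnormal} to $G/N$, invoking the monotonicity $|\ncv(G/N)|\leq |\ncv(G)|\leq |\cdc(G)|\leq 3$. The only possibilities it leaves are: (i) $G/N\cong S_{5}$, or (ii) $L/N\cong A_{6}\times A_{6}$ with $G/L$ an elementary abelian $2$-group.

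In case (i) I retain the same $N$, obtaining conclusion~(a). To extract $|\cd(G)|\geq 5$, I would note $\ncv(S_{5})=\{0,-1,-2\}$; inflation then gives $\ncv(G)\supseteq\{0,-1,-2\}$, and the bound $|\ncv(G)|\leq 3$ forces equality, which in turn forces $\cdc(G)=\{0,-1,-2\}$. Consequently every positive integer in $\cv(G)\supseteq \cv(S_{5})$ must lie in $\cd(G)$, so $\cd(G)\supseteq\{1,2,4,5,6\}$. In case (ii) I would set $M=L$, so $M/N\cong A_{6}\times A_{6}$. Uniqueness of $L/N$ combined with $Z(L/N)=1$ forces $C_{G/N}(L/N)=1$, hence $G/L\hookrightarrow \Out(L/N)\cong \Out(A_{6})\wr C_{2}\cong (C_{2}\times C_{2})\wr C_{2}$. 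A short calculation in this order-$32$ group shows that the largest elementary abelian $2$-subgroup is the base $(C_{2}\times C_{2})^{2}$ of order $16$: any element that swaps the two coordinates squares to a diagonal element and commutes only with diagonal base elements, giving at most $8$. Thus $|G/M|\leq 16$, which is conclusion~(b).

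The main obstacle is the initial reduction, namely verifying that the chosen $N$ produces a quotient whose \emph{unique} non-abelian minimal normal subgroup has socle type in $\{A_{5},A_{6}\}$, so that Theorem~\ref{uniqueminimalnormal} applies at all. After that, both exceptional cases are dispatched quickly, with the key ingredient in (a) being that $\cdc(G)$ is already saturated by the three values inherited from the $S_{5}$-quotient, and the bound in (b) reducing to an elementary outer-automorphism count for $A_{6}\wr 2$.
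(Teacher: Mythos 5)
Your proposal is correct and follows essentially the same route as the paper: pass to a quotient with a unique non-abelian minimal normal subgroup (the paper takes $N$ maximal with $N\cap N_{1}=1$ for a fixed minimal normal $N_{1}\cong S^{k}$, you take $N$ maximal preserving the $A_5$/$A_6$ composition factor, to the same effect) and then invoke Theorem~\ref{uniqueminimalnormal}. Your treatment of the two residual claims --- deducing $|\cd(G)|\geqslant 5$ from $\cdc(G)=\{0,-1,-2\}$ and $2\in\cv(S_5)$, and bounding $|G/M|\leqslant 16$ via the largest elementary abelian subgroup of $\Out(A_6)\wr C_2$ --- is a more explicit version of what the paper asserts tersely, and is sound.
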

\begin{proof}
Suppose $ G $ has a composition factor isomorphic to $ S\in \{A_{5}, A_{6}\} $. We may assume that $ G $ has a minimal normal subgroup $ N_{1} $ which is a direct product of copies of $ S $. If there is another minimal normal subgroup of $ G $, then let $ N $ be a maximal normal subgroup of $ G $ with respect to $ N\cap N_{1}=1 $. Then $ G/N $ has a unique minimal normal subgroup of $ G $. Using Theorem \ref{uniqueminimalnormal}, the result then follows. If $ \cd(G)|=4 $, then $ 2\in \cdc(G) $ and so $ |\cdc(G)|> 3 $.
\end{proof}

Note that in Theorem \ref{NforS5}(a), $ 2\in \cdc(G/N) $ but however it is not clear if $ 2\in \cdc(G) $ for any $ N $. 
We are now ready to prove Theorem A which we restate below:

\begin{theorem} If $ |\cv(\chi)|\leqslant 4 $ for all non-linear $ \chi \in \Irr(G) $ for a finite group $ G $, then $ G $ is solvable.
\end{theorem}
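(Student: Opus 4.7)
The plan is to take a minimal counterexample $G$ and exhibit a non-linear $\chi \in \Irr(G)$ with $|\cv(\chi)|\geqslant 5$. First I would observe that inflation of characters from a quotient $G/N$ to $G$ preserves character values, so the hypothesis $|\cv(\chi)|\leqslant 4$ on non-linear characters is inherited by every proper quotient. By minimality, every proper quotient of $G$ is solvable, hence $G$ has a unique minimal normal subgroup $N$ (otherwise $G$ embeds in a direct product of two solvable quotients). Since $G/N$ solvable forces $G$ solvable when $N$ is abelian, $N$ must be non-abelian, and so $N\cong S_{1}\times\cdots\times S_{k}$ with each $S_{i}\cong S$ a non-abelian finite simple group.

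The second step is to manufacture a character of $G$ with many values. By Lemma \ref{rationalextension} there is a non-principal $\theta\in\Irr(S)$ extendible to a rational-valued character of $\Aut(S)$. Since $G/\Centralizer_{G}(N)$ embeds in $\Aut(N)=\Aut(S)\wr \Sym_{k}$ and the character $\theta^{\otimes k}$ is invariant under $\Aut(S)\wr \Sym_{k}$, Clifford theory combined with rational extendibility allows one to produce $\chi\in\Irr(G)$ with $\chi_{N}=\theta^{\otimes k}$, in the spirit of Theorem \ref{uniqueminimalnormal}. By Burnside's theorem and Lemma \ref{lem}(d), the values of $\theta$ include $\theta(1)$, $0$, a negative rational $a$, and a fourth value $b$, with $b\neq 0$, $b\neq a$, and $|a|,|b|<\theta(1)$.

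Third, when $k\geqslant 2$, evaluating $\chi$ on suitable tuples in $N$ produces the five values $\theta(1)^{k}$, $0$, $a\,\theta(1)^{k-1}$, $a^{2}\theta(1)^{k-2}$ and $b\,\theta(1)^{k-1}$; their pairwise distinctness is forced by $0<|a|,|b|<\theta(1)$ together with $a\neq b$, contradicting the hypothesis. Thus $k=1$ and $G$ is almost simple with socle $S$. Here I would split into two subcases. If some non-linear $\chi'\in\Irr(G)$ is irrational, then $\cv(\chi')\supseteq\{\chi'(1),0,\sigma,\overline{\sigma}\}$ by Burnside's theorem and Lemmas \ref{lem}(c),(d),(f), and the bound $|\cv(\chi')|\leqslant 4$ forces equality, which can be ruled out in each almost simple group via a further Galois conjugate or a second rationally extendible character (Theorem \ref{extendiblecharacters}). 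If instead every non-linear $\chi'\in\Irr(G)$ is rational, then by the fact cited in the introduction $G$ is a rational group, and Theorem \ref{rationalnonsolvable} restricts $S$ to the explicit list $\{A_{n}\ (n\geqslant 5),\,\PSL_{3}(4),\,\PSU_{4}(3),\,\PSp_{4}(3),\,\mathrm{Sp}_{6}(2),\,\mathrm{O}_{8}^{+}(2)'\}$. For large alternating socles one exhibits a character with five values as in Proposition \ref{Alternating4charactervalues} and Theorem \ref{extendiblecharacters}, and the remaining sporadic-type possibilities, together with the small cases $A_{5}$, $A_{6}$ that were excluded from Theorem B, are handled by direct inspection of the \Atlas{} and \GAP{} character tables.

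The main obstacle will be the almost simple case: producing a fifth character value when the restriction $\chi_{S}=\theta$ already uses its four-value budget. The irrational subcase is particularly delicate, since Lemma \ref{lem}(f) only guarantees two distinct Galois conjugates, so one must argue either that the outer classes of $G/S$ force a new value, or that the Galois orbit is longer than $\{\sigma,\overline{\sigma}\}$. The cases $A_{5}$ and $A_{6}$ — where Theorem B failed — have to be treated by hand, using that $A_{5}$ already has a character with five values and that certain rational extensions in $\Aut(A_{6})$ exhibit an extra conjugacy class producing the fifth value.
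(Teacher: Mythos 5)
Your overall skeleton matches the paper's: minimal counterexample, reduction to a unique non-abelian minimal normal subgroup $N=S_1\times\cdots\times S_k$, use of Lemma~\ref{rationalextension} to produce a rational-valued $\chi$ with $\chi_N=\theta^{\otimes k}$, elimination of $k\geqslant 2$ by exhibiting five values, and then the rational/irrational dichotomy for the almost simple case with Theorem~\ref{rationalnonsolvable}, the \Atlas{}/\GAP{} checks, and Proposition~\ref{Alternating4charactervalues} for large alternating socles. However, there is a genuine gap exactly where you yourself locate the ``main obstacle'': the almost simple case with an irrational non-linear character. Your plan there --- force a fifth value via outer classes, a longer Galois orbit, or a second extendible character, ``ruled out in each almost simple group'' --- is not a proof and does not even reduce to a finite check, because in this subcase $G$ is \emph{not} known to be a rational group, so the Feit--Seitz list of Theorem~\ref{rationalnonsolvable} is unavailable and you would be facing all almost simple groups with irrational character values. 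The paper closes this case with a completely different idea: by \cite[Proposition 7]{BCZ95}, a group having a non-linear irreducible character with exactly four values, not all rational, has its (unique) minimal normal subgroup equal to a union of three $G$-conjugacy classes, and Lemma~\ref{3conjugacyclasses} (Shahryari--Shahabi \cite{SS98}) then forces that subgroup to be solvable --- contradicting non-abelian simplicity of the socle. Without this input (or a substitute for it), your argument does not go through; note also that this same mechanism is what disposes of $A_5$ and $A_6$, so no separate hand treatment of those socles is needed.

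Two smaller points. First, your list of five values for $k\geqslant 2$, namely $\theta(1)^k,\,0,\,a\theta(1)^{k-1},\,a^{2}\theta(1)^{k-2},\,b\theta(1)^{k-1}$, is not automatically pairwise distinct: $a^{2}\theta(1)^{k-2}=b\theta(1)^{k-1}$ exactly when $a^{2}=b\theta(1)$, which is compatible with $0<|a|,|b|<\theta(1)$ and $a\neq b$. The paper's choice $ab^{k-1}$ in place of $a^{2}\theta(1)^{k-2}$ avoids this coincidence. Second, in the all-rational subcase you should cite \cite[Theorem 3.10]{DIM10} (as the paper does) to pass from ``all non-linear irreducible characters rational'' to ``$G$ is a rational group''; this is the step that legitimises invoking Theorem~\ref{rationalnonsolvable}.
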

\begin{proof}
Suppose that the theorem is not true and let $ G $ be a minimal counterexample. If $ G $ has two minimal normal subgroups $ N_{1} $ and $ N_{2} $, then for $ G/N_{1} $ and $ G/N_{2} $, the theorem holds, that is, $ G/N_{1} $ and $ G/N_{2} $ are solvable. This means that $ G $ is solvable, a contradiction. 

We may assume that $ G $ has a unique non-abelian minimal normal subgroup $ N $. Suppose that $ N=S_{1}\times S_{2} \times \cdots \times S_{k} $, where $ S_{i}\cong S $ for some simple group $ S $. By Lemma \ref{rationalextension}, there exists $ \theta \in \Irr(S) $ that is extendible to a rational-valued of $ \Aut(S) $. Then there exists $ \chi \in \Irr(G) $ that is rational-valued such that $ \chi_{N} =\theta _{1}\times \theta _{2}\times \cdots \times \theta _{k} $. Note that $ \chi_{N} $ is faithful in $ N $ and hence $ |\cv(\chi_{S_{i}})|\geqslant 4 $ for all $ i $ by \cite[Lemma 2]{BCZ95}. 

We now show that $ N $ is a simple group. Assume the contrary, that is, $ k\geqslant 2 $. Suppose these values are $ \theta_{1}(1), 0, a, b $, where $ |a|<\chi(1) $ and $ |b|<\chi(1) $. Let $ g_{1}\in S_{1} $, $ h_{i}\in S_{i} $ be such that $ \theta_{1}(g_{1})=a $ and $ \theta_{i}(h_{i})=b $ for $ i=2, \dots, k $. Hence $ \chi(g_{i}\times 1 \times \cdots \times 1)=a\theta(1)^{k-1}  $,  $ \chi(h_{i}\times 1 \times \cdots \times 1)=b\theta(1)^{k-1}  $ and $ \chi(g_{i}\times h_{i} \times \cdots \times h_{i})=ab^{k-1}  $, $ \chi(1)=\theta(1)^{k} $ and $ 0 $ are five distinct values of $ \chi $. Hence we may assume that $ N $ is a simple group and so $ G $ is almost simple. 

Suppose that every non-linear of irreducible character of $ G $ is rational. Then by \cite[Theorem 3.10]{DIM10}, $ G $ is a rational group.  Using the \Atlas{} \cite{CCNPW85} and \GAP{} \cite{GAP} for $ \mathrm{Sp}_{6}(2)$, $\mathrm{O}_{8}^{+}(2)' $ and $ S_{n} $ with $ n\leqslant 14 $ and then Proposition \ref{Alternating4charactervalues} for $ n\geqslant 15 $, we obtain our result.

Hence we may assume that $ G $ has a non-linear irreducible character which is not rational valued. Using \cite[Proposition 7]{BCZ95}, we have that $ N $ is a union of three conjugacy classes of $ G $. Applying Lemma \ref{3conjugacyclasses}, it follows that $ G $ is solvable, our final contradiction.
\end{proof}

\begin{proposition}\label{4charactervaluesexplicit}
Suppose that $ G $ is a non-abelian finite group such $ |\cv(\chi)|= 4 $ for all non-linear $ \chi \in \Irr(G) $. Then one of the following holds:
\begin{itemize}
\item[(a)] $ G $ is a rational group.
\item[(b)] $ G/N $ is a $ 2 $-group for some normal subgroup $ N $.
\item[(c)] $ G/N $ is a Frobenius group with cyclic complement for some normal subgroup $ N $.
\item[(d)] $ G $ has a unique minimal normal subgroup which is an elementary abelian $ p $-group for some odd prime $ p $.
\end{itemize}
\end{proposition}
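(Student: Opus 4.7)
The plan is to first apply Theorem A to reduce to the solvable case, and then split on whether $G$ is rational. If $G$ is rational we are in case (a). Otherwise, we mimic the strategy already employed in the proof of Theorem A to extract structural information from a non-rational character.

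\textbf{Step 1.} Since $|\cv(\chi)| = 4 \leqslant 4$ for every non-linear $\chi \in \Irr(G)$, Theorem A applies and forces $G$ to be solvable. If every non-linear irreducible character of $G$ is rational-valued, then \cite[Theorem 3.10]{DIM10} yields that $G$ is a rational group, placing us in case (a). Assume therefore that some non-linear $\chi \in \Irr(G)$ takes an irrational value. Lemma \ref{lem}(f) then forces $\chi$ to admit two distinct irrational values, which together with $\chi(1)$ account for three of the four elements of $\cv(\chi)$; the fourth value is necessarily rational.

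\textbf{Step 2.} Exactly as in the proof of Theorem A, apply \cite[Proposition 7]{BCZ95} to the non-rational character $\chi$ to obtain a normal subgroup $N$ of $G$ which is a union of three $G$-conjugacy classes. Lemma \ref{3conjugacyclasses} then leaves three possibilities for $N$: an elementary abelian $p$-group of odd order, a metabelian $p$-group, or a Frobenius group with cyclic complement of prime order.

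\textbf{Step 3.} Translate these three structural types for $N$ into conclusions (b), (c), (d). If $N$ is elementary abelian of odd order, then the three-class decomposition combined with the solvability of $G$ pins $N$ down as the unique minimal normal subgroup of $G$, giving case (d); any competing minimal normal subgroup would violate either the three-class rigidity on $N$ or the placement of the irrational values of $\chi$. If $N$ is a metabelian $p$-group with $p$ odd, the same reasoning applied to a minimal normal subgroup of $G$ inside $N$ again yields case (d). If instead $p = 2$, then the irrational values of $\chi$ are realized modulo a $2$-power quotient; taking $M$ to be the suitable normal subgroup of $G$ (for example $\ker \chi$, or a refinement thereof), one obtains $G/M$ as a $2$-group, placing us in case (b). Finally, if $N = K \rtimes C$ is Frobenius with $C$ cyclic of prime order, then $K$ is characteristic in $N$ and hence normal in $G$; the three-class decomposition then lifts to make $G/K$ itself a Frobenius group with cyclic complement, giving case (c).

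The main obstacle is the final step: in the metabelian $2$-group subcase and in the Frobenius subcase, extracting a normal subgroup of $G$ whose quotient has the stipulated form from the information carried by $N$ alone requires a careful analysis of how the irrational values of $\chi$ propagate through quotients of $G$ and of how $G$ acts on $N$. The elementary-abelian odd-order subcase, by contrast, is comparatively direct.
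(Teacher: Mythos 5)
There is a genuine gap, and it sits in Step 1. From the hypothesis that every non-linear irreducible character of $G$ is rational-valued you conclude via \cite[Theorem 3.10]{DIM10} that $G$ is a rational group, i.e.\ that you land in case (a) alone. That implication is false in general: the linear characters can still be irrational. For instance, the Frobenius group $C_{5}\rtimes C_{4}$ with cyclic complement of order $4$ has a single non-linear irreducible character, which is rational-valued (values $4$, $-1$, $0$), yet the group is not rational since its faithful linear characters take the value $i$. (The paper's Theorem~A does use Theorem 3.10 of \cite{DIM10}, but only in the almost simple setting, where it is legitimate.) The correct citation here is \cite[Theorem 3.12]{DIM10}, which classifies groups all of whose non-linear irreducible characters are rational-valued and delivers precisely the trichotomy (a)--(c). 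In other words, conclusions (b) and (c) belong to the \emph{rational-character} branch of the argument, not to the irrational branch where you try to manufacture them.

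This misallocation is what forces your Step 3 into the difficulties you yourself flag. Once some non-linear $\chi$ takes an irrational value, \cite[Proposition 7]{BCZ95} gives more than you use: it produces a \emph{unique minimal} normal subgroup $N$ that is a union of three $G$-classes. Minimality makes $N$ characteristically simple, so of the three options in Lemma~\ref{3conjugacyclasses} only the elementary abelian one of odd order survives, and you are immediately in case (d). There is therefore no metabelian $2$-group subcase and no Frobenius subcase to analyse in this branch, and no need to "propagate irrational values through quotients" -- the heavy lifting you defer to the end of Step 3 is not a hard step that remains to be done; it is a detour created by putting (b) and (c) in the wrong branch. The appeal to Theorem~A for solvability is harmless but also unnecessary for the paper's argument.
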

\begin{proof}
If all the non-linear irreducible characters of $ G $ are rational valued, then we have (a) - (c) by \cite[Theorem 3.12]{DIM10}. Suppose that there is non-linear $ \chi \in \Irr(G) $ that takes irrational values. Then $ G $ has a unique minimal normal subgroup $ N $ which is a union of three conjugacy classes by \cite[Proposition 7]{BCZ95}. Using Lemma \ref{3conjugacyclasses}, it follows that $ N $ is an elementary abelian $ p $-group for some odd prime $ p $.
\end{proof}

\subsection{Examples} We list some examples of groups in Proposition \ref{4charactervaluesexplicit}. A classification seems far fetched. 

\begin{itemize}
\item[(a)] $ G\cong S_{4} $ is an example of a rational group with $ |\cv(\chi)|\leqslant 4 $ for all $ \chi \in \Irr(G) $. We could not find a rational group such that $ |\cv(\chi)|= 4 $ for all non-linear $ \chi \in \Irr(G) $.
\item[(b)] We list examples of groups in Proposition \ref{4charactervaluesexplicit}(b):
\begin{itemize}
\item[(i)] Let $G=\langle \rho,\sigma| \rho^5=\sigma^2,\sigma^{-1}\rho\sigma=\rho^4\rangle=D_{10}$, the dihedral group of order $10$. Then the derived subgroup is $G'=\langle\rho^2\rangle\cong C_5$ and $G/G'\cong C_2$. This is SmallGroup(10,1).
\item[(ii)] $G\cong C_{17}\rtimes C_8$ has a normal subgroup $N\cong D_{34}.C_2$ and $G/N\cong C_2$. This is SmallGroup(136, 12).
\end{itemize}
\item[(c)] The following are examples of groups in Proposition \ref{4charactervaluesexplicit}(c):
\begin{itemize}
\item[(i)] Our first example is the one given in (b)(i).
\item[(ii)] The group $C_5\rtimes D_{10}$, where $D_{10}$ is the dihedral group of order $10$, has $12$ irreducible characters of degree $2$ and each takes on four values. Furthermore, $G$ has nine conjugacy classes of normal subgroups $N$, six of which have quotients $G/N\cong D_{10}.$ It is well known that the dihedral groups of order $2n$ with $n$ odd are Frobenius. Here the kernel is $C_5$ and the complement is $C_2$. This is SmallGroup(50,4). 
\item[(iii)] Let $G\cong C_5^2\rtimes D_{10}$. Then $G$ has a normal subgroup $N$ such that $G/N\cong D_{10}.$ The group $D_{10}$ is Frobenius with complement $C_2$. This is SmallGroup(250,14). 
\end{itemize}
\item[(d)] The following are examples of groups in Proposition \ref{4charactervaluesexplicit}(d). We found the most groups in this case:
\begin{itemize}
\item[(i)] Set $G=C_7\rtimes C_3$. Then the two characters of degree $3$ each assume four values. This is SmallGroup(21,1).
\item[(ii)] Let $G=\rm{He}_3$, the Heisenberg group of order $27$. The group $G$ has a socle $N\cong C_3$. This is SmallGroup(27,3).
\item[(iii)] The group $G=C_9\rtimes C_3$  has two non-linear characters each of degree $3$ and both taking on four values. Note that $G$ has a unique minimal normal subgroup of order $3$ which is its socle. This is SmallGroup(27,4).
\item[(iv)] Set $G=C_3\rtimes S_3.C_2$. The two non-linear characters of $ G $ of degree $4$ assume four  values each and $G$ has a unique minimal normal subgroup $N\cong C_3^2$. This is SmallGroup(36,9).
\item[(v)] Let $G=C_{11}\rtimes C_5$. This is SmallGroup(55,1).
\item[(vi)] Set $G=D_{26}:C_3$. This is SmallGroup(78,1)).
\item[(vii)] Let $G\cong C_2^4\rtimes C_5$. Then $ G $  has socle $C_2^4$ equal to the unique minimimal normal subgroup and all the irreducible non-linear characters assume four values. This is SmallGroup(80,49).
\item[(viii)] The following groups of order $81$ all have unique minimal normal subgroups $N\cong C_3^2$.
    \begin{itemize}
    \item $G\cong C_3^2\rtimes C_9$ (SmallGroup(81,3));
    \item $G\cong C_9\rtimes C_9$ (SmallGroup(81,4));
    \item $G\cong C_3\times \rm{He}_3$ (SmallGroup(81, 12)); and
      \item $G\cong C_3\times C_9\rtimes C_3$ (Smallgroup(81, 13)).
     \end{itemize}
\item[(ix)] The group $G\cong C_7^2\rtimes C_3$  has a unique minimal normal subgroup $N\cong C_7^2$. This is SmallGroup(147,4).
\end{itemize}

\end{itemize}

\section{Problems}
We end the article by stating some problems.
\begin{problem}
Let $ G $ be a solvable group. Is it true that there exist a linear function $ f $ such that 
\begin{center}
$ |\cd(G)|\leqslant f(|\cdc(G)|) $? 
\end{center}

For non-abelian nilpotent groups, we remark that it is easy to show
\begin{center}
 $ |\cd(G)|\leqslant |\cdc(G)| + 1 $
 \end{center}
  using Lemma \ref{nilpotentvalues}(a). For solvable groups, based on our results, we have that 
\begin{itemize}
\item[(a)] If $ |\cd(G)|=1 $, then $ |\cdc(G)|\geqslant 0 $;
\item[(b)] If $ 2 \leqslant|\cd(G)| \leqslant 3 $, then $ |\cdc(G)|\geqslant 2 $;
\item[(c)] If $ |\cd(G)|=4 $, then $ |\cdc(G)|\geqslant 3 $.
\end{itemize}
\end{problem}

The following problems are related in one way or the other.
\begin{problem}
Let $ G $ be a finite group, $ N $ be a normal subgroup of $ G $.
\begin{itemize}
\item[(a)] Is it true that if $ |\cdc(G)|\leqslant 3 $, then $ G $ is solvable?
\item[(b)]  Is it true that $ |\cdc(G/N)|\leqslant |\cdc(G)| $? 
\item[(c)] Describe the structure of $ N $ in the case of Theorem \ref{NforS5}(b) if it exists.
\item[(d)] Classify all non-solvable groups $ G $ such that $ |\cdc(G)|=4 $. Note that the case when $ |\cd(G)|=4 $ was classified in \cite{Mad22}.
\end{itemize}
\end{problem}

\begin{problem}
Classify non-solvable groups such that $ |\cv(\chi)|\leqslant 5 $ for all $ \chi \in \Irr(G) $.
\end{problem}

\begin{problem}
Classify finite groups $ G $ such that $ |\cdc(G)|=2 $.
\end{problem}
We shall consider a counterexample to the fact that the groups in Theorem C are the only ones with the property that $ |\cdc(G)|=2 $.
\begin{remark}
Suppose that $ G $ is a finite group such that $ |\cdc(G)|=2 $. If $ \dl(G)= 4 $, then $ G $ there exists a normal abelian $ 3 $-subgroup $ N $ of $ G $ such that $ G/N\cong S_{4} $ and $ \cd(G)\subseteq \{1, 2, 3, 4, 6, 8, 12\} $. In particular, $ G $ contains an element of order $ 6 $ and there exists $ \chi \in \Irr(G) $ such that either $ \chi(1)=6 $ or $ \chi(1)=12 $.
\end{remark}
\begin{proof}
Consider the abelian subgroup $ G'''\not= 1 $. Then $ \dl(G/G''')=2 $ and $ |\cdc(G/G''')=2 $. It follows that $ G/G'''\cong S_{4} $ by Theorem \ref{derivedlength3}. Using \cite[Corollary 2]{Row}, we have that $ G''' $ is a $ \{2, 3, 5 \} $-group. Let $ T $ be the normal Hall $ 2' $-subgroup of $ G''' $. Then $ \dl(G/T) = 4 $ and $ \chi(1) $ divides $ |G:G'''| $ for all $ \chi \in \Irr(G/T) $ by Ito's theorem. Hence $ \cd(G/T)\subseteq \{1, 2, 3, 4, 6, 8, 12, 24\} $. Let $ \chi(1)\in \{ 6, 12, 24\} $ for some $ \chi \in \Irr(G/T) $. Note that $ \chi $ vanishes on all elements of $ 3 $-singular elements. Consider $ 2 $-elements of $ G/T $. Then $ |\chi(g)|\not= 1  $ by Lemma \ref{prootofunityelement}. Hence $ -1\notin \cv(\chi) $, a contradiction of Lemma \ref{lem}(d). Since $ |\cd(G/T)|\geqslant 5 $ by Theorem \ref{4characterdegrees}, it follows that $ \cd(G/T)=\{ 1, 2, 3, 4, 8\} $. Note that the character degree graph of $ G/T $ is disconnected. Using \cite[Theorem 1]{EA21}, we have that $ G/T\cong S_{4}\times C_{2}^{*} $, a contradiction. This means that $ G''' $ is a $ \{3, 5\} $-group and so $ G $ has a Sylow $ 2 $-subgroup isomorphic to $ D_{8} $. Hence $ G''' $ is a $ 3 $-subgroup by \cite[Theorem 1]{DS21}. 

Since $ G $ satisfies the conditions of $ G/T $ we have that $ \cd(G)\subseteq \{1, 2, 3, 4, 6, 8, 12, 24\} $. Let $ \chi(1)=24 $ for some $ \chi \in \Irr(G) $. Then all $ \chi $ vanishes on all $ 2 $-singular elements. Suppose that $ g $ is a $ 3 $-element. Then $ |\chi(g)|\not= 1  $ by Lemma \ref{prootofunityelement}. Hence $ -1\notin \cv(\chi) $, a contradiction of Lemma \ref{lem}(d). Hence $ 24\notin \cd(G) $. 

If $ G $ has no character of degree $ 6 $ or $ 12 $, then the character degree graph is disconnected and by the argument in the first paragraph we reach a contradiction. Suppose that $ G $ does not have elements of order $ 6 $. It is sufficient to show that $ G $ has no character of degree $ 6 $ or $ 12 $. Suppose there is a $ \chi\in \Irr(G) $ such that $ \chi(1)\in \{6,12\} $. If $ g $ is a $ 2 $-element or a $ 3 $-element, then by Lemma \ref{prootofunityelement}, $ -1\notin \cv(\chi) $, our final contradiction.
\end{proof}

\section*{Acknowledgements}
The first author was supported by the National Research Foundation (NRF) of South Africa under Grant Number 150857. All opinions, findings and conclusions or recommendations expressed in this publication are those of the authors and the NRF does not accept any liability in regard thereto. Part of this work was done whilst the first author was visiting University of Zambia. He thanks the Department of Mathematics and Statistics for their hospitality.

 The work of the third author is supported by the Mathematical Center in Akademgorodok under the agreement No. 075-15-2022-281 with the Ministry of Science and Higher Education of the Russian Federation.

% ------------------------------------------------------------------------
\end{document}